\theoremstyle{definition} \newtheorem{theorem}{Theorem}[section]
\theoremstyle{definition} \newtheorem{definition}[theorem]{Definition}
\theoremstyle{definition} \newtheorem{lemma}[theorem]{Lemma}
\theoremstyle{definition} 
\theoremstyle{definition} \newtheorem{corollary}[theorem]{Corollary}
\theoremstyle{remark} 
\theoremstyle{remark} \newtheorem{remark}[theorem]{Remark}
\theoremstyle{remark} \newtheorem{example}[theorem]{Example}
\theoremstyle{remark} \newtheorem{construction}[theorem]{Construction}
\newcommand{\Zpr}{\mathbb{Z}/p^r}
\newcommand{\Zp}{\mathbb{Z}/p}
\newcommand{\PhiPi}{\Phi_{\nu}^\pi}
\newcommand{\PhiK}{\Phi_{\nu}^K}
\newcommand{\de}{\overline{e}}
\newcommand{\qmax}{q_{\mathrm{max}}}
\newcommand{\qmin}{q_{\mathrm{min}}}
\newcommand{\Jdec}{\Gamma}
\title{$p$-hyperbolicity of homotopy groups via $K$-theory}
\author{Guy Boyde}
\begin{document}

\maketitle

\begin{abstract} We show that $S^n \vee S^m$ is $\Zpr$-hyperbolic for all primes $p$ and all $r \in \mathbb{Z}^+$, provided $n,m \geq 2$, and consequently that various spaces containing $S^n \vee S^m$ as a $p$-local retract are $\Zpr$-hyperbolic. We then give a $K$-theory criterion for a suspension $\Sigma X$ to be $p$-hyperbolic, and use it to deduce that the suspension of a complex Grassmannian $\Sigma Gr_{k,n}$ is $p$-hyperbolic for all odd primes $p$ when $n \geq 3$ and $0<k<n$. We obtain similar results for some related spaces.
    
\end{abstract}

\section{Introduction}

A space $X$ is called \emph{rationally elliptic} if $\pi_*(X) \otimes \mathbb{Q}$ is finite dimensional, and \emph{rationally hyperbolic} if the dimension of $\bigoplus_{i \leq m} \pi_i(X) \otimes \mathbb{Q}$ grows exponentially in $m$. It was proved in \cite[Chapter 33]{RHT} that simply connected $CW$-complexes with rational homology of finite type and finite rational category are either rationally elliptic or rationally hyperbolic. In order to study the $p$-torsion analogue of this dichotomy, Huang and Wu \cite{HuangWu} introduced the definitions of $\Zpr$- and $p$-hyperbolicity.

For $p$ prime, by a \emph{$p$-torsion summand} in an abelian group $A$, we mean a direct summand isomorphic to $\Zpr$ for some $r \geq 1$.


\begin{definition} \label{pdef} Let $X$ be a space, and let $p$ be a prime. We say that $X$ is $p$-\textit{hyperbolic} if the number of $p$-torsion summands in $\pi_*(X)$ grows exponentially, in the sense that $$\liminf_m\frac{\ln(T_m)}{m} > 0,$$ where $T_m$ is the number of $p$-torsion summands in $\bigoplus_{i \leq m} \pi_i(X)$.
\end{definition}

The above definition counts $\Zpr$-summands for all values of $r$. It is also possible to consider only a single $r$, and by doing so we obtain the definition of $\Zpr$-hyperbolicity.

\begin{definition} \label{zprdef} Let $X$ be a space, let $p$ be a prime, and fix $r \in \mathbb{Z}^+$. We say that $X$ is $\Zpr$-\textit{hyperbolic} if the number of $\Zpr$-summands in $\pi_*(X)$ grows exponentially, in the sense that $$\liminf_m\frac{\ln(t_m)}{m} > 0,$$ where $t_m$ is the number of $\Zpr$-summands in $\bigoplus_{i \leq m} \pi_i(X)$. 
\end{definition}

Note that $\Zpr$-hyperbolicity for any $r$ implies $p$-hyperbolicity. It follows immediately from a result of Henn \cite[Corollary of Theorem 1]{HennGrowth} that the $\liminf$s appearing in the above definitions must be finite if $X$ is a simply connected finite $CW$-complex.

Huang and Wu show that for $n \geq 3$, $r \geq 1$ and $p$ any prime, the Moore space $P^n(p^r)$ is $\Zpr$-hyperbolic and $\mathbb{Z}/p^{r+1}$-hyperbolic, and that $P^n(2)$ is also $\mathbb{Z}/8$-hyperbolic \cite[Theorem 1.6]{HuangWu}. More generally, they give criteria in terms of a functorial loop space decomposition due to Selick and Wu  \cite{SelickWu1,SelickWu2} for a suspension $\Sigma X$ to be $\Zpr$-hyperbolic.

More recently, Zhu and Pan \cite{ZhuPan} use a classification of $(n-1)$-connected $CW$-complexes of dimension at most $n+2$, due to Chang \cite{Chang}, to show that, for $n \geq 4$, such a complex is $\Zp$-hyperbolic, provided that it is not contractible or a sphere after $p$-localization. They also prove hyperbolicity results for several so-called elementary Chang complexes.

This paper studies $p$- and $\Zpr$-hyperbolicity of certain suspensions. Our first result is as follows.

\begin{theorem} \label{sphWedge} Let $q_1, q_2 \geq 1$. Then $S^{q_1+1} \vee S^{q_2+1}$ is $\Zpr$-hyperbolic for all primes $p$ and all $r \in \mathbb{Z}^+$.
\end{theorem}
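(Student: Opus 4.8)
The plan is to combine the Hilton--Milnor theorem with an elementary count of basic products and a uniform supply of $\Zpr$-summands in the homotopy groups of high-dimensional spheres. Since $q_1,q_2\ge1$ the spaces $S^{q_1},S^{q_2}$ are connected and $S^{q_1+1}\vee S^{q_2+1}=\Sigma S^{q_1}\vee\Sigma S^{q_2}$, so the Hilton--Milnor theorem exhibits $\Omega(S^{q_1+1}\vee S^{q_2+1})$ as a weak infinite product of loop spaces of spheres indexed by a Hall basis $\mathcal B$ of the free Lie algebra on two generators $x_1,x_2$: a basic product $w$ containing $a_w$ copies of $x_1$ and $b_w$ copies of $x_2$ contributes the factor $\Omega S^{N_w}$ with $N_w:=a_wq_1+b_wq_2+1$. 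Passing to homotopy groups gives a natural isomorphism $\pi_*(S^{q_1+1}\vee S^{q_2+1})\cong\bigoplus_{w\in\mathcal B}\pi_*(S^{N_w})$, so it suffices to find exponentially many $\Zpr$-summands on the right-hand side lying in degrees $\le m$, as $m\to\infty$.

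Two inputs make this work. First, a count: by the Witt formula the number of $w\in\mathcal B$ of weight $k$ (i.e.\ with $a_w+b_w=k$) equals $\tfrac1k\sum_{d\mid k}\mu(d)2^{k/d}$, which is at least $2^{k-1}/k$ for $k$ large, and every such $w$ satisfies $N_w\le(q_1+q_2)k+1$; hence the number of $w$ with $N_w\le M$ is at least $2^{cM}$ for some constant $c=c(q_1,q_2)>0$ once $M$ is large. Second, a uniform summand: it is classical that for each prime $p$ and each $r\ge1$ there is an integer $d=d(p,r)$ such that the $d$-th stable stem $\pi_d^s$ has a direct summand isomorphic to $\Zpr$. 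One can read this off the image of the $J$-homomorphism, which is a cyclic direct summand of $\pi_d^s$ of explicitly known order; choosing $d$ so that this order has $p$-primary part $\Zpr$ does the job (for $p$ odd one may take $d=2p^{r-1}(p-1)-1$). By Freudenthal suspension, $\pi_{N+d}(S^N)\cong\pi_d^s$ whenever $N\ge d+2$, so every sufficiently large sphere carries a $\Zpr$-summand in degree $N+d=N+O(1)$.

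Assembling these, fix $r$ and put $d=d(p,r)$; for $m$ large, each basic product $w$ with $d+2\le N_w\le m-d$ contributes, via the displayed isomorphism, a $\Zpr$-summand of $\pi_{N_w+d}(S^{q_1+1}\vee S^{q_2+1})$ of degree at most $m$, and distinct basic products give rise to independent such summands. By the count above (applied with $M=m-d$, after discarding the finitely many $w$ with $N_w<d+2$), the number of such $w$, and hence $t_m$, is at least $2^{c'm}$ for $m$ large; therefore $\liminf_m\ln(t_m)/m\ge c'>0$. As $p$ and $r$ were arbitrary, this proves the theorem.

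The only real content is the second input: one genuinely needs $\Zpr$-\emph{summands}, not merely elements of order $p^r$ nor merely $\Zp$-summands, located at bounded distance from the bottom cell and uniformly across the spheres involved. The image of $J$ supplies this for $p$ odd, and for $p=2$ when $r\neq2$; the residual case $p=2,\ r=2$ can be treated either by quoting the known computation of a low stable stem containing a $\mathbb{Z}/4$-summand, or by relaxing ``degree $N+O(1)$'' to ``degree $O(N_w)$'' in the two inputs above, since the spheres $S^{N_w}$ with $N_w\le\varepsilon m$ are still exponentially numerous and one has correspondingly more room to locate a $\mathbb{Z}/4$-summand unstably (e.g.\ via Cohen--Moore--Neisendorfer-type bounds on homotopy exponents). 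Everything else is routine bookkeeping with the Hilton--Milnor decomposition and the Witt formula.
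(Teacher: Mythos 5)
Your proposal is correct and follows essentially the same route as the paper: the Hilton--Milnor decomposition of $\Omega(S^{q_1+1}\vee S^{q_2+1})$ into loop spaces of spheres indexed by basic products, the Witt formula for the exponential count, and the existence of a $\Zpr$-summand in a fixed stable stem supplied by the image of the $J$-homomorphism (with the residual case $p^r=4$ handled, as in the paper, by quoting a known stable stem containing a $\mathbb{Z}/4$-summand, namely $\pi_{34}^S$). The only differences are cosmetic: you use the cruder bound $N_w\le (q_1+q_2)k+1$ where the paper uses $N_w\le k\,\mathrm{max}(q_1,q_2)+1$ to extract the sharper quantitative estimate of Corollary \ref{bound}, and your choice of stem $d=2p^{r-1}(p-1)-1$ for odd $p$ is a valid variant of the paper's $4p^{r-1}(p-1)-1$.
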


Let $p$ be a prime. If a space $X$ contains a wedge of two spheres as a $p$-local retract, then Theorem \ref{sphWedge} implies that $X$ is $\Zpr$-hyperbolic for all $r$. Various spaces have been shown to have such a wedge as a retract - examples of this sort are given in Section \ref{1.3apps}. A summary is as follows:

\begin{itemize}
    \item for $n,k \geq 3$, the configuration space $\mathrm{Conf}_k(\mathbb{R}^n)$ is $\Zpr$-hyperbolic for all $p$ and $r$ (Example \ref{ConfigurationSpaces});
    \item an $(n-1)$-connected $2n$-dimensional manifold $M$, where $H^n(M)$ is of rank at least 3, is $\Zpr$-hyperbolic for all $p$ and $r$ (Example \ref{niceManifolds});
    \item a generalized moment-angle complex on a simplicial complex having two minimal missing faces which are not disjoint is $\Zpr$-hyperbolic for all $p$ and $r$ (Example \ref{polyhedralProducts});
    \item $\Sigma \mathbb{C}P^2$ is $\Zpr$-hyperbolic for all $p \neq 2$ and all $r$, and $\Sigma \mathbb{H}P^2$ is is $\Zpr$-hyperbolic for all $p \neq 2,3$ and all $r$ (Example \ref{CP2andHP2}).
\end{itemize} 

Our other result is as follows.

\begin{theorem} \label{K-detection} Let $p$ be an odd prime, and let $X$ be a path connected space having the $p$-local homotopy type of a finite $CW$-complex. Suppose that there exists a map $$\mu_1 \vee \mu_2 : S^{q_1+1} \vee S^{q_2+1} \to \Sigma X$$
with $q_i \geq 1$, such that the map $$\widetilde{K}^*(\Sigma X) \otimes \Zp \xrightarrow{(\mu_1 \vee \mu_2)^*} \widetilde{K}^*(S^{q_1 +1} \vee S^{q_2 +1}) \otimes \Zp \cong \Zp \oplus \Zp$$
is a surjection. Then $\Sigma X$ is $p$-hyperbolic.
\end{theorem}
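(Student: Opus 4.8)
The plan is to combine Theorem \ref{sphWedge} with Bousfield's $v_1$-periodic homotopy functors $\Phi_1^\pi$ and $\Phi_1^K$ (his $\Phi_\nu^\pi,\Phi_\nu^K$ with $\nu=1$, built from complex $K$-theory at the odd prime $p$). Writing $W:=S^{q_1+1}\vee S^{q_2+1}$, the idea is to use the $K$-theory hypothesis to exhibit $W$ as a retract of $\Sigma X$ after applying $\Phi_1^K$, then to pass to $\Phi_1^\pi$ via Bousfield's comparison theorem, and finally to transport along this retraction the exponential growth of the $v_1$-periodic homotopy of $W$ that already underlies Theorem \ref{sphWedge}.

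First I would unwind the hypothesis at the level of $K$-cohomology. Since $\widetilde K^*(W)$ is torsion-free and $(\mu_1\vee\mu_2)^*\otimes\Zp$ is onto, the cokernel of $(\mu_1\vee\mu_2)^*\colon\widetilde K^*(\Sigma X)\to\widetilde K^*(W)$ is finite of order prime to $p$; hence after $p$-localization $(\mu_1\vee\mu_2)^*$ is a surjection onto the free module $\widetilde K^*(W)_{(p)}$ and so admits a $\mathbb{Z}_{(p)}$-linear section. The substantive point is to choose this section compatibly with the extra structure that $\Phi_1^K$ records (the unstable Adams-operation / $\lambda$-ring structure). Here the simplicity of $\widetilde K^*(W)=\widetilde K^*(S^{q_1+1})\oplus\widetilde K^*(S^{q_2+1})$ is what saves us: on each wedge summand $\psi^k$ acts by a single power of $k$, so that (after suitable completion, and using that the relevant slopes are distinct) a generator of each summand lifts to an operation-compatible element of $\widetilde K^*(\Sigma X)$, i.e. $\Phi_1^K(W)$ is ``free relative to surjections'' in Bousfield's target category. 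Granting this, functoriality of $\Phi_1^K$ turns the section into a retraction of $\Phi_1^K(\mu_1\vee\mu_2)$, so $\Phi_1^K(W)$ is a retract of $\Phi_1^K(\Sigma X)$.

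Next I would invoke Bousfield's comparison theorem: for $W$ and $\Sigma X$, both of which are suspensions of finite complexes (in particular co-$H$-spaces, and in the intended applications have torsion-free $K$-theory), the natural map $\Phi_1^\pi(-)\to\Phi_1^K(-)$ is an equivalence, so $\Phi_1^\pi(W)$ is a retract of $\Phi_1^\pi(\Sigma X)$; equivalently the $v_1$-periodic homotopy $v_1^{-1}\pi_*(W)$ is, compatibly with the $v_1$-action, a retract of $v_1^{-1}\pi_*(\Sigma X)$. Now I would feed in the growth: by the Hilton--Milnor theorem $\Omega W\simeq\prod_w\Omega S^{n_w}$ over a Hall basis of basic products $w$ in two letters of dimensions $q_1,q_2$, with $n_w$ the associated dimension; since $n_w\le\qmax\cdot\operatorname{wt}(w)+1$ and the number of basic products of weight $\le N$ grows exponentially in $N$, there are exponentially many $w$ with $n_w\le m$, and the known computation of $v_1^{-1}\pi_*$ of spheres at odd primes shows each factor $v_1^{-1}\pi_*(S^{n_w})$ is nonzero near the bottom. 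This is exactly the input that, in the proof of Theorem \ref{sphWedge}, is converted --- using that the $v_1$-periodic homotopy of a finite complex is bounded $p$-torsion and periodic of a fixed period, so that each periodic family contributes $p$-torsion summands in $\pi_*$ with linear density --- into an exponential lower bound on the number of $p$-torsion summands of $\bigoplus_{i\le m}\pi_i$. Running this bookkeeping at the level of $\Phi_1^\pi$ and carrying it through the retraction of the preceding step yields $\liminf_m\ln(T_m)/m>0$, i.e. $\Sigma X$ is $p$-hyperbolic in the sense of Definition \ref{pdef}.

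The main obstacle is the operation-compatible splitting in the second paragraph: the $K$-theory surjectivity hypothesis gives a $\mathbb{Z}_{(p)}$-module splitting essentially for free, but making it visible to $\Phi_1^K$ requires controlling the Adams operations, and it is precisely the triviality of these operations on $\widetilde K^*(W)$ (and the freeness of $\widetilde K^*(W)$) that makes this tractable --- the generality of ``arbitrary finite $\Sigma X$'' in the statement, as opposed to torsion-free $K$-theory, must be absorbed here. The remaining ingredients --- Bousfield's comparison theorem, the Hilton--Milnor decomposition with its exponential count of basic products, and the passage from $v_1$-periodic growth to $p$-hyperbolicity --- are standard or are reused directly from the proof of Theorem \ref{sphWedge}.
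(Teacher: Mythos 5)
Your argument breaks down at the Bousfield comparison step, and this is not a repairable technicality. The equivalence $\Phi_1^\pi(-)\simeq\Phi_1^K(-)$ is a theorem about spheres and a restricted class of $H$-spaces; it fails for wedges of spheres and for general suspensions of finite complexes. Concretely, by Hilton--Milnor $\Omega(S^{q_1+1}\vee S^{q_2+1})\simeq\prod_w\Omega S^{n_w}$ over the infinitely many basic products $w$, so $v_1^{-1}\pi_i(S^{q_1+1}\vee S^{q_2+1})\cong\bigoplus_w v_1^{-1}\pi_i(S^{n_w})$; the degrees in which each factor is nonzero are governed by $n_w$ modulo $2(p-1)$, infinitely many $w$ share each residue, and hence this group is infinitely generated in suitable degrees $i$. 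Any functor of the finitely generated $\psi$-module $\widetilde{K}^*(S^{q_1+1}\vee S^{q_2+1})$, in particular $\Phi_1^K$, is finitely generated in each degree, so no comparison equivalence can hold for $W$, and the retraction of $v_1$-periodic homotopy that the rest of your argument transports is never established. (Your first step is also shakier than you suggest: a $\Zp$-linear section of $(\mu_1\vee\mu_2)^*$ exists for free, but upgrading it to a section of $\psi$-modules would require the relevant eigenspace of $\psi^k$ on $\widetilde{K}^*(\Sigma X)$ to split off $p$-locally, which the hypotheses do not guarantee and which the paper never needs.)

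The paper's proof goes in a direction that avoids any equivalence between unstable homotopy and $K$-theoretic functors. It works with $\Omega\Sigma X\simeq JX$, whose torsion-free $K$-homology is the tensor algebra $T(\widetilde{K}_*^{\mathrm{TF}}(X))$; the surjectivity hypothesis is dualized to an injection in $K$-homology (Lemma \ref{useOfUCT}), and a leading-term argument (Theorem \ref{injectionsBeget}) shows that the free Lie algebra on the two sphere classes injects into $\widetilde{K}_*^{\mathrm{TF}}(\Omega\Sigma X)\otimes\Zp$. The exponentially many witnesses are iterated Samelson products of $\mu_1,\mu_2$ precomposed with Gray's elements $f_{p,j}\in\pi_{2j(p-1)+2}(S^3)$ of order $p$ and nonzero $e$-invariant, and they are detected in $\pi_*(J_s(X))$ by Adams' $e$-invariant via Selick's eigenvalue criterion (Theorem \ref{Sthm}); this is where finiteness of $X$ and the restriction to the skeleton $J_s(X)$ enter, and the count $W_2(k)$ then feeds into Lemma \ref{linearimplieshype}. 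The $e$-invariant is used only as a detecting homomorphism on $\mathrm{Ext}$ of $\psi$-modules, never as a computation of $v_1$-periodic homotopy.
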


This criterion is quite different to that given by Huang and Wu \cite[Theorem 1.5]{HuangWu}. Their criterion is homotopical, using hypotheses on $X$ to produce retracts of $\Omega \Sigma X$, whereas ours is cohomological, which makes it easier to check. On the other hand, their criterion is stronger, since it gives $\Zpr$-hyperbolicity, rather than just $p$-hyperbolicity. The examples they give, primarily various Moore spaces, differ from those we obtain, which are the suspensions of spaces related to complex projective space. More precisely, in Section \ref{GrassEtc}, we show that the following spaces are $p$-hyperbolic for all $p \neq 2$:

\begin{itemize}
    \item suspended complex projective space $\Sigma \mathbb{CP}^n$ for $n \geq 2$ (Example \ref{CPN}), and more generally;
    \item the suspended complex Grassmannian $\Sigma \mathrm{Gr}_{k,n}$ for $n \geq 3$ and $0<k<n$ (Example \ref{Grassmannians});
    \item the suspended Milnor Hypersurface $\Sigma H_{m,n}$ for $m \geq 2$ and $n \geq 3$, (Example \ref{MilnorHypersurfaces});
    \item the suspended unitary group $\Sigma U(n)$ for $n \geq 3$  (Example \ref{UnitaryGroups}).
\end{itemize}  

Both Theorem \ref{sphWedge} and Theorem \ref{K-detection} will be proven by constructing an exponentially growing family of classes which generate summands in the relevant homotopy groups. We think of this family as `witnessing' the hyperbolicity. For Theorem \ref{sphWedge}, one can proceed directly from the Hilton-Milnor decomposition of $S^n \vee S^m$ \cite{Hilton}. For Theorem \ref{K-detection}, we employ $K$-theoretic methods originally used by Selick \cite{Selick} to prove one direction of Moore's conjecture for suspensions having torsion-free homology.

If the map $\mu_1 \vee \mu_2$ of Theorem \ref{K-detection} induces a surjection on $\widetilde{K}^*(\ \ ) \otimes \Zp$, then so does its suspension $\Sigma \mu_1 \vee \Sigma \mu_2$. The conclusion of Theorem \ref{K-detection} may therefore be strengthened in the following way.

\begin{corollary} \label{allSuspensions} With the hypothesis of Theorem \ref{K-detection}, $\Sigma^n X$ is $p$-hyperbolic for all $n \geq 1$. \qed
\end{corollary}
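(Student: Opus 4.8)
The plan is to deduce the corollary directly from Theorem \ref{K-detection}, applied with $\Sigma^{n-1}X$ in place of $X$. The case $n=1$ is precisely Theorem \ref{K-detection}, so I may assume $n \geq 2$. Then $\Sigma^{n-1}X$ is simply connected, hence path connected, and it still has the $p$-local homotopy type of a finite $CW$-complex: if $X'$ is a finite $CW$-complex of the $p$-local homotopy type of $X$, then $\Sigma^{n-1}X'$ is a finite $CW$-complex of the $p$-local homotopy type of $\Sigma^{n-1}X$, since suspension sends finite complexes to finite complexes and preserves $p$-local equivalences. Thus the standing hypotheses of Theorem \ref{K-detection} are satisfied by $\Sigma^{n-1}X$.

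Next I would produce the map required as input to the theorem. Suspending $\mu_1 \vee \mu_2$ yields
\[
\Sigma^{n-1}\mu_1 \vee \Sigma^{n-1}\mu_2 \colon S^{q_1+n} \vee S^{q_2+n} \longrightarrow \Sigma^{n}X = \Sigma\bigl(\Sigma^{n-1}X\bigr),
\]
and writing $q_i' := q_i+n-1 \geq 1$, this is a map $S^{q_1'+1} \vee S^{q_2'+1} \to \Sigma(\Sigma^{n-1}X)$ of the form demanded by Theorem \ref{K-detection}. It remains to verify that it is onto on reduced mod $p$ $K$-theory. Since $\widetilde{K}^*$ is a reduced cohomology theory, the suspension isomorphism $\widetilde{K}^*(\Sigma Y) \cong \widetilde{K}^{*-1}(Y)$ is natural in $Y$, and since $-\otimes\Zp$ is right exact it preserves epimorphisms. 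Iterating $n-1$ times, the hypothesis that $(\mu_1\vee\mu_2)^*\otimes\Zp$ is surjective forces $(\Sigma^{n-1}\mu_1\vee\Sigma^{n-1}\mu_2)^*\otimes\Zp$ to be surjective onto $\widetilde{K}^*(S^{q_1'+1}\vee S^{q_2'+1})\otimes\Zp \cong \Zp\oplus\Zp$. Theorem \ref{K-detection} then gives that $\Sigma^{n}X = \Sigma(\Sigma^{n-1}X)$ is $p$-hyperbolic, as required.

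There is no genuine obstacle here: the corollary is a formal consequence of Theorem \ref{K-detection}, exactly as indicated in the paragraph preceding its statement. The only points deserving a word of justification are that suspension commutes with $p$-localization (so that the finiteness-up-to-$p$-local-equivalence hypothesis propagates to $\Sigma^{n-1}X$), and that $-\otimes\Zp$ preserves surjections even though $\Zp$ is not flat over $\mathbb{Z}$; both are routine.
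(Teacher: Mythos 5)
Your argument is correct and is exactly the paper's intended proof: the paragraph preceding the corollary observes that surjectivity on $\widetilde{K}^*(\ \ ) \otimes \Zp$ is preserved under suspending $\mu_1 \vee \mu_2$, and then Theorem \ref{K-detection} is re-applied with $\Sigma^{n-1}X$ in place of $X$. Your write-up just makes explicit the routine points (naturality of the suspension isomorphism, right exactness of $-\otimes\Zp$, and propagation of the finiteness and connectivity hypotheses) that the paper leaves implicit.
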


One might be motivated by this observation to ask whether, in the circumstances of Theorem \ref{K-detection}, the stable homotopy groups of $X$ satisfy the growth conditions of Definition \ref{pdef} or \ref{zprdef}. In the proofs of both Theorem \ref{sphWedge} and \ref{K-detection}, the classes that witness the hyperbolicity are composites involving Whitehead products. The suspension of a Whitehead product is always trivial \cite[Theorem 3.11]{WhiteheadSuspensions}, so the classes we detect cannot be stable. Therefore, Corollary \ref{allSuspensions} does not suggest that the stable homotopy of $\Sigma X$ should be $p$- or $\Zpr$-hyperbolic. On the other hand, it follows from our methods that, under the hypotheses of Theorem \ref{K-detection}, the stable homotopy of $\Omega \Sigma X$ is $p$-hyperbolic.

By a result of Henn \cite{HenncoH}, any co-$H$ space, and in particular any suspension, decomposes rationally as a wedge of spheres. It then follows from the Hilton-Milnor theorem \cite{Hilton} and the computation of the rational homotopy groups of spheres \cite{Serre} that such a suspension is rationally hyperbolic precisely when there are at least two spheres (of dimension $\geq 2$) in this decomposition.

If $\Sigma X$ satisfies the hypotheses of Theorem \ref{K-detection} for any prime (including 2), then by Chern Character considerations the reduced rational homology of $\Sigma X$ has dimension at least two, so $\Sigma X$ is rationally a wedge of at least two spheres by the preceding discussion. This rational equivalence is a local equivalence at all but perhaps finitely many primes, so by Theorem \ref{sphWedge}, $\Sigma X$ is $\Zpr$ hyperbolic for all $r$ at all but finitely many primes $p$. One might therefore conjecture that the conclusion of Theorem \ref{K-detection} can be strengthened to give $\Zpr$-hyperbolicity for all $r$ rather than $p$-hyperbolicity, but we do not know whether this is possible.

We now discuss situations in which it is adequate to consider ordinary cohomology, rather than $K$-theory. If $\Sigma X$ has torsion-free integral (co)homology, or if its cohomology is concentrated in even degrees, then the Atiyah-Hirzebruch spectral sequence for $K^*(\Sigma X)$ collapses on the $E^2$ page \cite{Bundlebook}. It follows by naturality that the image of the map induced by $\mu_1 \vee \mu_2 : S^{q_1+1} \vee S^{q_2+1} \to \Sigma X$ on $K$-theory is identified with the image of the induced map on cohomology. We may therefore replace $K$-theory with cohomology in Theorem \ref{K-detection}, as follows.

\begin{corollary} Let $X$ be a path connected space having the homotopy type of a finite $CW$-complex, such that the Atiyah-Hirzebruch spectral sequence for $K^*(\Sigma X)$ collapses on the $E^2$ page. Let $p$ be an odd prime. Suppose that there exists a map $\mu_1 \vee \mu_2 : S^{q_1+1} \vee S^{q_2+1} \to \Sigma X$
with $q_i \geq 1$, such that the map induced by $\mu_1 \vee \mu_2$ on $\widetilde{H}^*(\ \ ) \otimes \Zp$ is a surjection. Then $\Sigma X$ is $p$-hyperbolic. \label{H-detection}
\end{corollary}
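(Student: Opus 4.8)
The plan is to verify the hypothesis of Theorem~\ref{K-detection}; the corollary follows at once. Write $W = S^{q_1+1}\vee S^{q_2+1}$ and $f = \mu_1\vee\mu_2\colon W\to\Sigma X$; we must show that
$$f^*\colon\widetilde K^*(\Sigma X)\otimes\Zp\longrightarrow\widetilde K^*(W)\otimes\Zp$$
is surjective. Note that $\widetilde K^*(W)$ and $\widetilde H^*(W;\mathbb{Z})$ are both free abelian of rank two, the latter supported in the one or two degrees $q_1+1$ and $q_2+1$; and that the Atiyah--Hirzebruch spectral sequence for $W$ collapses at $E_2$, being additive over wedges with each sphere collapsing trivially. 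By hypothesis it also collapses for $\Sigma X$.

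The first step is to extract the consequences of naturality. The map $f$ induces a morphism of Atiyah--Hirzebruch spectral sequences which on the common $E_2 = E_\infty$ page is the map $f^*$ on integral cohomology in each weight. Hence $f^*\colon\widetilde K^*(\Sigma X)\to\widetilde K^*(W)$ preserves the skeletal filtrations and its associated graded is the cohomology map $f^*\colon\widetilde H^*(\Sigma X;\mathbb{Z})\to\widetilde H^*(W;\mathbb{Z})$. Since $\widetilde H^*(W;\mathbb{Z})$ sits in at most two degrees, the skeletal filtration of $\widetilde K^*(W)$ is bounded with at most two nonzero graded pieces. Writing $\qmin = \min\{q_1,q_2\}$ and $\qmax = \max\{q_1,q_2\}$, if $q_1\neq q_2$ there is a short exact sequence
$$0 \longrightarrow F^{\qmax+1}\widetilde K^*(W) \longrightarrow \widetilde K^*(W) \longrightarrow \widetilde H^{\qmin+1}(W;\mathbb{Z}) \longrightarrow 0$$
with $F^{\qmax+1}\widetilde K^*(W)\cong\widetilde H^{\qmax+1}(W;\mathbb{Z})$, all three groups free; if $q_1 = q_2$ the skeletal filtration on $\widetilde K^*(W)$ is trivial, $\widetilde K^*(W)=\widetilde H^{q_1+1}(W;\mathbb{Z})$, and the argument below simplifies accordingly.

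Now I would chase the filtration. Restricting $f^*$ to $F^{\qmax+1}\widetilde K^*(\Sigma X)$ and noting that $f^*$ carries $F^{\qmax+2}\widetilde K^*(\Sigma X)$ into $F^{\qmax+2}\widetilde K^*(W)=0$, one identifies $f^*\!\left(F^{\qmax+1}\widetilde K^*(\Sigma X)\right)\subseteq F^{\qmax+1}\widetilde K^*(W)$ with the image of the associated graded map $\widetilde H^{\qmax+1}(\Sigma X;\mathbb{Z})\to\widetilde H^{\qmax+1}(W;\mathbb{Z})$; likewise the image of the composite $\widetilde K^*(\Sigma X)\xrightarrow{f^*}\widetilde K^*(W)\twoheadrightarrow\widetilde H^{\qmin+1}(W;\mathbb{Z})$ contains the image of $\widetilde H^{\qmin+1}(\Sigma X;\mathbb{Z})\to\widetilde H^{\qmin+1}(W;\mathbb{Z})$. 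Because $\widetilde H^*(W;\mathbb{Z})$ is free and supported in degrees $\qmin+1$ and $\qmax+1$, the hypothesis that the cohomology map $f^*$ is surjective after $\otimes\Zp$ is precisely the statement that both of these cohomology maps are surjective after $\otimes\Zp$. Running this through the displayed short exact sequence, tensored with $\Zp$ (which preserves exactness since the quotient is free), shows that $\operatorname{Im}(f^*)\subseteq\widetilde K^*(W)$ surjects onto $\widetilde K^*(W)\otimes\Zp$, i.e.\ that $f^*\otimes\Zp$ is surjective; Theorem~\ref{K-detection} then gives the conclusion.

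The step I expect to require care is this final chase. The isomorphism $\widetilde K^*(W)\cong\widetilde H^*(W;\mathbb{Z})$ is not canonical — the skeletal filtration on $\widetilde K^*(W)$ need not split naturally — so one must work through the canonical subgroup $F^{\qmax+1}\widetilde K^*(W)$ and the canonical quotient onto $\widetilde H^{\qmin+1}(W;\mathbb{Z})$ rather than choosing coordinates. It causes no trouble that $\operatorname{Im}(f^*)$ on $K$-theory can be strictly larger than the subgroup matching $\operatorname{Im}(f^*)$ on cohomology (classes of low skeletal filtration in $\widetilde K^*(\Sigma X)$ can contribute more), since that can only help surjectivity. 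Finally, the collapse hypothesis is genuinely used: without it, there is no reason for the $K$-theory map to be governed by the cohomology map.
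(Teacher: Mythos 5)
Your proof is correct and follows the same route as the paper: reduce to Theorem \ref{K-detection} by using naturality of the Atiyah--Hirzebruch spectral sequence and the collapse hypothesis to transfer surjectivity mod $p$ from cohomology to $K$-theory. The paper dispatches this in one sentence ("the image on $K$-theory is identified with the image on cohomology"); your filtration chase through $F^{\qmax+1}\widetilde{K}^*(W)$ and the quotient $\widetilde{H}^{\qmin+1}(W)$ is just a careful spelling-out of that same identification.
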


One advantage of ordinary cohomology is that it is connected to the homotopy groups integrally, via the universal coefficient theorem and Hurewicz map. We can exploit this as follows.

\begin{example} Suppose that the Atiyah-Hirzebruch spectral sequence for $K^*(\Sigma X)$ collapses (for example, if $\Sigma X$ has torsion-free homology) and that there exists $q \in \mathbb{Z}^+$ so that $\widetilde{H}_i(\Sigma X)=0$ for $i \leq q$, and $\dim_\mathbb{Q}(\widetilde{H}_{q+1}(\Sigma X) \otimes \mathbb{Q}) \geq 2$. The Hurewicz map $\pi_{q+1}(\Sigma X) \to \widetilde{H}_{q+1}(\Sigma X)$ is an isomorphism, so there exists a map $\mu_1 \vee \mu_2 : S^{q+1} \vee S^{q+1} \to \Sigma X$ inducing the inclusion of a $\mathbb{Z}^2$-summand in $\widetilde{H}_{q+1}(\Sigma X)$. By the universal coefficient theorem relating ordinary homology and cohomology, $\mu_1 \vee \mu_2$ induces a surjection on integral cohomology, so by Corollary \ref{H-detection}, $\Sigma X$ is $p$-hyperbolic for all odd primes $p$.
\end{example}

The structure of this paper is as follows. In Section \ref{applicationSection} we give applications of the main theorems and derive a simple lower bound for the growth of the number of $\Zpr$-summands in the homotopy groups of a wedge of two spheres (Corollary \ref{bound}). The proofs of Theorems \ref{sphWedge} and Theorem \ref{K-detection} may be read largely independently; Section \ref{prelims} contains those preliminary results which are used in both cases. In Section \ref{wedgeSection}, we give the proof of Theorem \ref{sphWedge}. The remainder of the paper is devoted to proving Theorem \ref{K-detection}: Sections \ref{KFacts} and \ref{psiModSection} give the necessary background, and Section \ref{sectionMain} contains the proof. An overview of the proof strategy can be found at the start of Section \ref{sectionMain}.

\medskip

I would like to thank my PhD supervisor, Stephen Theriault, for suggesting the problems that this paper tries to address, and for many helpful conversations along the way. From a technical point of view, much is owed to papers of Huang and Wu \cite{HuangWu}, and of Selick \cite{Selick}. Neil Strickland's `Bestiary' was extremely helpful in providing examples of Theorem \ref{K-detection}. Conversations with Sam Hughes were very useful in formulating Corollary \ref{H-detection}. Thanks to the anonymous reviewer for their thoughtful comments, especially for pointing out that the space in Theorem \ref{K-detection} need only be finite after $p$-localisation.

\section{Applications} \label{applicationSection}

\subsection{Spaces having a wedge of two spheres as a retract} \label{1.3apps}

Theorem \ref{sphWedge} immediately implies that any space $X$ which has $S^{q_1+1} \vee S^{q_2+1}$ as a retract after $p$-localization is $\Zpr$-hyperbolic for that $p$ and all $r$. This implies that for all $n \geq 1$, $\Sigma^n X$ contains $S^{q_1+n+1} \vee S^{q_2+n+1}$ as a $p$-local retract, and so is $\Zpr$-hyperbolic for all $r$. We first consider examples of this form. 

\begin{example}
It is known \cite[Section 3.1]{Knudsen} that $\mathrm{Conf}_k(\mathbb{R}^n)$, the ordered configuration space of $k$ points in $\mathbb{R}^n$, contains $\bigvee_{k-1} S^{n-1}$ as a retract. It follows that, when $n, k \geq 3$, $\mathrm{Conf}_k(\mathbb{R}^n)$ is $\Zpr$-hyperbolic for all $p$ and $r$. \label{ConfigurationSpaces}
\end{example}

\begin{example}Let $M$ be an $(n-1)$-connected $2n$-dimensional manifold. By the universal coefficient theorem, there can be no torsion in $H^n(M)$. Suppose that the rank of $H^n(M)$ is at least 3. By work of Beben and Theriault \cite[Theorem 1.4]{BebenTheriault}, $\Omega M$ contains a wedge of two spheres as a retract after looping. Thus, $M$ is again $\Zpr$-hyperbolic for all $p$ and $r$. \label{niceManifolds}
\end{example}

\begin{example} Let $K$ be a simplicial complex on the vertex set $[m]=\{1, \dots , m\}$, and let $(\underline{X}, \underline{A})$ be any sequence of pairs $(D^{n_i}, S^{n_i-1})$ with $n_i \geq 2$ for $1 \leq i \leq m$. If there exist two distinct minimal missing faces of $K$ which are not disjoint, then by work of Hao, Sun and Theriault \cite[Theorem 4.2]{HaoSunTheriault} the polyhedral product $(\underline{X}, \underline{A})^K$ contains a wedge of two spheres as a retract after looping, and hence is $\Zpr$-hyperbolic for all $p$ and all $r$. \label{polyhedralProducts}
\end{example}

\begin{example} Localized away from 2, $\Sigma \mathbb{C}P^2 \simeq S^3 \vee S^5$. To see this, note that $\Sigma \mathbb{C}P^2$ has a $CW$-structure consisting of one 3-cell and one 5-cell, and that $\pi_4(S^3) \cong \mathbb{Z}/2$ \cite{Freudenthal}. This implies that the attaching map for the 5-cell is nullhomotopic after localization at an odd prime. Thus, $\Sigma \mathbb{C}P^2$ is $\Zpr$-hyperbolic for all $r$ when $p \neq 2$.

Similarly,  $\Sigma \mathbb{H}P^2$ admits a cell structure with one 5-cell and one 9-cell, and $\pi_8(S^5) \cong \mathbb{Z}/24$. Thus, $\Sigma \mathbb{H}P^2$ is $\Zpr$-hyperbolic for all $r$ when $p \neq 2,3$. \label{CP2andHP2}
\end{example}


\subsection{Suspensions of spaces related to $\mathbb{C}P^n$} \label{GrassEtc}

Suppose that one has verified the hypotheses of Theorem \ref{K-detection} for a given space $X$ and odd prime $p$, using a map $\mu_1 \vee \mu_2 : S^{q_1+1} \vee S^{q_2+1} \to \Sigma X$. If another space $Y$ admits a map $\sigma: \Sigma X \to \Sigma Y$ which induces a surjection on $\widetilde{K}^*(\ \ ) \otimes \Zp$, then it is immediate that $\sigma \circ (\mu_1 \vee \mu_2)$ satisfies the hypotheses of Theorem \ref{K-detection}, and hence that $\Sigma Y$ is $p$-hyperbolic. The slogan is that $K$-theory surjections allow us to generate new examples from old ones.

In this section, we will apply this idea. We have seen in Example \ref{CP2andHP2} that, localized away from 2, $\Sigma \mathbb{C}P^2 \simeq S^3 \vee S^5$, so certainly $\Sigma \mathbb{C}P^2$ satisfies the hypotheses of Theorem \ref{K-detection} at all odd primes $p$. We will now consider spaces $X$ which are known to admit maps $\mathbb{C}P^2 \to X$ which induce surjections on integral $K$-theory, and hence on $\widetilde{K}^*(\ \ ) \otimes \Zp$ for all odd $p$. It follows in each case that $\Sigma X$ is $p$-hyperbolic, and further by Corollary \ref{allSuspensions}, that $\Sigma^n X$ is $p$-hyperbolic for all $n \geq 1$.

The inclusion of $\mathbb{C}P^n$ into $\mathbb{C}P^{n+1}$ induces a surjection on $K$-theory, so it must still induce a surjection after suspending. Composing these inclusions with the local equivalence $\Sigma \mathbb{C}P^2 \simeq S^3 \vee S^5$ gives, for each $n \geq 2$, a map $S^3 \vee S^5 \to \Sigma \mathbb{C}P^n$ which still induces a surjection on $\widetilde{K}^*(\ \ ) \otimes \Zp$ for all odd primes $p$. Applying Theorem \ref{K-detection} to this map gives the following.

\begin{example} \label{CPN} For $n \geq 2$, $\Sigma \mathbb{C} P^n$ is $p$-hyperbolic for all $p \neq 2$.
\end{example}

Now let $\mathrm{Gr}_{k,n}$ be the Grassmannian of $k$-dimensional complex subspaces of $\mathbb{C}^n$. First note that orthogonal complement gives a homeomorphism $\mathrm{Gr}_{k,n} \cong \mathrm{Gr}_{n-k,n}$. In particular $\mathrm{Gr}_{n-1,n} \cong \mathrm{Gr}_{1,n} \cong \mathbb{C}P^{n-1}$, so $\Sigma \mathrm{Gr}_{n-1,n}$ is $p$-hyperbolic. Other Grassmannians can be treated more uniformly, as follows.

Let $\gamma_{k,n}$ denote the tautological bundle over $\mathrm{Gr}_{k,n}$. Consider the inclusion $$\iota_n : \mathbb{C}^n \to \mathbb{C}^{n+1}$$
$$(x_1, x_2, \dots, x_n) \mapsto (x_1, x_2, \dots, x_n, 0).$$

This inclusion induces a map $i_{k,n} : \mathrm{Gr}_{k,n} \to \mathrm{Gr}_{k,n+1}$, defined on $V \in \mathrm{Gr}_{k,n}$ by $V \mapsto \iota_n (V)$. It follows from this definition that $i_{k,n}^*(\gamma_{k,n+1}) = \gamma_{k,n}$. Letting $e_i$ denote the $i$-th standard basis vector in $\mathbb{C}^n$, we also have a map $j_{k,n} : \mathrm{Gr}_{k,n} \to \mathrm{Gr}_{k+1,n+1}$, defined on $V = \mathrm{Span}(v_1, v_2, \dots, v_k) \in \mathrm{Gr}_{k,n}$ by $V \mapsto \mathrm{Span}(\iota (v_1), \iota(v_2), \dots, \iota(v_k), e_{n+1})$. It follows from this definition that $j_{k,n}^*(\gamma_{k+1,n+1}) = \gamma_{k,n} \oplus \underline{\mathbb{C}^1}$, where $\underline{\mathbb{C}^1}$ is the 1-dimensional trivial bundle.

Since $K^*(\mathbb{C}P^n)$ is generated by the class of the tautological bundle, composing the maps $i_{k,n}$ and $j_{k,n}$ for different values of $k$ and $n$ will give maps $\mathbb{C}P^2 = \mathrm{Gr}_{1,3} \to \mathrm{Gr}_{k,n}$ for all $1 \leq k \leq n-2$ and $n \geq 3$ which induce surjections in integral $K$-theory. As in Example \ref{CPN}, this implies the following (the case $k=n-1$ is $\mathrm{Gr}_{n-1,n}$, which was treated first).

\begin{example} \label{Grassmannians} For $n \geq 3$ and $0<k<n$, the suspended complex Grassmannian $\Sigma \mathrm{Gr}_{k,n}$ is $p$-hyperbolic for all $p \neq 2$.
\end{example}

For $m \leq n$, the \emph{Milnor Hypersurface} $H_{m,n}$  is defined by $$H_{m,n}=\{ ([z],[w]) \in \mathbb{C} P^m \times \mathbb{C} P^n \ | \ \sum_{i=0}^m z_i w_i = 0\}.$$

Suppose that $m \geq 2$ and $n \geq 3$. Then there is an inclusion $\iota: \mathbb{C}P^2 \to H_{m,n}$, defined by
$$\iota([z_0: z_1: z_2]) = ([z_0:z_1:z_2:0:\dots:0],[0:\dots:0:1]).$$

Write $\pi_1$ for the projection $H_{m,n} \to \mathbb{C} P^m$. Then the inclusion $\mathbb{C}P^2 \to \mathbb{C}P^m$ factors as

\begin{center}
\begin{tabular}{c}
\xymatrix{
\mathbb{C}P^2 \ar[dr] \ar[r]^{\iota} & H_{m,n} \ar[d]^{\pi_1} \\
& \mathbb{C}P^m.
}
\end{tabular}
\end{center}

This implies that $\iota$ induces a surjection on integral $K$-theory, so we obtain the following.

\begin{example} \label{MilnorHypersurfaces} For $m \geq 2$ and $n \geq 3$, the suspended Milnor Hypersurface $\Sigma H_{m,n}$ is $p$-hyperbolic for all $p \neq 2$.
\end{example}

Let $U(n)$ denote the unitary group. There is a well-known map $r:\Sigma \mathbb{C}P^{n-1} \to U(n)$ (see for example \cite{WhiteheadBook}) which induces a surjection on $K$-theory. From this we obtain

\begin{example} \label{UnitaryGroups} For $n \geq 3$, the suspended unitary group $\Sigma U(n)$ is $p$-hyperbolic for all $p \neq 2$. 
\end{example}

\subsection{Quantitative lower bounds on growth} \label{quantitativeLBs}

In Section \ref{wedgeSection}, we will derive the following simple lower bound for the $\liminf$ in the definition of $\Zpr$-hyperbolicity, for the space $S^{q_1+1} \vee S^{q_2+1}$.

\begin{corollary} \label{bound} Let $p$ be a prime and $r \in \mathbb{Z}^+$. Let $t_m$ be the constants of Definition \ref{zprdef} for $X=S^{q_1+1} \vee S^{q_1+1}$. Then $$\liminf_m\frac{\ln(t_m)}{m} \geq \frac{\ln(2)}{\mathrm{max}(q_1,q_2)}.$$
\end{corollary}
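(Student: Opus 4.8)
The plan is to extract the quantitative bound directly from the construction used to prove Theorem \ref{sphWedge}. That proof exhibits an exponentially growing family of classes in $\pi_*(S^{q_1+1}\vee S^{q_2+1})$, each generating a $\Zpr$-summand, and those classes are built from the Hilton--Milnor decomposition: the homotopy of the wedge receives contributions from the homotopy groups of spheres $S^{N}$ indexed by iterated basic Whitehead products of the two generators, where a product of weight $w$ (i.e. involving $w$ letters, $a$ of which are the first generator and $b=w-a$ the second) contributes a sphere of dimension $N = a q_1 + b q_2 + 1$. The number of basic products of weight $w$ in two letters is the necklace-counting quantity $\tfrac{1}{w}\sum_{d\mid w}\mu(d)2^{w/d}$, which is asymptotically $2^w/w$; so there are on the order of $2^w$ summands ``appearing around dimension $w\cdot\max(q_1,q_2)$''.

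First I would fix the precise statement from Section \ref{wedgeSection} that, for each basic product of weight $w$, one obtains (via the relevant element of $\pi_*$ of the corresponding sphere, e.g. an $\alpha_1$-type class or an iterated Whitehead-product-with-torsion construction) a $\Zpr$-summand in $\pi_i(S^{q_1+1}\vee S^{q_2+1})$ for some $i$ that exceeds the dimension $aq_1+bq_2+1$ of that sphere by only a bounded amount — call the bound $C$, independent of $w$. Since $aq_1+bq_2+1 \le w\,\max(q_1,q_2)+1$, every one of these summands lands in degree $i \le w\,\max(q_1,q_2)+1+C$. Next, counting: the number of basic products of weight exactly $w$ is at least, say, $\tfrac{2^w - 2^{w/2+1}}{w} \ge 2^{w-1}/w$ for $w$ large (using the Witt/necklace formula and crudely bounding the error terms). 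Summing over all weights $\le w$ only helps, so $t_m \ge 2^{w-1}/w$ once $m \ge w\,\max(q_1,q_2)+1+C$.

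Then I would simply optimize. Given $m$, take $w = \lfloor (m-1-C)/\max(q_1,q_2)\rfloor$, so that $w\,\max(q_1,q_2)+1+C \le m$ and $w \ge m/\max(q_1,q_2) - O(1)$. Then
\[
\frac{\ln(t_m)}{m} \ \ge\ \frac{(w-1)\ln 2 - \ln w}{m} \ \ge\ \frac{\ln 2}{\max(q_1,q_2)} - \frac{O(\ln m)}{m},
\]
and taking $\liminf_m$ kills the error term, giving exactly $\liminf_m \ln(t_m)/m \ge \ln(2)/\max(q_1,q_2)$. Note the leading exponent $\ln 2$ rather than $\ln 2/\max(q_1,q_2)$-with-a-correction is forced precisely because a weight-$w$ product can sit at dimension as low as $w\,q_{\min}+1$ when it is ``unbalanced'', but in the worst case it sits near $w\,q_{\max}$, and we only have $2^w$ products total at weight $w$; the clean bound follows from pessimistically placing all $2^w$ of them at the top.

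The main obstacle is the bookkeeping in the first step: making sure that the Theorem \ref{sphWedge} construction really does produce a distinct $\Zpr$-summand for each basic product (or at least for a positive proportion of them, which would still suffice since $\ln$ of a constant fraction of $2^w$ is still $w\ln 2 - O(1)$) and that the ``excess dimension'' $C$ over the sphere dimension is genuinely uniform in $w$ — this is where one must lean on the explicit form of the witnessing classes. Everything after that is the elementary optimization above. I would also remark that this argument is visibly wasteful: a sharper count that tracks how many products of weight $w$ are unbalanced (hence sit well below dimension $w\,q_{\max}$) would improve the constant, but the stated bound is what falls out cleanly.
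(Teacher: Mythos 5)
Your proposal is correct and is essentially the paper's argument, just unpacked: the paper's proof simply observes that the last line of the proof of Theorem \ref{sphWedge} gives $t_{kq_2+1+j}\geq W_2(k)$ with $q_2=\max(q_1,q_2)$ and $j$ the fixed stable stem from Lemma \ref{fuel} (your uniform constant $C$), and that the final computation in the proof of Lemma \ref{linearimplieshype} then yields exactly $\liminf_m \ln(t_m)/m \geq \ln(2)/a$ with $a=q_2$. The two concerns you flag (distinctness of the summands across basic products, and uniformity of the dimensional excess) are already settled in the proof of Theorem \ref{sphWedge}, since the Hilton--Milnor factors contribute direct summands of $\pi_*$ and $j$ depends only on $p$ and $r$.
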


This implies that $t_m$ eventually exceeds $((1-\varepsilon)2)^{\frac{m}{\mathrm{max}(q_1,q_2)}}$ for any $\varepsilon >0$. The constant 2 reflects the number of wedge summands. Note that this bound is independent of $p$ and $r$.

\begin{example} Taking $\varepsilon = \frac{1}{4}$, we find that for all $r \in \mathbb{Z}^+$ and all primes $p$ the number of $\Zpr$-summands in $\bigoplus_{i \leq m} \pi_i(S^2 \vee S^2)$ eventually exceeds $(\frac{3}{2})^{m}$.
\end{example}

One can produce an analogous quantitative bound on the $\liminf$ in the case of Theorem \ref{K-detection}, but this bound is very weak. In particular, it depends on knowledge of the Adams operations on $K^*(X)$, and is at best $\frac{\ln(2)}{2(p-1)}$.

\section{Preliminary results} \label{prelims}

Both Theorem \ref{sphWedge} and Theorem \ref{K-detection} will be proven by means of Lemma \ref{linearimplieshype}. Our first goal is to establish this lemma.

Let $L$ be the free Lie algebra over $\mathbb{Q}$ on basis elements $x_1 , \dots, x_n$. Write $\mathscr{L}_k$ for the subset of $L$ consisting of the \emph{basic products} of the $x_i$ of \emph{weight} $k$, in the sense of \cite{Hilton}, where the basic products of weight 1 are taken to be the $x_i$, ordered by $x_1 < x_2 < \dots < x_n$. The union $\mathscr{L} = \bigcup_{k=1}^\infty \mathscr{L}_k$ is a vector space basis for $L$ (see for example \cite[Theorem 5.3]{SerreBook}, but note that Serre uses the name \emph{Hall basis} for the set of basic products).

Let $\mu: \mathbb{Z}^+ \longrightarrow \{-1, 0, 1\}$ be the M\"obius inversion function, defined by $$\mu(s) = \begin{cases} 
      1 & s=1 \\
      0 & s>1 \textrm{ is not square free} \\
      (-1)^{\ell} & s>1 \textrm{ is a product of $\ell$ distinct primes.}
      \end{cases}$$ The Witt Formula $W_n(k)$ is then defined by

$$W_n(k)=\frac{1}{k} \sum_{d \lvert k} \mu(d) n^{\frac{k}{d}}.$$

\begin{theorem}{\cite[Theorem 3.3]{Hilton}} \label{ungradedWitt} Let $L$ be the free Lie algebra over $\mathbb{Q}$ on basis elements $x_1 , \dots, x_n$. Then $|\mathscr{L}_k| = W_n(k)$. \qed \end{theorem}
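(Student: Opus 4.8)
The plan is to reduce the statement to a Hilbert series computation and then apply M\"obius inversion. Since it has already been noted above that $\mathscr{L}=\bigcup_k\mathscr{L}_k$ is a vector space basis for $L$, it suffices to show that $\ell_k:=\dim_{\mathbb{Q}}L_k=W_n(k)$, where $L_k$ denotes the degree-$k$ homogeneous component of $L$ when each generator $x_i$ is placed in degree $1$ (so that $|\mathscr{L}_k|=\ell_k$).

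First I would invoke the Poincar\'e--Birkhoff--Witt theorem. The universal enveloping algebra $U(L)$ of the free Lie algebra $L$ is the free associative algebra $T=\mathbb{Q}\langle x_1,\dots,x_n\rangle$, graded so that each $x_i$ has degree $1$. Choosing a homogeneous basis of $L$ adapted to the grading, PBW asserts that the ordered monomials in this basis form a $\mathbb{Q}$-basis of $U(L)$; since $L$ is of finite type, comparing graded dimensions degree by degree gives the identity of formal power series
\[
\sum_{m\geq 0}(\dim_\mathbb{Q} T_m)\,t^m \;=\; \prod_{k\geq 1}\frac{1}{(1-t^k)^{\ell_k}}.
\]
The left-hand side is just $\sum_{m\geq 0}n^m t^m=(1-nt)^{-1}$, as $T_m$ has the length-$m$ monomials in the $x_i$ as a basis.

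Next I would take logarithms and compare coefficients. On the left, $-\ln(1-nt)=\sum_{m\geq 1}\frac{n^m}{m}t^m$; on the right, $\sum_{k\geq 1}\ell_k\bigl(-\ln(1-t^k)\bigr)=\sum_{k\geq 1}\ell_k\sum_{j\geq 1}\frac{t^{kj}}{j}$. Extracting the coefficient of $t^m$ and clearing the denominator $m$ yields
\[
n^m \;=\; \sum_{k\mid m}k\,\ell_k.
\]
M\"obius inversion applied to this relation then gives $k\,\ell_k=\sum_{d\mid k}\mu(d)\,n^{k/d}$, hence $\ell_k=W_n(k)$, which is the claim. (One could alternatively avoid PBW and run Witt's original necklace-counting argument, but the generating-function route is cleaner given that we already have the basis result in hand.)

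The only genuinely non-formal ingredient is PBW, together with the identification of $U(L)$ with the tensor algebra $T$; everything afterwards is routine manipulation of generating functions and the standard M\"obius inversion. Accordingly, I expect the one point needing a word of care to be the justification that the two sides of the PBW comparison agree term by term --- specifically that each $L_k\subseteq T_k$ is finite-dimensional, so that the infinite product converges formally --- but this is immediate.
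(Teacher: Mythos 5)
Your argument is correct. Be aware, though, that the paper does not actually prove this statement: Theorem \ref{ungradedWitt} is imported verbatim from Hilton's paper (it is stated with a citation and no proof), and Hilton in turn obtains the count of basic products by the combinatorial argument going back to Witt and M.~Hall. What you have supplied is the classical enveloping-algebra proof of the Witt formula: identify $U(L)$ with the free associative algebra $\mathbb{Q}\langle x_1,\dots,x_n\rangle$, use Poincar\'e--Birkhoff--Witt to equate Hilbert series, take logarithms, and apply M\"obius inversion. Each step checks out. The reduction $|\mathscr{L}_k|=\dim_{\mathbb{Q}}L_k$ is legitimate because every basic product of weight $k$ is homogeneous of degree $k$, and a basis of a graded module that is partitioned according to the grading restricts to a basis of each homogeneous component. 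The coefficient comparison giving $n^m=\sum_{k\mid m}k\,\ell_k$ is right, and the finiteness of each $\ell_k$ (needed to make sense of the formal infinite product) follows from $L_k\subseteq T_k$ with $\dim T_k=n^k$, as you note. Compared with Hilton's route, your argument buys a short, self-contained derivation at the cost of invoking PBW and the identification $U(L)\cong T$; since the paper only ever uses the statement as quoted, either justification serves equally well.
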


\begin{lemma}{\cite[Introduction]{Bahturin}} \label{ratio} The ratio
$$\frac{W_n(k)}{\frac{1}{k}n^k}$$
tends to 1 as $k$ tends to $\infty$. $\square$
\end{lemma}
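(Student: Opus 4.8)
The plan is to read off the ratio directly from the definition of the Witt formula and isolate the dominant term. Expanding $W_n(k) = \frac1k \sum_{d \mid k} \mu(d) n^{k/d}$ and dividing by $\frac1k n^k$, the $d = 1$ term contributes exactly $1$ (since $\mu(1) = 1$ and $n^{k/1} = n^k$), so
$$\frac{W_n(k)}{\frac1k n^k} = 1 + \sum_{\substack{d \mid k \\ d > 1}} \mu(d)\, n^{k/d - k}.$$
It therefore suffices to show that the error term $E_k := \sum_{d \mid k,\, d > 1} \mu(d)\, n^{k/d-k}$ tends to $0$ as $k \to \infty$. (Here we use $n \geq 2$, which is the relevant case in all applications; for $n = 1$ one has $W_1(k) = 0$ for $k > 1$, so the statement fails and some such hypothesis is needed.)

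The one real step is a crude estimate of $E_k$. Since $|\mu(d)| \leq 1$, we have $|E_k| \leq \sum_{d \mid k,\, d > 1} n^{k/d - k}$. Every divisor $d > 1$ of $k$ satisfies $d \geq 2$, hence $k/d \leq k/2$ and $k/d - k \leq -k/2$, so each summand is at most $n^{-k/2}$. As $k$ has at most $k$ positive divisors (fewer than $k$ of which exceed $1$), we conclude
$$|E_k| \leq k \cdot n^{-k/2} = k \left(\tfrac{1}{\sqrt n}\right)^{k}.$$
Since $n \geq 2$ gives $1/\sqrt n < 1$, the right-hand side tends to $0$ as $k \to \infty$, which proves the claim.

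There is no genuine obstacle here: the argument is an elementary estimate, and the only point requiring any care is the implicit hypothesis $n \geq 2$. A sharper bound on the divisor function would improve the rate of convergence but is unnecessary, since the geometric factor $n^{-k/2}$ swamps any polynomial factor; if one wants something slightly more explicit, isolating the $d=2$ term (which occurs only for even $k$) and bounding the rest by divisors $d \geq 3$ gives $|E_k| \leq n^{-k/2} + (k-2)\, n^{-k/3}$.
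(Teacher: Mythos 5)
Your proof is correct. Note that the paper does not actually prove this lemma at all --- it is stated with a reference to the introduction of Bahturin's book and closed with a \(\square\) --- so there is no argument in the paper to compare against; what you have written is the standard elementary verification. The decomposition \(\frac{W_n(k)}{\frac1k n^k} = 1 + E_k\) with \(E_k=\sum_{d\mid k,\,d>1}\mu(d)n^{k/d-k}\), the per-term bound \(n^{k/d-k}\le n^{-k/2}\) for \(d\ge 2\), and the crude divisor count \(\le k\) together give \(|E_k|\le k\,n^{-k/2}\to 0\), which is exactly right, and you are also right to flag that \(n\ge 2\) is needed (for \(n=1\) the ratio is \(0\) for \(k>1\)); the paper only ever applies the lemma with \(n=2\), and indeed remarks immediately afterwards that the conclusion is exponential growth ``for \(n\ge 2\)''. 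One tiny quibble with your closing aside: for divisors \(d\ge 3\) one has \(k/d-k\le -2k/3\), so the refined bound should read \(n^{-2k/3}\) per term rather than \(n^{-k/3}\) (as written, that parenthetical bound is weaker than your main estimate); this does not affect the proof.
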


In particular, this implies that for $n \geq 2$, the Witt function $W_n(k)$ grows exponentially in $k$. It should follow that if the number of $p$-torsion summands in $\bigoplus_{i \leq k} \pi_i(Y)$ exceeds $W_2(k)$, then $Y$ is $p$-hyperbolic. The following lemma makes a slightly generalised form of this idea precise.

\begin{lemma} \label{linearimplieshype} Let $Y$ be a space. Suppose that there exist $a,b \in \mathbb{Z}^+$ such that the number of $p$-torsion summands (respectively, $\Zpr$-summands) in $\bigoplus_{i \leq ak+b} \pi_{i}(Y)$ exceeds $W_2(k)$, for all $k$ large enough. Then $Y$ is $p$-hyperbolic (respectively, $\Zpr$-hyperbolic).
\end{lemma}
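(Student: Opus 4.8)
The plan is to convert the hypothesis, which controls the number of torsion summands only along the sparse set of degrees $\{ak+b\}$, into the $\liminf$ appearing in Definition \ref{pdef} by a squeezing argument on the index $k$, and then to read off the growth rate of $W_2(k)$ from Lemma \ref{ratio}. I will write out the argument for $p$-torsion summands and $T_m$; the $\Zpr$-case is word-for-word the same with $t_m$ and Definition \ref{zprdef} in place of $T_m$ and Definition \ref{pdef}.

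First I would fix $m$ large and set $k=k(m)=\lfloor (m-b)/a \rfloor$, so that $ak+b \leq m < a(k+1)+b = ak+a+b$ and $k(m)\to\infty$ as $m\to\infty$. Since $T_m$ is non-decreasing in $m$, the hypothesis gives, once $k$ is large enough,
$$T_m \;\geq\; T_{ak+b} \;>\; W_2(k).$$
For $k\geq 1$ one has $W_2(k) = \tfrac{1}{k}\sum_{d\mid k}\mu(d)2^{k/d} \geq 1$ (for instance $W_2(1)=2$), so $\ln T_m > \ln W_2(k) \geq 0$, and combining this with $m < ak+a+b$ yields
$$\frac{\ln T_m}{m} \;>\; \frac{\ln W_2(k)}{ak+a+b}$$
for all sufficiently large $m$.

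It remains to estimate the right-hand side. Writing $\ln W_2(k) = \ln\!\bigl(\tfrac{1}{k}2^k\bigr) + \ln\!\bigl(\tfrac{W_2(k)}{k^{-1}2^k}\bigr) = k\ln 2 - \ln k + o(1)$, where the $o(1)$ term is controlled by Lemma \ref{ratio}, we get
$$\lim_{k\to\infty}\frac{\ln W_2(k)}{ak+a+b} \;=\; \frac{\ln 2}{a}.$$
Since $k(m)\to\infty$, taking $\liminf_{m\to\infty}$ of the previous displayed inequality gives $\liminf_m \frac{\ln T_m}{m} \geq \frac{\ln 2}{a} > 0$, which is exactly the condition for $Y$ to be $p$-hyperbolic. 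There is no real obstacle here: the lemma is a bookkeeping statement, and the only care required is the floor-function juggling relating $m$ to $k$ together with the invocation of Lemma \ref{ratio} for $\ln W_2(k)\sim k\ln 2$. I would also remark that this argument in fact produces the explicit bound $\liminf_m \frac{\ln T_m}{m}\geq (\ln 2)/a$, which is what later yields Corollary \ref{bound} (with $a=\max(q_1,q_2)$).
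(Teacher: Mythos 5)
Your proposal is correct and follows essentially the same route as the paper: relate $T_m$ to $T_{ak+b}$ (the paper does this by passing to the subsequence $m=ak+b$, you do it slightly more explicitly via monotonicity of $T_m$ and the floor function), then apply Lemma \ref{ratio} to get $\ln W_2(k)\sim k\ln 2$ and conclude $\liminf_m \frac{\ln T_m}{m}\geq \frac{\ln 2}{a}>0$. Your closing remark about the explicit bound $(\ln 2)/a$ matches exactly how the paper later extracts Corollary \ref{bound}.
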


\begin{proof} The proofs for $p$- and $\Zpr$-hyperbolicity are identical, so we give only the former. Reframing the hypothesis in terms of the sequence $\{ T_m \}_m$ of Definition \ref{pdef}, we are assuming precisely that $T_{ak+b} > W_2(k)$ for sufficiently large $k$. We then have that

$$\liminf_m\frac{\ln(T_m)}{m} = \liminf_k\frac{\ln(T_{ak+b})}{ak+b} \geq \liminf_k\frac{\ln(W_2(k))}{ak+b}.$$

It then follows from Lemma \ref{ratio} that if $1>\varepsilon >0$, once $k$ is large enough, we have $$W_2(k) > (1-\varepsilon)\frac{1}{k}2^k.$$

This implies that $$\liminf_k\frac{\ln(W_2(k))}{ak+b} \geq \liminf_k\frac{\ln((1-\varepsilon)\frac{1}{k}2^k)}{ak+b},$$ and since this holds for all $\varepsilon >0$, $$\liminf_m\frac{\ln(T_m)}{m} \geq \liminf_k\frac{\ln(W_2(k))}{ak+b} \geq \liminf_k\frac{\ln(\frac{1}{k}2^k)}{ak+b} = \liminf_k\frac{\ln(\frac{1}{k})+k\ln(2)}{ak+b}=\frac{\ln(2)}{a},$$ which is greater than zero, as required.
\end{proof}

\subsection{Existence of summands in the stable stems} 

We write $\pi_j^S$ for the $j$-th stable stem in the homotopy groups of spheres, that is
$$\pi_j^S:=\lim_{n \rightarrow \infty} \pi_{n+j}(S^n).$$

The proof of Theorem \ref{sphWedge}, depends on having, for each $p$ and $r$, some $j$ such that $\pi_j^S$ contains a $\Zpr$-summand. The purpose of this subsection is to show that the existence of such a $j$ follows from existing work of Adams and others.

\begin{lemma} \label{fuel} For any prime $p$ and any $r \in \mathbb{Z}^+$, there exists $j$ such that $\Zpr$ is a direct summand in $\pi^S_j$. That is, for a fixed choice of such a $j$, $\Zpr$ is a direct summand in $\pi_{n+j}(S^n)$ for all $n \geq j+2$.
\end{lemma}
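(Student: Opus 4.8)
The plan is to produce, for each prime $p$ and each $r \geq 1$, an explicit integer $j$ for which $\pi^S_j$ contains a $\Zpr$-summand, drawing on the classical computation of the image of the $J$-homomorphism. The clean way to do this is to invoke the image of $J$: by the work of Adams on $J(X)$ together with the Quillen--Sullivan solution of the Adams conjecture, the image of the stable $J$-homomorphism $J : \pi_{4k-1}(SO) \to \pi^S_{4k-1}$ is a cyclic direct summand of $\pi^S_{4k-1}$ whose order is the denominator of $B_{2k}/4k$ (for $p$ odd the relevant statement is that the $p$-primary part of $\mathrm{im}\,J$ in degree $4k-1$ is a $\mathbb{Z}/p^{\nu_p(2k)+1}$ direct summand, where $\nu_p$ denotes $p$-adic valuation). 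So the first step is to fix this statement precisely with a citation, and then it becomes a matter of elementary number theory to choose $k$ so that the $p$-primary part is exactly $\Zpr$.

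Concretely, for $p$ an odd prime I would take $k = \tfrac{1}{2}p^{r-1}(p-1)$, so that $4k - 1 = 2p^{r-1}(p-1) - 1$; then by von Staudt--Clausen-type considerations $\nu_p(2k) = \nu_p(p^{r-1}(p-1)) = r-1$, so the $p$-component of $\mathrm{im}\,J$ in degree $4k-1$ is $\mathbb{Z}/p^{r}$, and since $\mathrm{im}\,J$ splits off $\pi^S_{4k-1}$ this $\Zpr$ is a direct summand of $\pi^S_{4k-1}$. (One checks that a $\mathbb{Z}/p^r$ summand of $\mathrm{im}\,J$, itself a direct summand of the finitely generated abelian group $\pi^S_{4k-1}$, is a direct summand of $\pi^S_{4k-1}$: splitting off a cyclic $p$-group of maximal $p^r$-exponent is routine.) For $p = 2$ one argues similarly using the order of $\mathrm{im}\,J$ in degrees $8k-1$ and $8k+3$, which contains a $2$-primary cyclic summand whose order is divisible by $2^{\nu_2(2k)+3}$; choosing $k$ a suitable power of $2$ makes this summand contain, hence (after splitting) have as a retract, a copy of $\mathbb{Z}/2^r$. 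Alternatively, to avoid splitting off a subgroup, one can simply choose $k$ so that the $2$-component of $|\mathrm{im}\,J|$ in that degree is exactly $2^r$.

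The final sentence of the statement, that $\Zpr$ is a summand of $\pi_{n+j}(S^n)$ for all $n \geq j+2$, is then immediate from the Freudenthal suspension theorem: $\pi_{n+j}(S^n) \to \pi_{n+j+1}(S^{n+1})$ is an isomorphism once $n+j \leq 2n-2$, i.e. $n \geq j+2$, so the stable group $\pi^S_j$ is already attained at $n = j+2$ and a direct-summand splitting of $\pi^S_j$ transports back along the isomorphism.

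The main obstacle is not conceptual but bookkeeping: pinning down the exact $p$-adic valuation of the order of the image of $J$ in the relevant degree and confirming that it is a genuine \emph{direct} summand (not merely a subgroup) of $\pi^S_*$. Both facts are standard --- the direct-summand statement is part of Adams's original work, refined by Quillen and Sullivan --- so with the right reference in hand the argument is short. A cheap alternative that sidesteps the valuation arithmetic entirely: it suffices to exhibit \emph{one} degree with a $\Zpr$-summand, and for that one may just cite explicit low-dimensional computations of $\pi^S_*$ (e.g. $\pi^S_3 \cong \mathbb{Z}/24$ handles $p=2,r \leq 3$ and $p=3,r=1$) together with the general image-of-$J$ statement to cover the rest; but the uniform choice $k=\tfrac12 p^{r-1}(p-1)$ above is cleaner and I would present that.
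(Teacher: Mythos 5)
Your main construction is essentially the paper's: both arguments extract a $\Zpr$-summand from the cyclic direct summand of $\pi^S_{4k-1}$ of order $m(2k)$ supplied by the image of $J$, and both reduce to computing $\nu_p(m(2k))$. For odd $p$ your choice $2k=p^{r-1}(p-1)$ gives $\nu_p(m(2k))=1+\nu_p(2k)=r$, exactly as in the paper (which uses the degree $4t-1$ with $t=p^{r-1}(p-1)$, i.e.\ twice your $k$; either choice works, and your reduction from the cyclic summand $\mathbb{Z}/m(2k)$ to its $p$-primary component is the same one the paper makes). The Freudenthal step at the end is fine, and the case $p=2$, $r\geq 3$ via $\nu_2(m(2k))=3+\nu_2(k)$ also matches the paper.

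The gap is at $p=2$ with $r=1$ and, more seriously, $r=2$. Since $\nu_2(m(2k))=2+\nu_2(2k)\geq 3$, the image of $J$ in degrees $4k-1$ never yields a $\mathbb{Z}/2$- or $\mathbb{Z}/4$-summand, and in the remaining degrees ($\equiv 0,1 \bmod 8$) it yields only $\mathbb{Z}/2$. Your proposed patch --- citing $\pi^S_3\cong\mathbb{Z}/24$ as handling ``$p=2$, $r\leq 3$'' --- is false: $\mathbb{Z}/24\cong\mathbb{Z}/8\oplus\mathbb{Z}/3$ is cyclic, so its only $2$-torsion direct summand is $\mathbb{Z}/8$; a $\mathbb{Z}/2^r$ subgroup of $\mathbb{Z}/2^s$ with $r<s$ is not a summand. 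The $r=1$ case is trivially repaired (e.g.\ $\pi^S_1\cong\mathbb{Z}/2$), but the $r=2$ case genuinely requires exhibiting a $\mathbb{Z}/4$ direct summand somewhere in the stable stems, which no image-of-$J$ argument can supply; the paper imports the computation $\pi^S_{34}\cong\mathbb{Z}/4\oplus(\mathbb{Z}/2)^3$ of Barratt, Mahowald and Tangora for exactly this purpose. Without some such explicit input your proof does not cover $\mathbb{Z}/4$-summands, and hence does not prove the lemma for $p^r=4$.
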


\begin{proof} We write $\nu_p(s)$ for the largest power of $p$ dividing the integer $s$.

CASE 1 ($p$ odd): Set $t:=p^{r-1}(p-1)$, and notice that, since $(p-1)$ is even, $j:=4t-1$ is congruent to $7$ mod 8. Theorem 1.6 of \cite{AdamsIV}, and the discussion immediately following it, then tells us that $\pi_j^S$ contains a direct summand isomorphic to $\mathbb{Z}/m(2t)$, for a function $m$ which Adams defines. By decomposing this subgroup into direct summands of prime power order, it suffices to show that $\nu_p(m(2t))=r$.

The discussion after Theorem 2.5 in \cite{AdamsII} gives that since $t \equiv 0$ mod $(p-1)$, $$\nu_p(m(2t))=1+\nu_p(2t).$$

Now, $\nu_p(2t)$ is equal to $(r-1)$, by definition of $t$, so $\nu_p(m(2t))=r$, as required.

CASE 2 ($p=2$, $r \geq 3$): Set $t:=2^{r-3}$, and set $j:=4t-1$. From Theorem 1.5, and the discussion following Theorem 1.6 in \cite{AdamsIV}, $\pi_j^S$ has a direct summand isomorphic to $\mathbb{Z}/m(2t)$, regardless of whether $j$ is congruent to 3 or 7 mod 8. Again, referring to the discussion after Theorem 2.5 of \cite{AdamsII}, we see that

$$\nu_2(m(2t))=2+\nu_2(2t)=3+\nu_2(t)=r,$$

as required.

CASE 3 ($p^r=2$ and $p^r=4$): It is known from \cite{Freudenthal} that $\pi_1^S \cong \mathbb{Z}/2$, and from \cite{BarrattMahowaldTangora} that $\pi_{34}^S \cong \mathbb{Z}/4 \oplus (\mathbb{Z}/2)^3$.
\end{proof}

\section{Proof of Theorem \ref{sphWedge}} \label{wedgeSection}

In this section we prove Theorem \ref{sphWedge}, which says that the wedge of two spheres is $\Zpr$-hyperbolic for all $p$ and $r$. We also prove Corollary \ref{bound}, which extracts some simple quantitative information from the proof of Theorem \ref{sphWedge}. We first record the following simple observation.

\begin{remark} \label{kqObservation} Let $k_1, \dots, k_n$ and $q_1, \dots q_n$ be non-negative integers. Suppose that $q_1 \leq q_2 \leq \dots \leq q_n$, and let $k = \sum_{i=1}^n k_i$. Then $$k q_1 \leq \sum_{i=1}^n k_i q_i \leq k q_n.$$
\end{remark}

\begin{proof}[Proof of Theorem \ref{sphWedge}] Assume without loss of generality that $q_1 \leq q_2$. By Lemma \ref{linearimplieshype} it suffices to show that there exist constants $a$ and $b$ such that the number of $\Zpr$-summands in $$\bigoplus_{i \leq ak+b} \pi_{i}(S^{q_1+1} \vee S^{q_2+1})$$ exceeds $W_2(k)$, for $k$ large enough.

We first apply the Hilton-Milnor Theorem. Since we are dealing with spheres, we need only the original form, due to Hilton in \cite{Hilton}:

$$\Omega (S^{q_1+1} \vee S^{q_2+1}) \simeq \Omega \Sigma (S^{q_1} \vee S^{q_2}) \simeq \Omega \prod_{B \in \mathscr{L}} S^{k_1 q_1 + k_2 q_2 + 1},$$ where, as in Section \ref{prelims}, $\mathscr{L} = \bigcup_{k=1}^\infty \mathscr{L}_k$ is Hilton's `basic product' basis for $L$, the free Lie Algebra over $\mathbb{Q}$ on two generators $x_1$ and $x_2$, and $k_i$ is the number of occurrences of the generator $x_i$ in the bracket $B$. Recall also from Section \ref{prelims} that the weight $k$ of a bracket $B$ is equal to $k_1+k_2$, and that the cardinality of $\mathscr{L}_k$ is given by the Witt formula $W_2(k)$ by Theorem \ref{ungradedWitt}.

For fixed $k \in \mathbb{Z}^+$, consider the factor in the above product corresponding to $\mathscr{L}_k \subset \mathscr{L}$: $$F_k := \Omega \prod_{B \in \mathscr{L}_k} S^{k_1 q_1 + k_2 q_2 + 1}.$$ The associated subgroup of $\pi_*(S^{q_1+1} \vee S^{q_2+1})$,

$$\bigoplus_{B \in \mathscr{L}_k} \pi_*(S^{k_1 q_1 + k_2 q_2 + 1}),$$ is a direct summand.

We will first find a $\Zpr$-summand in the homotopy groups of each of the spheres appearing in $F_k$. Since $q_1 \leq q_2$, Remark \ref{kqObservation} applies, and we may lower bound the dimensions of spheres appearing in $F_k$ by $k_1 q_1 + k_2 q_2 + 1 \geq k q_1+1$. By Lemma \ref{fuel}, there exists $j \in \mathbb{Z}^+$ such that $\pi_{j+\ell}(S^\ell)$ has a direct summand $\Zpr$ for $\ell \geq j+2$. Therefore, if $k$ is large enough that $k q_1 \geq j+1$, then $k_1 q_1 + k_2 q_2 + 1 \geq j+2$ - that is, the $j$-th stem is stable on all of the spheres occurring in $F_k$. Thus, for $k$ large enough, there is a $\Zpr$ summand in $\pi_{j+k_1 q_1 + k_2 q_2 + 1}(S^{k_1 q_1 + k_2 q_2 + 1})$ whenever $k_1+k_2=k$.

We now upper bound the dimension of the homotopy groups in which these summands appear. Since $q_1 \leq q_2$ we have by Remark \ref{kqObservation} that $j+k_1 q_1 + k_2 q_2 + 1 \leq k q_2+1+j$, so each of the $\Zpr$-summands we have identified is a distinct direct summand in

$$\bigoplus_{i \leq k q_2+1+j} \ \bigoplus_{B \in \mathscr{L}_k} \pi_i(S^{k_1 q_1 + k_2 q_2 + 1}),$$ hence in $$\bigoplus_{i \leq k q_2+1+j} \pi_i(S^{q_1+1} \vee S^{q_2+1}).$$ We have identified one such summand for each $B \in \mathscr{L}_k$, so the number of $\Zpr$-summands in $\bigoplus_{i \leq k q_2+1+j} \pi_i(S^{q_1+1} \vee S^{q_2+1})$ is at least $|\mathscr{L}_k| = W_2(k)$. Thus, taking $a=q_2$ and $b=1+j$ in Lemma \ref{linearimplieshype} suffices. \end{proof}

\begin{proof}[Proof of Corollary \ref{bound}]
The last line of the proof of Lemma \ref{linearimplieshype} shows that $\liminf_m \frac{\ln{t_m}}{m} > \frac{\ln{2}}{a}$. The last line of the proof of Theorem \ref{sphWedge} implies that $a$ may be taken to be $q_2$, under the assumption that $q_1 \leq q_2$, which implies the result.
\end{proof}

\section{$K$-theory and $K$-homology of $\Omega \Sigma X$} \label{KFacts}

The remainder of this paper proves Theorem \ref{K-detection}. Sections \ref{KFacts} and \ref{psiModSection} assemble necessary background, which we will use in Section \ref{sectionMain} to prove the result.

When studying the homotopy groups of a suspension $\Sigma X$, as in Theorem \ref{K-detection}, the following approach is natural. Since $\pi_*(\Sigma X) \cong \pi_{*-1}(\Omega \Sigma X)$, we may instead study $\Omega \Sigma X$. This is useful because $\Omega \Sigma X$ is well understood homologically via the Bott-Samelson theorem, which decomposes its homology as the tensor algebra on $\widetilde{H}_*(X)$. Because we will need to use Adams' $e$-invariant, which is defined in terms of $K$-theory, we wish to replace ordinary homology with $K$-homology.

The purpose of Section \ref{KFacts} is to record the version of the Bott-Samelson theorem which applies to (torsion-free) $K$-homology, along with a universal coefficient theorem for passing between $K$-theory and $K$-homology. All of the material here is already known (in particular much of it is in \cite{Selick}) so its summary here is for convenience and clarity.

Our conventions on definition of $\widetilde{K}^*(X)$ are those of \cite{AtiyahHirzebruch}. In particular, we define $\widetilde{K}^{-1}(X) := \widetilde{K}^0(\Sigma X)$, and set $\widetilde{K}^*(X) := \widetilde{K}^0( X) \oplus \widetilde{K}^{-1}( X)$. We regard $\widetilde{K}^*(X)$ and $\widetilde{K}_*(X)$ as being $\mathbb{Z}/2$-graded. It is shown in \cite{AtiyahHirzebruch} that $\widetilde{K}^*(X)$ is a $\mathbb{Z}/2$-graded ring.

We will wish to work with $K$-theory and $K$-homology modulo the torsion subgroup. For a space $X$, write $\widetilde{K}_*^{\mathrm{TF}}(X)$ and $\widetilde{K}^*_{\mathrm{TF}}(X)$ for the quotients of the reduced $K$-homology and $K$-theory of $X$ by their torsion subgroups. The same convention applies in the unreduced case.

\subsection{K\"unneth and universal coefficient theorems}

The universal coefficient theorem for $K$-theory first appears in some unpublished lecture notes of Anderson \cite{Anderson}, and is first published by Yosimura \cite{Yosimura}.

\begin{theorem}[Universal coefficient theorem] \label{originalUCT} For any $CW$-complex $X$ and each integer $n$ there is a short exact sequence
\[ \pushQED{\qed} 
0 \to \mathrm{Ext}(K_{n-1}(X), \mathbb{Z}) \to K^n(X) \to \mathrm{Hom}(K_n(X), \mathbb{Z}) \to 0. \qedhere
\popQED
\]
\end{theorem}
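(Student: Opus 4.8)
The plan is to recover this short exact sequence as the (necessarily degenerate) output of a length-one free resolution over the coefficient ring, which is Anderson's original argument. The one non-formal input is purely algebraic: the ring $KU_{*} \cong \mathbb{Z}[\beta,\beta^{-1}]$ with $|\beta|=2$ has graded global dimension $1$. Indeed, since $\beta$ acts invertibly a graded $KU_{*}$-module is the same data as a $\mathbb{Z}/2$-graded abelian group, and $\mathbb{Z}$ is a principal ideal domain, so every graded $KU_{*}$-module $M$ admits a resolution $0 \to P_{1} \to P_{0} \to M \to 0$ by graded free modules. I would apply this with $M = K_{*}(X)$, the $K$-homology of $X$ viewed as a graded $KU_{*}$-module.

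Next I would realize this resolution topologically, working in the homotopy category of module spectra over the ring spectrum $KU$. The spectrum $KU \wedge X$ is a $KU$-module with $\pi_{*} = K_{*}(X)$, and for any $KU$-module $N$ one has $\pi_{*}\,\mathrm{Map}_{KU}\!\bigl(\bigvee_{\alpha}\Sigma^{n_{\alpha}}KU,\, N\bigr) \cong \mathrm{Hom}_{KU_{*}}\!\bigl(\bigoplus_{\alpha}\Sigma^{n_{\alpha}}KU_{*},\, \pi_{*}N\bigr)$. Hence the surjection $P_{0}\twoheadrightarrow K_{*}(X)$ is induced by a $KU$-module map $\bigvee_{\alpha}\Sigma^{n_{\alpha}}KU \to KU \wedge X$; its fibre has homotopy the free module $P_{1}$, so it receives a $KU$-module map from $\bigvee_{\beta}\Sigma^{m_{\beta}}KU$ which is an isomorphism on homotopy and hence an equivalence. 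This produces a cofibre sequence of $KU$-modules $\bigvee_{\beta}\Sigma^{m_{\beta}}KU \to \bigvee_{\alpha}\Sigma^{n_{\alpha}}KU \to KU \wedge X$.

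Applying $\mathrm{Map}_{KU}(-,KU)$ then yields a fibre sequence. By the free--forgetful adjunction $\mathrm{Map}_{KU}(KU\wedge X, KU)$ is the function spectrum $F(X,KU)$, with $\pi_{*}F(X,KU) = K^{-*}(X)$, while $\mathrm{Map}_{KU}\!\bigl(\bigvee\Sigma^{k}KU, KU\bigr) \simeq \prod\Sigma^{-k}KU$ has homotopy computing $\mathrm{Hom}_{KU_{*}}$ of the relevant free module into $KU_{*}$. The map between the two product terms is $\mathrm{Hom}_{KU_{*}}(P_{0},KU_{*}) \to \mathrm{Hom}_{KU_{*}}(P_{1},KU_{*})$, dual to $P_{1}\hookrightarrow P_{0}$, with kernel $\mathrm{Hom}_{KU_{*}}(K_{*}(X),KU_{*})$ and cokernel $\mathrm{Ext}^{1}_{KU_{*}}(K_{*}(X),KU_{*})$ by the very definition of these derived functors from a free resolution, so the long exact homotopy sequence breaks into short exact sequences
$$0 \to \mathrm{Ext}^{1}_{KU_{*}}(K_{*-1}(X),KU_{*}) \to K^{*}(X) \to \mathrm{Hom}_{KU_{*}}(K_{*}(X),KU_{*}) \to 0,$$
the degree shift coming from the suspension in the cofibre sequence. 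Unwinding the equivalence between graded $KU_{*}$-modules and $\mathbb{Z}/2$-graded abelian groups finally identifies the outer terms in internal degree $n$ with $\mathrm{Ext}(K_{n-1}(X),\mathbb{Z})$ and $\mathrm{Hom}(K_{n}(X),\mathbb{Z})$ respectively, which is the statement. (Equivalently, this is the collapse at $E_{2}$ of the universal coefficient spectral sequence $E_{2}^{s,t} = \mathrm{Ext}^{s}_{KU_{*}}(K_{*}(X),KU_{*})_{t} \Rightarrow K^{t-s}(X)$, the two surviving columns assembling into the same sequence.)

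The genuinely substantive point is the remark on global dimension; everything after it is bookkeeping. The step most in need of care is the topological realization of the resolution, which invokes that $KU$ is a homotopy-ring spectrum robust enough that $KU\wedge X$ and wedges of shifted copies of $KU$ are $KU$-modules, that maps out of such wedges into a $KU$-module are computed by $\mathrm{Hom}_{KU_{*}}$ on homotopy, and that a $KU$-module map inducing an isomorphism on homotopy is an equivalence. I would also note that finiteness of $X$ is not required: $\mathrm{Map}_{KU}(-,KU)$ carries infinite wedges to products and $\pi_{*}$ commutes with products, so no $\lim^{1}$ term intervenes.
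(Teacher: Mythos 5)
The paper does not actually prove Theorem \ref{originalUCT}: it quotes the statement with references to Anderson's unpublished lecture notes and Yosimura's paper. Your argument is a correct modern (module-spectrum) rendering of exactly that standard proof --- use that $KU_*$ has graded global dimension $1$ to get a length-one free resolution of $K_*(X)$, realize it as a cofibre sequence of $KU$-modules, and apply $\mathrm{Map}_{KU}(-,KU)$ --- so it matches the cited source rather than departing from anything in the paper.
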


In the torsion-free case, the universal coefficient theorem is as follows, where, unsurprisingly, we write $\mathrm{Ext}(\widetilde{K}_{n-1}(X), \mathbb{Z})^{\mathrm{TF}}$ for the quotient of $\mathrm{Ext}(\widetilde{K}_{n-1}(X), \mathbb{Z})$ by its torsion subgroup.

\begin{corollary} \label{usableUCTs} \begin{enumerate}
    \item For any $CW$-complex $X$ and each integer $n$ there is a short exact sequence $$0 \to \mathrm{Ext}(\widetilde{K}_{n-1}(X), \mathbb{Z})^{\mathrm{TF}} \to \widetilde{K}^n_{\mathrm{TF}}(X) \to \mathrm{Hom}(\widetilde{K}_n^{\mathrm{TF}}(X), \mathbb{Z}) \to 0.$$
    \item If $X$ is a finite $CW$-complex, then $\mathrm{Ext}(\widetilde{K}_{n-1}(X), \mathbb{Z})^{\mathrm{TF}} = 0$, and we obtain an isomorphism $  \widetilde{K}^n_{\mathrm{TF}}(Y) \xrightarrow{\cong} \mathrm{Hom}(\widetilde{K}_n^{\mathrm{TF}}(Y), \mathbb{Z}). $
\end{enumerate}
\end{corollary}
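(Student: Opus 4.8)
The plan is to obtain both parts of the corollary from Theorem \ref{originalUCT} by systematically discarding torsion. First I would record the reduced form of Theorem \ref{originalUCT}: since the basepoint inclusion splits both $K^*$ and $K_*$ as the reduced theory plus a trivial summand carried by a point, the unreduced universal coefficient sequence decomposes as a direct sum, and the reduced summand gives, for each $n$, a short exact sequence
\[ 0 \to \mathrm{Ext}(\widetilde{K}_{n-1}(X), \mathbb{Z}) \to \widetilde{K}^n(X) \to \mathrm{Hom}(\widetilde{K}_n(X), \mathbb{Z}) \to 0. \]
Two elementary remarks drive the rest. First, $\mathrm{Hom}(\widetilde{K}_n(X), \mathbb{Z})$ is torsion-free, and since any homomorphism to $\mathbb{Z}$ annihilates torsion, the quotient map $\widetilde{K}_n(X) \to \widetilde{K}_n^{\mathrm{TF}}(X)$ induces an isomorphism $\mathrm{Hom}(\widetilde{K}_n^{\mathrm{TF}}(X), \mathbb{Z}) \xrightarrow{\cong} \mathrm{Hom}(\widetilde{K}_n(X), \mathbb{Z})$. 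Second, because the rightmost term of the sequence is torsion-free, the torsion subgroup of $\widetilde{K}^n(X)$ is contained in the image of the injection $\mathrm{Ext}(\widetilde{K}_{n-1}(X), \mathbb{Z}) \hookrightarrow \widetilde{K}^n(X)$, and coincides with the image of the torsion subgroup of $\mathrm{Ext}(\widetilde{K}_{n-1}(X), \mathbb{Z})$.

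Next I would invoke the general fact that for a short exact sequence of abelian groups $0 \to A \to B \to C \to 0$ with $C$ torsion-free the inclusion $A \hookrightarrow B$ restricts to an isomorphism on torsion subgroups, so that passing to torsion-free quotients yields an exact sequence $0 \to A^{\mathrm{TF}} \to B^{\mathrm{TF}} \to C \to 0$; the only point needing checking is that $A^{\mathrm{TF}} \to B^{\mathrm{TF}}$ remains injective, which is exactly the torsion computation recorded above. Applying this with $A = \mathrm{Ext}(\widetilde{K}_{n-1}(X), \mathbb{Z})$, $B = \widetilde{K}^n(X)$ and $C = \mathrm{Hom}(\widetilde{K}_n(X), \mathbb{Z})$, and then rewriting $C = \mathrm{Hom}(\widetilde{K}_n^{\mathrm{TF}}(X), \mathbb{Z})$, $B^{\mathrm{TF}} = \widetilde{K}^n_{\mathrm{TF}}(X)$ and $A^{\mathrm{TF}} = \mathrm{Ext}(\widetilde{K}_{n-1}(X), \mathbb{Z})^{\mathrm{TF}}$ as in the conventions of the previous section, gives part (1).

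For part (2), I would use that a finite $CW$-complex has finitely generated reduced $K$-homology (for instance from the Atiyah--Hirzebruch spectral sequence, whose $E^2$-page is finitely generated and concentrated in finitely many total degrees). For any finitely generated abelian group $G$, the group $\mathrm{Ext}(G, \mathbb{Z})$ is isomorphic to the torsion subgroup of $G$ and hence finite, so $\mathrm{Ext}(\widetilde{K}_{n-1}(X), \mathbb{Z})$ is all torsion and $\mathrm{Ext}(\widetilde{K}_{n-1}(X), \mathbb{Z})^{\mathrm{TF}} = 0$. Feeding this into the sequence of part (1) collapses it to the asserted isomorphism $\widetilde{K}^n_{\mathrm{TF}}(X) \xrightarrow{\cong} \mathrm{Hom}(\widetilde{K}_n^{\mathrm{TF}}(X), \mathbb{Z})$.

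There is no serious obstacle here — the whole argument is careful bookkeeping with torsion subgroups. The one delicate point is that quotienting the universal coefficient sequence by torsion keeps it left exact, and this rests entirely on $\mathrm{Hom}(\widetilde{K}_n(X), \mathbb{Z})$ being torsion-free; I would likely isolate the general short-exact-sequence statement as a one-line sublemma so that this dependence is transparent.
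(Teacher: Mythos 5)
Your proposal is correct and takes essentially the same route as the paper: both arguments show that the first map of the universal coefficient sequence restricts to an isomorphism of torsion subgroups (using that $\mathrm{Hom}(-,\mathbb{Z})$ is torsion-free to force the torsion of $\widetilde{K}^n(X)$ into the $\mathrm{Ext}$ term), then pass to torsion-free quotients, and deduce part (2) from the fact that $\widetilde{K}_{n-1}(X)$ is finitely generated for finite $X$, so $\mathrm{Ext}(\widetilde{K}_{n-1}(X),\mathbb{Z})$ is torsion. The only differences are cosmetic: you make explicit the reduced/unreduced splitting and the identification $\mathrm{Hom}(\widetilde{K}_n^{\mathrm{TF}}(X),\mathbb{Z}) \cong \mathrm{Hom}(\widetilde{K}_n(X),\mathbb{Z})$, both of which the paper leaves implicit.
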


\begin{proof} Let $T_1$ denote the torsion subgroup of $\mathrm{Ext}(\widetilde{K}_{n-1}(X), \mathbb{Z})$, and let $T_2$ be the torsion subgroup of $\widetilde{K}^n(X)$. The Universal Coefficient Sequence of Theorem \ref{originalUCT} gives an injection $T_1 \to \widetilde{K}^n(X)$, which must have image contained in $T_2$, thus lift to an injection $T_1 \to T_2$. For any group $G$, $\mathrm{Hom}(G, \mathbb{Z})$ is torsion-free, so the composite $T_2 \to \widetilde{K}^n(X) \to \mathrm{Hom}(K_n(X), \mathbb{Z})$ is trivial, and by exactness we obtain a lift $T_2 \to \mathrm{Ext}(\widetilde{K}_{n-1}(X), \mathbb{Z})$. The image of this map must be torsion, which is to say that it must be contained in $T_1$, so the aforementioned map $T_1 \to T_2$ is a surjection. That is, the Universal Coefficient Sequence of Theorem \ref{originalUCT} has last term torsion-free and first map restricting to an isomorphism of torsion subgroups. This implies the first statement.

For the second statement, we need only note that if $X$ is finite, then $\widetilde{K}_{n-1}^{\mathrm{TF}}(X)$ is finitely generated, so $\mathrm{Ext}(\widetilde{K}_{n-1}(X), \mathbb{Z})$ is torsion, as required. \end{proof}

Selick \cite{Selick} deduces the following from work of Atiyah \cite{Atiyah}, Mislin \cite{Mislin} and Adams \cite{Adams}.

\begin{theorem}[K\"unneth theorem for K-homology] \label{KKun} Let $X$ and $Y$ be of the homotopy type of finite complexes. Then there is an isomorphism of $\mathbb{Z}/2$-graded  $\mathbb{Z}$-modules:
\[ \pushQED{\qed} 
\widetilde{K}_*^{\mathrm{TF}}(X \wedge Y) \cong \widetilde{K}_*^{\mathrm{TF}}(X) \otimes \widetilde{K}_*^{\mathrm{TF}}(Y). \qedhere
\popQED
\]
\end{theorem}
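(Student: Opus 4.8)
The plan is to prove the companion Künneth statement in $K$-\emph{cohomology} first, and then dualize it to $K$-homology by means of the universal coefficient theorem of Corollary \ref{usableUCTs}. Since $X$ and $Y$ have the homotopy type of finite complexes, so does $X \wedge Y$, and hence all the $K$-groups in play are finitely generated $\mathbb{Z}/2$-graded abelian groups (for instance by the Atiyah--Hirzebruch spectral sequence, which has only finitely many nonzero entries); in particular the torsion-free quotients $\widetilde{K}_*^{\mathrm{TF}}(-)$ and $\widetilde{K}^*_{\mathrm{TF}}(-)$ are finitely generated and free, as is the $\mathbb{Z}/2$-graded tensor product of two such. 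It therefore suffices to produce an isomorphism respecting the $\mathbb{Z}/2$-grading.

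First I would invoke Atiyah's Künneth theorem for the $K$-cohomology of finite complexes, which relates $\widetilde{K}^*(X \wedge Y)$ to $\widetilde{K}^*(X) \otimes \widetilde{K}^*(Y)$ with correction terms assembled from $\mathrm{Tor}^{\mathbb{Z}}(\widetilde{K}^*(X), \widetilde{K}^*(Y))$ (a short exact sequence of Atiyah for finite complexes, possibly most cleanly when one factor has torsion-free $K$-theory, and a spectral sequence in general). Because $\mathrm{Tor}^{\mathbb{Z}}_{>0}$ of finitely generated abelian groups is torsion, reducing modulo torsion leaves only the tensor product, giving an isomorphism $\widetilde{K}^*_{\mathrm{TF}}(X \wedge Y) \cong \widetilde{K}^*_{\mathrm{TF}}(X) \otimes \widetilde{K}^*_{\mathrm{TF}}(Y)$ of $\mathbb{Z}/2$-graded free abelian groups. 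I would then dualize: by Corollary \ref{usableUCTs}, applied to each of the finite complexes $X \wedge Y$, $X$ and $Y$, we have $\widetilde{K}^n_{\mathrm{TF}}(Z) \cong \mathrm{Hom}(\widetilde{K}_n^{\mathrm{TF}}(Z), \mathbb{Z})$, and since $\mathrm{Hom}(-,\mathbb{Z})$ of finitely generated free modules turns tensor products of such modules into tensor products, the cohomological isomorphism becomes $\mathrm{Hom}(\widetilde{K}_*^{\mathrm{TF}}(X \wedge Y), \mathbb{Z}) \cong \mathrm{Hom}(\widetilde{K}_*^{\mathrm{TF}}(X) \otimes \widetilde{K}_*^{\mathrm{TF}}(Y), \mathbb{Z})$. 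Taking $\mathbb{Z}$-duals once more --- an equivalence on finitely generated free abelian groups --- yields the desired $\widetilde{K}_*^{\mathrm{TF}}(X \wedge Y) \cong \widetilde{K}_*^{\mathrm{TF}}(X) \otimes \widetilde{K}_*^{\mathrm{TF}}(Y)$, compatibly with the $\mathbb{Z}/2$-grading.

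As a sanity check, for the abstract statement exactly as phrased it would even be enough to match ranks in $\mathbb{Z}/2$-degrees $0$ and $1$: after tensoring with $\mathbb{Q}$ and using the Chern character to identify $\widetilde{K}_*(-)\otimes\mathbb{Q}$ with rational homology graded modulo $2$, the rank equality follows from the ordinary Künneth theorem over $\mathbb{Q}$ by sorting $\widetilde{H}_i(X;\mathbb{Q})\otimes \widetilde{H}_j(Y;\mathbb{Q})$ according to the parity of $i+j$. The genuinely non-formal input is Atiyah's Künneth theorem in $K$-cohomology (this is where the cited work of Atiyah, refined by Mislin and Adams, enters); once that is granted, the rest is linear algebra over $\mathbb{Z}$ together with the universal coefficient theorem already in hand. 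Accordingly, the step I expect to need the most care is not any single ingredient but the bookkeeping: ensuring that the cohomological Künneth isomorphism is grading-preserving, that dualization correctly interchanges the homological and cohomological $\mathbb{Z}/2$-gradings, and that the finiteness hypotheses are used exactly where needed --- to obtain finite generation, to kill the $\mathrm{Tor}$ and $\mathrm{Ext}$ terms modulo torsion, and to apply Corollary \ref{usableUCTs} in its second (isomorphism) form.
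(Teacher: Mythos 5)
The paper offers no proof of this statement: it is quoted from Selick, who assembles it from work of Atiyah, Mislin and Adams, so your reconstruction via Atiyah's K\"unneth sequence in $K$-cohomology followed by dualization through the universal coefficient theorem is exactly the expected route. The statement as literally phrased --- an unspecified isomorphism of $\mathbb{Z}/2$-graded $\mathbb{Z}$-modules between finitely generated free modules --- does follow from your argument, and in fact already from your ``sanity check'': equality of ranks via the Chern character and the rational K\"unneth theorem.

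The one step to push back on is the claim that ``reducing modulo torsion leaves only the tensor product.'' From a short exact sequence $0 \to A \otimes B \to C \to \mathrm{Tor}(A,B) \to 0$ of finitely generated abelian groups one gets, on torsion-free quotients, only an \emph{injection with finite cokernel}: the purely algebraic model $0 \to \mathbb{Z} \xrightarrow{\times 2} \mathbb{Z} \to \mathbb{Z}/2 \to 0$ shows that the induced map of torsion-free quotients need not be onto. So your middle isomorphism $\widetilde{K}^*_{\mathrm{TF}}(X \wedge Y) \cong \widetilde{K}^*_{\mathrm{TF}}(X) \otimes \widetilde{K}^*_{\mathrm{TF}}(Y)$ is not a formal consequence of the K\"unneth sequence alone. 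For the abstract statement this is rescued by the rank count, but note that the paper later uses Theorem \ref{KKun} as if the isomorphism were the one induced by the external product --- see Remark \ref{sphGenChoice}, Lemma \ref{getCommutator} and Lemma \ref{GetTensor}, where compatibility with $S^n \wedge S^m \cong S^{n+m}$ and with the loop multiplication is essential. To get that stronger, natural form one needs an additional argument that the external product is surjective modulo torsion rather than merely of finite index; that refinement is precisely what the citation to Selick is carrying, and your write-up should either supply it or state explicitly that only the abstract isomorphism has been proved.
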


\begin{remark} \label{sphGenChoice} It follows immediately from Corollary \ref{usableUCTs} (and knowledge of $\widetilde{K}^*(S^q)$) that $\widetilde{K}_*^{\mathrm{TF}}(S^q) \cong \mathbb{Z}$. We write $\xi_q$ for the generator of $\widetilde{K}_*^{\mathrm{TF}}(S^q)$. By the K\"unneth Theorem (Theorem \ref{KKun}), we may choose the $\xi_q$ so that $\xi_n \otimes \xi_m$ is identified with $\xi_{n+m}$ under the homeomorphism $S^n \wedge S^m \cong S^{n+m}$.
\end{remark}

In the case of $K$-theory, the analogous result follows directly from \cite{Adams}.

\begin{theorem}[K\"unneth theorem for K-theory] \label{KUpKun} Let $X$ and $Y$ be of the homotopy type of finite complexes. Then the external product on $K$-theory defines an isomorphism of $\mathbb{Z}/2$-graded rings:
$$ \widetilde{K}^*_{\mathrm{TF}}(X) \otimes \widetilde{K}^*_{\mathrm{TF}}(Y) \xrightarrow{\cong} \widetilde{K}^*_{\mathrm{TF}}(X \wedge Y).$$

\end{theorem}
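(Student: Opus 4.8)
The plan is to deduce this from Adams' Künneth short exact sequence for $K$-theory \cite{Adams}, which for finite complexes $X$ and $Y$ (so that $X \wedge Y$ is again finite) reads, in reduced form,
$$0 \to \widetilde{K}^*(X) \otimes \widetilde{K}^*(Y) \xrightarrow{\ \times\ } \widetilde{K}^*(X \wedge Y) \to \mathrm{Tor}(\widetilde{K}^*(X), \widetilde{K}^*(Y)) \to 0,$$
where the first map is the external product. Since $X$ and $Y$ are finite, $\widetilde{K}^*(X)$ and $\widetilde{K}^*(Y)$ are finitely generated, so the $\mathrm{Tor}$ term is a finite group; and since the torsion of a tensor product of finitely generated abelian groups comes entirely from the torsion of the factors, $(\widetilde{K}^*(X) \otimes \widetilde{K}^*(Y))^{\mathrm{TF}} \cong \widetilde{K}^*_{\mathrm{TF}}(X) \otimes \widetilde{K}^*_{\mathrm{TF}}(Y)$. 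The first step is therefore to pass to torsion-free quotients throughout the sequence; this already exhibits the external product $\widetilde{K}^*_{\mathrm{TF}}(X) \otimes \widetilde{K}^*_{\mathrm{TF}}(Y) \to \widetilde{K}^*_{\mathrm{TF}}(X\wedge Y)$ as an injection with finite cokernel.

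The point I would flag as the main obstacle is upgrading this injection to an isomorphism: ``modding out the torsion makes the $\mathrm{Tor}$ term disappear'' is slightly too fast, since an extension of the torsion group $\mathrm{Tor}(\widetilde{K}^*(X),\widetilde{K}^*(Y))$ by $\widetilde{K}^*(X)\otimes\widetilde{K}^*(Y)$ can shrink the free part, as in $0 \to \mathbb{Z} \xrightarrow{\ \times 2\ } \mathbb{Z} \to \mathbb{Z}/2 \to 0$. There are two ways around this. One is to invoke the fact that the Künneth sequence splits (unnaturally) as a sequence of abelian groups, just as the classical homology Künneth sequence does; then $\widetilde{K}^*(X\wedge Y)$ is abstractly $(\widetilde{K}^*(X)\otimes\widetilde{K}^*(Y)) \oplus \mathrm{Tor}(\widetilde{K}^*(X),\widetilde{K}^*(Y))$, and passing to torsion-free quotients presents the external product as the inclusion of a direct summand. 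The other — which stays inside the results already assembled in this paper, and which I would prefer — is to dualize the $K$-homology Künneth theorem: since $X$, $Y$, and $X\wedge Y$ are finite, Corollary \ref{usableUCTs}(2) identifies each $\widetilde{K}^*_{\mathrm{TF}}(-)$ with $\mathrm{Hom}(\widetilde{K}_*^{\mathrm{TF}}(-),\mathbb{Z})$, the groups $\widetilde{K}_*^{\mathrm{TF}}(X)$ and $\widetilde{K}_*^{\mathrm{TF}}(Y)$ are finitely generated free, so $\mathrm{Hom}(\widetilde{K}_*^{\mathrm{TF}}(X)\otimes \widetilde{K}_*^{\mathrm{TF}}(Y),\mathbb{Z}) \cong \mathrm{Hom}(\widetilde{K}_*^{\mathrm{TF}}(X),\mathbb{Z}) \otimes \mathrm{Hom}(\widetilde{K}_*^{\mathrm{TF}}(Y),\mathbb{Z})$, and Theorem \ref{KKun} supplies $\widetilde{K}_*^{\mathrm{TF}}(X\wedge Y) \cong \widetilde{K}_*^{\mathrm{TF}}(X)\otimes\widetilde{K}_*^{\mathrm{TF}}(Y)$. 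Composing these four identifications gives the isomorphism $\widetilde{K}^*_{\mathrm{TF}}(X\wedge Y) \cong \widetilde{K}^*_{\mathrm{TF}}(X) \otimes \widetilde{K}^*_{\mathrm{TF}}(Y)$ with no index issue.

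What then remains is formal. One must check that the isomorphism just produced is the external product map: this reduces to the compatibility of the external products on $K$-theory and $K$-homology with the Kronecker pairings $\widetilde{K}^*_{\mathrm{TF}}(Z) \otimes \widetilde{K}_*^{\mathrm{TF}}(Z) \to \mathbb{Z}$ that underlie Corollary \ref{usableUCTs}(2), the fact that the isomorphism of Theorem \ref{KKun} is itself induced by the external coproduct, and the usual $\mathbb{Z}/2$-graded sign bookkeeping. One must also check that the external product is a ring homomorphism when the source carries the graded tensor-product ring structure, which is the standard multiplicativity of the external product. Together these give the stated isomorphism of $\mathbb{Z}/2$-graded rings; the only step that is not pure formalism is the vanishing of the cokernel in the previous paragraph, which uses either the splitting of the Künneth sequence or, equivalently, the clean behaviour of $\mathrm{Hom}(-,\mathbb{Z})$ on finitely generated free groups.
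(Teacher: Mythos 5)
Your argument is correct, but it is worth saying up front that the paper offers no proof of this statement at all: it simply asserts that the result ``follows directly from \cite{Adams}'', i.e.\ from the K\"unneth short exact sequence you quote. So there is no argument of the paper's to compare against, and what you have written is a genuine filling-in of the details. The most valuable thing you do is isolate the point that makes ``follows directly'' too quick: an injection of finitely generated abelian groups with finite torsion cokernel need not become an isomorphism after passing to torsion-free quotients (your $\mathbb{Z} \xrightarrow{\times 2} \mathbb{Z}$ example), so one cannot simply delete the $\mathrm{Tor}$ term. Of your two repairs, the second is preferable, as you say: it dualizes Theorem \ref{KKun} through Corollary \ref{usableUCTs}(2) and the isomorphism $\mathrm{Hom}(A\otimes B,\mathbb{Z}) \cong \mathrm{Hom}(A,\mathbb{Z})\otimes\mathrm{Hom}(B,\mathbb{Z})$ for finitely generated free $A,B$, so it stays entirely within results the paper has already assembled; whereas the first repair invokes a splitting of the K-theoretic K\"unneth sequence which, unlike the classical homology case, is not automatic (the quotient term is finite torsion rather than free, so no splitting comes for free) and would itself need a reference. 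The remaining checks you list --- compatibility of the cohomology and homology external products with the Kronecker pairing underlying Corollary \ref{usableUCTs}(2), the fact that the isomorphism of Theorem \ref{KKun} is induced by the external product on $K$-homology (true in Selick's construction, but worth noting as an input since the paper states it only as an abstract module isomorphism), and multiplicativity --- are indeed formal. One cosmetic slip: ``external coproduct'' should read ``external product on $K$-homology''.
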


\subsection{The James construction}

For a space $X$, let $X^s$ denote the product of $s$ copies of $X$. Let $\sim$ be the relation on $X^s$ defined by $$(x_1, \dots ,x_{i-1}, * , x_{i+1}, x_{i+2}, \dots x_{s}) \sim (x_1, \dots , x_{i-1} , x_{i+1}, * , x_{i+2} , \dots x_{s}).$$
Let $J_s(X)$ be the space $\faktor{X^s}{\sim}$. There is a natural inclusion $$J_s(X) \xhookrightarrow{} J_{s+1}(X)$$ $$(x_1, \dots , x_s) \mapsto (x_1, \dots, x_s, *).$$

The \emph{James construction} $JX$ is defined to be the colimit of the diagram consisting of the spaces $J_s(X)$ and the above inclusions. Write $i_s : J_s(X) \to JX$ for the map associated to the colimit. Notice that $JX$ carries a product given by concatenation, which makes it into the free topological monoid on $X$, and that a topological monoid is in particular an $H$-space.

Let $X^{\wedge i}$ denote the smash product of $i$ copies of $X$, and let $\eta : X \to \Omega \Sigma X$ be the unit of the adjunction $\Sigma \dashv \Omega$. Explicitly, $\eta(x) = (t \mapsto \langle x, t \rangle \in \Sigma X)$.

\begin{theorem}{\cite{James}} \label{JamesConstruction}
\begin{enumerate}
    \item There is a homotopy equivalence $JX \xrightarrow{\simeq} \Omega \Sigma X$ which respects the $H$-space structures and identifies $i_1$ with $\eta$. 
    \item There is a homotopy equivalence $ \bigvee_{i=1}^\infty \Sigma X^{\wedge i} \xrightarrow{\simeq} \Sigma JX$ which restricts to a homotopy equivalence $\bigvee_{i=1}^s \Sigma X^{\wedge i} \xrightarrow{\simeq} \Sigma J_s(X)$ for each $s \in \mathbb{Z}^+$. \qed
\end{enumerate}
\end{theorem}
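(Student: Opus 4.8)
The plan is to recognise this as James's original theorem \cite{James} and to organise its proof around two pieces: a homology computation feeding into Whitehead's theorem for part (1), and a splitting of the James filtration after a single suspension for part (2). I will treat part (2) first, since its homological shadow is what makes part (1) work.

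For part (2), the key input is that the suspension of the quotient map $J_s(X)\to J_s(X)/J_{s-1}(X)\cong X^{\wedge s}$ admits a section (a ``James--Hopf'' map). Granting this, the cofibration $\Sigma J_{s-1}(X)\to\Sigma J_s(X)\to\Sigma X^{\wedge s}$ is split, so $\Sigma J_s(X)\simeq\Sigma J_{s-1}(X)\vee\Sigma X^{\wedge s}$: indeed the wedge map $\Sigma J_{s-1}(X)\vee\Sigma X^{\wedge s}\to\Sigma J_s(X)$ built from the filtration inclusion and the section is a homology isomorphism by the (now split) long exact sequence of the cofibration, and both spaces are simply connected suspensions, so Whitehead's theorem applies. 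Since $J_1(X)=X$, induction on $s$ gives $\Sigma J_s(X)\simeq\bigvee_{i=1}^s\Sigma X^{\wedge i}$ compatibly with the filtration inclusions, and passing to the colimit gives the unfiltered statement.

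For part (1), first replace $\Omega\Sigma X$ by the (equivalent, as $H$-spaces) Moore loop space, which is a genuine topological monoid. As $JX$ is the free topological monoid on $X$ with identity the basepoint, $\eta:X\to\Omega\Sigma X$ extends uniquely to a monoid map $\phi:JX\to\Omega\Sigma X$ with $\phi\circ i_1=\eta$, and $\phi$ respects the $H$-space structures by construction. To see that $\phi$ is an equivalence, compute homology over an arbitrary field $\mathbb{F}$. By part (2) (and the suspension isomorphism), $\widetilde{H}_*(JX;\mathbb{F})\cong\bigoplus_{s\ge1}\widetilde{H}_*(X^{\wedge s};\mathbb{F})$; using the K\"unneth theorem and the fact that the concatenation product restricts on associated graded to the canonical map $X^{\wedge a}\wedge X^{\wedge b}\to X^{\wedge(a+b)}$, this ring is the tensor algebra $T\big(\widetilde{H}_*(X;\mathbb{F})\big)$. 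On the other side, the Bott--Samelson theorem (proved via the Serre spectral sequence of the path-loop fibration over $\Sigma X$, so as to avoid circularity) gives $H_*(\Omega\Sigma X;\mathbb{F})\cong T\big(\widetilde{H}_*(X;\mathbb{F})\big)$. Since $\phi$ is a map of algebras inducing the identity in the generating degrees coming from $X$ (because $\phi\circ i_1=\eta$), it is an isomorphism on $\mathbb{F}$-homology for every $\mathbb{F}$, hence on integral homology; as $JX$ and $\Omega\Sigma X$ are path-connected $H$-spaces they are simple, so Whitehead's theorem upgrades $\phi$ to a homotopy equivalence, which by construction respects the $H$-space structures and identifies $i_1$ with $\eta$.

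The main obstacle is the construction of the section of $\Sigma(J_s(X)\to X^{\wedge s})$ used in part (2) --- equivalently, showing that the James filtration splits stably after exactly one suspension, compatibly with the filtration. One route is the explicit combinatorial James--Hopf construction; another is to apply the naturality of the product splitting $\Sigma(A\times B)\simeq\Sigma A\vee\Sigma B\vee\Sigma(A\wedge B)$ iteratively to $X^s$, the delicate point being to track how the identifications $\sim$ defining $J_s(X)$ collapse the resulting wedge onto $\bigvee_{i=1}^s\Sigma X^{\wedge i}$. Everything else --- the homology bookkeeping, the identification of the two tensor algebras, and the invocations of Whitehead's theorem --- is routine.
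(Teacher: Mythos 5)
The paper does not actually prove this statement: it is quoted from James's original work \cite{James} with a \qed\ and used as a black box, so there is no internal proof to compare against. Your outline is the standard modern argument for the cited theorem and is essentially correct: part (2) via splitting the suspended James filtration $\Sigma J_{s-1}(X)\to\Sigma J_s(X)\to\Sigma X^{\wedge s}$ and induction, part (1) via the universal property of the free topological monoid (after replacing $\Omega\Sigma X$ by Moore loops), Bott--Samelson, and a Whitehead-theorem comparison of the two tensor algebras. Two caveats. First, as you acknowledge, essentially all of the content of part (2) sits in the step you defer, the construction of a section of $\Sigma J_s(X)\to\Sigma X^{\wedge s}$ compatible with the filtration; until that is supplied the argument is a reduction rather than a proof. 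It is worth noting that the paper's Section \ref{KFacts} builds precisely the machinery for your second suggested route --- the natural splittings $\delta_{X,Y}$ of $\Sigma(X\times Y)\to\Sigma(X\wedge Y)$ and their iterates --- and Lemma \ref{GetTensor} records the compatibility of the resulting equivalence with the loop multiplication that your part (1) then exploits. Second, your argument silently uses that $X$ is path-connected (simple connectivity of the suspensions for Whitehead's theorem, and indeed for $JX\to\Omega\Sigma X$ to have any chance of being an equivalence, since for disconnected $X$ a group completion intervenes); this is harmless because the paper only applies the theorem to path-connected $X$, but it should be stated as a hypothesis. The passage from $\mathbb{F}$-homology isomorphisms for all prime fields and $\mathbb{Q}$ to an integral homology isomorphism is fine: apply the universal coefficient theorem to the homology of the mapping cone to see that it is a divisible torsion-free group killed by rationalization.
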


\begin{lemma}{\cite[Lemma 7]{Selick}} \label{jTrunk} Let $X$ have the homotopy type of an $(r-1)$-connected $CW$-complex.

\begin{enumerate}
    \item $(i_s)_* : \pi_N(J_s(X)) \to \pi_N(JX)$ is an isomorphism for $N < r(s+1)-1$.
    \item Let $x \in \pi_N(J_s(X))$ for any $N$. If $\Sigma x$ is nontrivial then $(i_s)_*(x)$ is also nontrivial.
\end{enumerate}
\end{lemma}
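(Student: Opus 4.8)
The plan is to prove the two parts separately, in each case using the filtration of $JX$ by the subspaces $J_s(X)$ together with the relevant half of Theorem~\ref{JamesConstruction}.

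For part (1) I would first invoke the standard fact that the James filtration has successive subquotients $J_{s+1}(X)/J_s(X) \simeq X^{\wedge(s+1)}$; if one prefers not to cite this, the suspension-level statement $\Sigma\bigl(J_{s+1}(X)/J_s(X)\bigr) \simeq \Sigma X^{\wedge(s+1)}$ already follows from Theorem~\ref{JamesConstruction}(2), and by the suspension isomorphism this pins down $\widetilde{H}_*\bigl(J_{s+1}(X)/J_s(X)\bigr)$, which is all we need. Since $X$ is $(r-1)$-connected, a short induction using the fact that $A \wedge B$ is $(a+b+1)$-connected when $A$ is $a$-connected and $B$ is $b$-connected shows that $X^{\wedge(s+1)}$ is $\bigl((s+1)r-1\bigr)$-connected, whence $H_i\bigl(J_{s+1}(X),J_s(X)\bigr) = 0$ for $i \leq (s+1)r-1$. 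The relative Hurewicz theorem then upgrades this to $\pi_i\bigl(J_{s+1}(X),J_s(X)\bigr) = 0$ in the same range, so the long exact sequence of the pair shows that the inclusion $J_s(X) \hookrightarrow J_{s+1}(X)$ is an isomorphism on $\pi_N$ for $N < (s+1)r-1$. Finally I would pass to the colimit: for every $s' \geq s$ the corresponding bound $(s'+1)r-1$ is at least $r(s+1)-1$, so in the range $N < r(s+1)-1$ each map $\pi_N(J_{s'}(X)) \to \pi_N(J_{s'+1}(X))$ is an isomorphism; since $JX = \operatorname{colim}_{s'} J_{s'}(X)$ along closed $CW$-inclusions and $S^N$ is compact, $\pi_N(JX) = \operatorname{colim}_{s'} \pi_N(J_{s'}(X))$, a colimit of isomorphisms from stage $s$ onwards, and the claim follows.

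For part (2) the key point is that Theorem~\ref{JamesConstruction}(2) realises $\Sigma i_s : \Sigma J_s(X) \to \Sigma JX$, after the two displayed equivalences, as the inclusion of the wedge summands $\bigvee_{i=1}^s \Sigma X^{\wedge i} \hookrightarrow \bigvee_{i=1}^\infty \Sigma X^{\wedge i}$; collapsing the complementary summands gives a retraction $\rho : \Sigma JX \to \Sigma J_s(X)$ with $\rho \circ \Sigma i_s \simeq \mathrm{id}$. Hence $(\Sigma i_s)_*$ is injective on homotopy groups, so if $(i_s)_*(x) = 0$ in $\pi_N(JX)$ then $(\Sigma i_s)_*(\Sigma x) = \Sigma\bigl((i_s)_*(x)\bigr) = 0$, which forces $\Sigma x = 0$. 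Contrapositively, $\Sigma x \neq 0$ implies $(i_s)_*(x) \neq 0$, which is exactly the assertion.

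Granting the two halves of Theorem~\ref{JamesConstruction} and the connectivity of smash powers, everything above is formal except for one point, which I expect to be the main obstacle: the relative Hurewicz step in part (1) wants the pair $(J_{s+1}(X),J_s(X))$ to be simply connected, which holds automatically when $r \geq 2$ but not when $r=1$, where the $J_s(X)$ are only path connected. In that case one must check the low-dimensional behaviour by hand; the James relation forces $\pi_1(J_2(X)) \to \pi_1(JX)$ to be the abelianisation of $\pi_1(X)$, so the stated range $N < r(s+1)-1$ remains correct, but making this rigorous for all $s$ requires a little extra care with the Hurewicz theorem for pairs with nontrivial fundamental group (alternatively, one can replace the relative Hurewicz argument by a fibration-theoretic one, or simply note that in the applications of this paper one works at least one suspension above a connected space, so $r \geq 2$).
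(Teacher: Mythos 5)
Your part (2) is exactly the paper's argument: Theorem \ref{JamesConstruction}(2) exhibits $\Sigma i_s$, up to the displayed equivalences, as the inclusion of wedge summands, hence a map admitting a retraction, and the contrapositive follows. For part (1), however, you take a genuinely different route. The paper's proof is a one-line cellular approximation argument: taking a CW model of $X$ whose cells (other than the basepoint) all have dimension $\geq r$, every cell of $JX$ not lying in $J_s(X)$ comes from a word of length at least $s+1$ in positive-dimensional cells of $X$ and so has dimension $\geq r(s+1)$; thus $J_s(X)$ contains the $\bigl(r(s+1)-1\bigr)$-skeleton of $JX$, and a skeletal inclusion is automatically an isomorphism on $\pi_N$ below the skeletal dimension. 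Your route via the filtration quotients $J_{s'+1}(X)/J_{s'}(X)\simeq X^{\wedge(s'+1)}$, the connectivity of smash powers, the relative Hurewicz theorem, and a colimit over the James filtration is correct for $r\geq 2$ and is a perfectly reasonable alternative; the connectivity count and the passage to the colimit are both right.

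The weak point you flag at $r=1$ is, however, a real gap and your proposed dismissal of it is not available here: Theorem \ref{K-detection} assumes only that $X$ is path connected, and the James construction in Sections \ref{sectionMain} is applied to $X$ itself (not to $\Sigma X$), so the lemma is invoked with $r=1$ (see Corollary \ref{RestrictDgrm} and Lemma \ref{lastStageInjection}, both stated for $r\geq 1$). Your relative Hurewicz step needs the pair $\bigl(J_{s'+1}(X),J_{s'}(X)\bigr)$ to be simply connected, or at least a controlled $\pi_1$-action, and when $r=1$ the spaces $J_{s'}(X)$ need not be simply connected; patching this by hand for all $s'$ is exactly the sort of bookkeeping the cellular argument avoids, since cellular approximation imposes no simple-connectivity hypothesis. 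So either adopt the skeleton argument for part (1), or restrict your Hurewicz argument to $N\geq 2$ after passing to a situation where the $\pi_1$-issues are under control; as written, the $r=1$ case of part (1) is not established.
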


\begin{proof} The first part follows by cellular approximation from the observation that $J_s(X)$ contains the $(r(s+1)-1)$-skeleton of $JX$. The second part follows from the observation that $\Sigma i_s$ has a retraction by Theorem \ref{JamesConstruction}. \end{proof}

For spaces $X$ and $Y$, let $X * Y$ denote the \emph{join}, which we define to be the homotopy pushout of the projections $X \times Y \to X$ and $X \times Y \to Y$. The join is naturally a quotient of $X \times I \times Y$, where $I$ denotes the unit interval. Following the treatment in \cite{Arkowitz}, let $C_1$ denote the subspace of $X * Y$ consisting of points of the form $(x, t, *)$, for $t \in I$ and $x \in X$, and let $C_2$ be the subspace consisting of points of the form $(*, t, y)$. The subspace $C_1 \cup C_2 \cong CX \cup CY$ is contractible, so the quotient map $q: X * Y \to \faktor{X * Y}{C_1 \cup C_2}$ is a homotopy equivalence. The quotient $\faktor{X * Y}{C_1 \cup C_2}$ is homeomorphic to $\Sigma X \wedge Y$. The suspended product $\Sigma (X \times Y)$ is also a quotient of $X \times I \times Y$, and this quotient lies between $X * Y$ and $\faktor{X * Y}{C_1 \cup C_2}$.

This gives a factorization of $q$ as $X * Y \to \Sigma(X \times Y) \to \Sigma (X \wedge Y)$. Let $q^{-1}$ denote any choice of homotopy inverse to $q$; all possible choices are homotopic. We may form a new map $\delta_{X,Y}$ as the composite $\Sigma (X \wedge Y) \xrightarrow{q^{-1}} X * Y \to \Sigma(X \times Y)$. It is automatic that $\delta_{X,Y}$ splits the quotient map $\pi: \Sigma (X \times Y) \to \Sigma X \wedge Y$. The homotopy class of $\delta_{X,Y}$ is well-defined, and we will call $\delta_{X,Y}$ the \emph{canonical splitting} of $\pi$. Note that $\delta_{X,Y}$ is natural in maps of spaces in the sense that given $f : A \to X$ and $g: B \to Y$ we obtain a commutative diagram\begin{center}
\begin{tabular}{c}
\xymatrix{
\Sigma A \wedge B \ar[r]^{\delta_{A,B}} \ar[d]^{\Sigma (f \wedge g)} & \Sigma (A \times B) \ar[d]^{\Sigma (f \times g)} \\
\Sigma X \wedge Y \ar[r]^{\delta_{X,Y}} & \Sigma (X \times Y).
}
\end{tabular}
\end{center}

For $s \geq 3$, consider the quotient map $\Sigma X^s \to \Sigma X^{\wedge s}$. We define the \emph{canonical splitting} of this quotient to be the composite of canonical splittings $$\Sigma X^{\wedge s} \to \Sigma (X \times X) \wedge X^{\wedge (s - 2)} \to \Sigma ((X \times X) \times X) \wedge X^{\wedge (s - 3)} \to \dots \to \Sigma X^s.$$ Of course, we chose an order of multiplication here. This canonical splitting is natural as before.

\begin{definition} \label{tensorAlgebraDef} For a $\mathbb{Z}$-graded (respectively $\mathbb{Z}/2$-graded) module $M$, let $T(M) = \bigoplus_{k=1}^\infty M^{\otimes k}$ denote the \emph{tensor algebra} on $M$. The product is given by concatenation. We refer to $M^{\otimes k}$ as the \emph{weight $k$ component} of the tensor algebra $T(M)$. We define a $\mathbb{Z}$-grading (respectively $\mathbb{Z}/2$-grading) on $T(M)$ by setting $|x_1 \otimes x_2 \otimes \dots \otimes x_k| = \sum_{i=1}^k |x_i|$. \end{definition}

\begin{definition} \label{desuspension} For a space $Y$, let $\sigma: \widetilde{K}^\mathrm{TF}_*(Y) \xrightarrow{\cong} \widetilde{K}^\mathrm{TF}_{*+1}(\Sigma Y)$ be the suspension isomorphism. Let $\varphi: \widetilde{K}^\mathrm{TF}_{*}(\Sigma Y) \to \widetilde{K}^\mathrm{TF}_{*}(\Sigma Y)$ be a homomorphism of graded groups, not necessarily induced by a map of spaces. We call the composite $\sigma^{-1} \circ \varphi \circ \sigma$ the \emph{desuspension} of $\theta$, denoting it by $S^{-1} \varphi$. \end{definition}

Write $m_s: (\Omega \Sigma X)^s \to \Omega \Sigma X$ for the map given by iteratively performing the standard loop multiplication on $\Omega \Sigma X$ in any choice of order. Up to homotopy, $m_s$ is independent of this choice of order, since $\Omega \Sigma X$ is homotopy associative. 

Theorem \ref{JamesConstruction} gives the existence of a homotopy equivalence $\Jdec:\bigvee_{i=1}^\infty \Sigma X^{\wedge i} \to \Sigma \Omega \Sigma X$. There are many choices of $\Jdec$, up to homotopy. The next lemma asserts that $\Jdec$ can be chosen in a way which suits our purpose. Selick \cite{Selick} describes the composite $\bigvee_{i=1}^\infty \Sigma X^{\wedge i} \xrightarrow{\Jdec} \Sigma \Omega \Sigma X \xrightarrow{\simeq} \Sigma JX$ of $\Jdec$ with the homotopy equivalence of Theorem \ref{JamesConstruction} (1). This immediately implies the following description of $\Jdec$.

\begin{lemma}{\cite{Selick}} \label{GetTensor} Let $X$ be a finite $CW$-complex. The homotopy equivalence $\Jdec:\bigvee_{i=1}^\infty \Sigma X^{\wedge i} \to \Sigma \Omega \Sigma X$ may be chosen such that: 
\begin{enumerate}
    \item $ S^{-1} (\Jdec_*):T(\widetilde{K}^{\mathrm{TF}}_*(X)) \xrightarrow{\cong} K^{\mathrm{TF}}_*(\Omega \Sigma X)$ is an isomorphism of algebras;
    \item the restriction of $\Jdec$ to $\Sigma X^{\wedge s}$ is homotopic to the composite $$
\Sigma X^{\wedge s} \to \Sigma X^s \xrightarrow{\Sigma (\eta)^s} \Sigma (\Omega \Sigma X)^s \xrightarrow{\Sigma m_s} \Sigma \Omega \Sigma X,$$ where the unlabelled arrow is the canonical splitting.
\end{enumerate}
\end{lemma}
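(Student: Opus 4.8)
The plan is to assemble $\Jdec$ from the two parts of Theorem \ref{JamesConstruction} and then to read off properties (1) and (2) by naturality. Let $\alpha\colon JX\xrightarrow{\simeq}\Omega\Sigma X$ be the $H$-equivalence of Theorem \ref{JamesConstruction}(1); recall that $\alpha\circ i_1\simeq\eta$ and that $\alpha$ carries the concatenation product on $JX$ to the loop multiplication on $\Omega\Sigma X$, so $\alpha\circ m_s^{J}\simeq m_s\circ\alpha^{s}$ for the $s$-fold products, where $m_s^{J}$ denotes $s$-fold concatenation on $JX$. From \cite{Selick} one has a James splitting $c\colon\bigvee_{i\geq 1}\Sigma X^{\wedge i}\xrightarrow{\simeq}\Sigma JX$ whose restriction to $\Sigma X^{\wedge s}$ is the composite
$$\Sigma X^{\wedge s}\to\Sigma X^{s}\xrightarrow{\Sigma(i_1^{s})}\Sigma(JX)^{s}\xrightarrow{\Sigma m_s^{J}}\Sigma JX,$$
the first arrow being the canonical splitting. (If one prefers not to cite this: since $m_s^{J}\circ i_1^{s}\colon X^{s}\to JX$ factors through $J_s(X)$ and becomes the quotient $X^{s}\to X^{\wedge s}$ after composing with the collapse $J_s(X)\to J_s(X)/J_{s-1}(X)=X^{\wedge s}$, the composite $\Sigma X^{\wedge s}\to\Sigma X^{s}\to\Sigma J_s(X)$ is a section of $\Sigma J_s(X)\to\Sigma X^{\wedge s}$; an induction on $s$ using Theorem \ref{JamesConstruction}(2) and the split cofibre sequences $\Sigma J_{s-1}(X)\to\Sigma J_s(X)\to\Sigma X^{\wedge s}$ shows these sections assemble into an equivalence.) Define $\Jdec:=\Sigma\alpha\circ c$, which is an equivalence since $c$ is.

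Property (2) is then formal. Restricting to $\Sigma X^{\wedge s}$ and using $i_1\simeq\alpha^{-1}\circ\eta$ together with $\alpha\circ m_s^{J}\simeq m_s\circ\alpha^{s}$, the composite $\Sigma\alpha\circ\Sigma m_s^{J}\circ\Sigma(i_1^{s})$ is homotopic to $\Sigma m_s\circ\Sigma(\eta^{s})$, so
$$\Jdec|_{\Sigma X^{\wedge s}}\simeq\Sigma m_s\circ\Sigma(\eta)^{s}\circ(\text{canonical splitting}),$$
which is exactly the composite in the statement.

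For property (1) I would apply $\widetilde{K}^{\mathrm{TF}}_*(-)$ and desuspend. Via the suspension isomorphism and the K\"unneth theorem (Theorem \ref{KKun}), the source $\widetilde{K}^{\mathrm{TF}}_*(\bigvee_{i\geq 1}\Sigma X^{\wedge i})$ is identified with $T(\widetilde{K}^{\mathrm{TF}}_*(X))=\bigoplus_{i\geq 1}\widetilde{K}^{\mathrm{TF}}_*(X)^{\otimes i}$. By property (2), $S^{-1}\bigl((\Jdec|_{\Sigma X^{\wedge s}})_*\bigr)=(m_s)_*\circ(\eta^{s})_*\circ S^{-1}\bigl((\text{canonical splitting})_*\bigr)$, and three standard facts combine: (i) the canonical splitting of $\Sigma A^{s}\to\Sigma A^{\wedge s}$ is the inclusion of the top wedge summand in the stable splitting $\Sigma A^{s}\simeq\bigvee_{\varnothing\neq T\subseteq\{1,\dots,s\}}\Sigma A^{\wedge T}$, so on $\widetilde{K}^{\mathrm{TF}}_*$ it is the inclusion of the top K\"unneth summand $\widetilde{K}^{\mathrm{TF}}_*(X)^{\otimes s}\hookrightarrow\widetilde{K}^{\mathrm{TF}}_*(X^{s})$ with vanishing lower components; (ii) $(\eta^{s})_*=\eta_*^{\otimes s}$ on that summand, by naturality of K\"unneth; (iii) $(m_s)_*$ restricted to the top K\"unneth summand $\widetilde{K}^{\mathrm{TF}}_*(\Omega\Sigma X)^{\otimes s}$ of $\widetilde{K}^{\mathrm{TF}}_*((\Omega\Sigma X)^{s})$ is the $s$-fold Pontryagin product. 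Hence $S^{-1}(\Jdec_*)$ sends $\alpha_1\otimes\cdots\otimes\alpha_s$ to the Pontryagin product $\eta_*(\alpha_1)\cdots\eta_*(\alpha_s)$; this formula is visibly multiplicative for the concatenation product on $T(\widetilde{K}^{\mathrm{TF}}_*(X))$ and the Pontryagin product on $K^{\mathrm{TF}}_*(\Omega\Sigma X)$, and bijective because $\Jdec$ is an equivalence, so it is an isomorphism of algebras.

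The only step with genuine content is securing the multiplicative form of the James splitting used in the first paragraph — that the equivalence of Theorem \ref{JamesConstruction}(2) may be chosen with restriction to $\Sigma X^{\wedge s}$ equal to ($s$-fold multiplication)$\circ$(canonical splitting); this is what \cite{Selick} supplies, with the inductive argument as a fallback. Everything else is naturality and bookkeeping with the K\"unneth theorem and the behaviour of the canonical splitting and the multiplication maps on K\"unneth summands.
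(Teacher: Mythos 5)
Your construction $\Jdec=\Sigma\alpha\circ c$ and the derivation of (2) from the multiplicative form of Selick's James splitting, followed by the K\"unneth/Pontryagin bookkeeping for (1), is exactly the route the paper takes: it simply cites Selick's description of the composite $\bigvee_{i}\Sigma X^{\wedge i}\to\Sigma JX$ and remarks that the lemma follows, whereas you have written out the verification (including a correct fallback induction for the splitting itself). The argument is correct; in particular your observation that the canonical splitting includes the top K\"unneth summand with vanishing lower components is the key point that makes $S^{-1}(\Jdec_*)$ multiplicative.
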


The description of the map $\Jdec$ in Lemma \ref{GetTensor} has the following consequence. For a space $Y$, let $\mathrm{ev}: \Sigma \Omega Y \to Y$ be the evaluation map, which may be described explicitly by $\mathrm{ev}( \langle \gamma , t \rangle ) = \gamma(t)$ for $\gamma \in \Omega Y$.

\begin{lemma} \label{evaluationMap}
Let $\Jdec:\bigvee_{i=1}^\infty \Sigma X^{\wedge i} \to \Sigma \Omega \Sigma X$ be the homotopy equivalence of Lemma \ref{GetTensor}. The composite $\mathrm{ev} \circ \Jdec$ is homotopic to the projection onto the first wedge summand.
\end{lemma}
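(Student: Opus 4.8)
The plan is to compare $\mathrm{ev}\circ\Jdec$ with the projection onto the first wedge summand one summand at a time. Since a map out of a wedge of CW complexes is determined up to homotopy by its restrictions to the wedge summands, it suffices to show that the restriction of $\mathrm{ev}\circ\Jdec$ to $\Sigma X^{\wedge 1}=\Sigma X$ is homotopic to $\mathrm{id}_{\Sigma X}$, and that its restriction to $\Sigma X^{\wedge s}$ is nullhomotopic for every $s\geq 2$.

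By Lemma \ref{GetTensor}(2), the restriction of $\mathrm{ev}\circ\Jdec$ to $\Sigma X^{\wedge s}$ is homotopic to the composite $\Sigma X^{\wedge s}\xrightarrow{c_s}\Sigma X^s\xrightarrow{\Sigma\eta^s}\Sigma(\Omega\Sigma X)^s\xrightarrow{\Sigma m_s}\Sigma\Omega\Sigma X\xrightarrow{\mathrm{ev}}\Sigma X$, where $c_s$ denotes the canonical splitting. I would first analyse the tail $\mathrm{ev}\circ\Sigma m_s\circ\Sigma\eta^s\colon\Sigma X^s\to\Sigma X$ via the adjunction $\Sigma\dashv\Omega$. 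Writing $p_i\colon X^s\to X$ for the coordinate projections, the map $m_s\circ\eta^s\colon X^s\to\Omega\Sigma X$ is the pointwise loop product $(\eta\circ p_1)\cdots(\eta\circ p_s)$, using homotopy-associativity of $\Omega\Sigma X$. The operation $f\mapsto\mathrm{ev}\circ\Sigma f$ is the inverse of the adjunction bijection $[\Sigma Z,\Sigma X]\cong[Z,\Omega\Sigma X]$, and it carries pointwise loop multiplication to the co-$H$ (pinch) addition on $[\Sigma Z,\Sigma X]$; moreover $\mathrm{ev}\circ\Sigma\eta=\mathrm{id}_{\Sigma X}$ directly from the formulas for $\mathrm{ev}$ and $\eta$, so $\mathrm{ev}\circ\Sigma(\eta\circ p_i)\simeq\Sigma p_i$. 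Hence $\mathrm{ev}\circ\Sigma m_s\circ\Sigma\eta^s\simeq\Sigma p_1+\dots+\Sigma p_s$, and so the restriction of $\mathrm{ev}\circ\Jdec$ to $\Sigma X^{\wedge s}$ is homotopic to $(\Sigma p_1+\dots+\Sigma p_s)\circ c_s$. For $s=1$ this is $\Sigma p_1\circ c_1\simeq\mathrm{id}_{\Sigma X}$, since $c_1=\mathrm{id}$ and $p_1=\mathrm{id}_X$, which is exactly the restriction of the projection onto the first summand.

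For $s\geq 2$ it remains to see that $\Sigma p_i\circ c_s\simeq *$ for each $i$. Unwinding the recursive definition of the canonical splitting, its final stage is $\delta_{X^{\times(s-1)},X}\colon\Sigma(X^{\times(s-1)}\wedge X)\to\Sigma(X^{\times(s-1)}\times X)=\Sigma X^s$, and each $p_i$ factors through one of the two projections $X^{\times(s-1)}\times X\to X^{\times(s-1)}$ and $X^{\times(s-1)}\times X\to X$. So everything reduces to the single fact that, for any spaces $A,B$, the maps $\Sigma(A\times B)\xrightarrow{\Sigma\,\mathrm{proj}}\Sigma A$ and $\Sigma(A\times B)\xrightarrow{\Sigma\,\mathrm{proj}}\Sigma B$ become nullhomotopic after precomposition with $\delta_{A,B}$. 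This follows from the join construction of $\delta_{A,B}$: it factors through the map $A*B\to\Sigma(A\times B)$, and $\Sigma(\mathrm{proj}_A)$ composed with this map depends only on the $A$-coordinate and the suspension parameter, hence factors through the collapse $A*B\to A*\{*\}$; but $A*\{*\}$ is a cone on $A$, hence contractible (and symmetrically for $\mathrm{proj}_B$). Therefore $(\Sigma p_1+\dots+\Sigma p_s)\circ c_s$ is a pinch sum of nullhomotopic maps, hence nullhomotopic, which finishes the comparison.

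I expect the main obstacle to be careful bookkeeping rather than anything deep: making precise the claim that $m_s\circ\eta^s$ is the pointwise loop product of the $\eta\circ p_i$ and that $\mathrm{ev}\circ\Sigma(-)$ converts this product into the pinch sum (i.e. verifying it is an isomorphism of the relevant monoid structures on homotopy classes), and correctly unwinding the iterated definition of the canonical splitting $c_s$ so as to identify its last stage and to see how each $p_i$ factors through a projection off $X^{\times(s-1)}\times X$. The only genuinely geometric step is the cone argument showing $\Sigma(\mathrm{proj})\circ\delta_{A,B}\simeq *$, and it is short.
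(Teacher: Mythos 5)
Your proof is correct, but it takes a genuinely different route from the paper's. The paper handles $s=1$ exactly as you do (the triangle identity $\mathrm{ev}\circ\Sigma\eta\simeq 1_{\Sigma X}$), but for $s\geq 2$ it invokes Ganea's theorem identifying the homotopy fibre of $\mathrm{ev}$ as $\Sigma(\Omega\Sigma X\wedge\Omega\Sigma X)\xrightarrow{v}\Sigma\Omega\Sigma X$, where $v$ is $\Sigma m_2$ precomposed with the canonical splitting, and then shows by a diagram chase (homotopy associativity $m_2\circ m_{s-1}\simeq m_s$ plus naturality of the canonical splitting) that $\Jdec\circ\iota_s$ factors through $v$, hence dies under $\mathrm{ev}$. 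You instead compute the composite head-on: the adjunction isomorphism $[\Sigma Z,\Sigma X]\cong[Z,\Omega\Sigma X]$ is a group isomorphism between the pinch-sum and loop-product structures, so $\mathrm{ev}\circ\Sigma(m_s\circ\eta^s)\simeq\Sigma p_1+\dots+\Sigma p_s$, and each $\Sigma p_i$ composes trivially with the canonical splitting because the last stage $\delta_{X^{s-1},X}$ followed by either suspended projection factors through the contractible join $A*\{*\}\simeq CA$. Both arguments are sound. Yours is more elementary and self-contained (no appeal to Ganea's fibration theorem) and makes visible the standard fact that the $\Sigma(A\wedge B)$ summand of $\Sigma(A\times B)$ maps trivially under both suspended projections; the cost is the extra bookkeeping of verifying the monoid-structure compatibility of the adjunction and unwinding the iterated splitting. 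The paper's argument is shorter once Ganea's theorem is granted and needs only naturality of the splitting. (The one point worth a sentence in a write-up is that the free nullhomotopy obtained from contractibility of $A*\{*\}$ gives a based one, which is harmless here since the target is a simply connected suspension.)
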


\begin{proof} Let $\iota_s : \Sigma X^{\wedge s} \to \bigvee_{i=1}^\infty \Sigma X^{\wedge i}$ be the inclusion of the $s$-th wedge summand. We must show that $$\mathrm{ev} \circ \Jdec \circ \iota_s \simeq \begin{cases} 1_{\Sigma X} & \textrm{ if } s=1 \textrm{, and}\\
* & \textrm{ otherwise.}
\end{cases}$$

The following diagram commutes up to homotopy

\begin{center}
\begin{tabular}{c}
\xymatrix{
\Sigma X \ar[r]^{\Sigma \eta} \ar[dr]_{1_{\Sigma X}} & \Sigma \Omega \Sigma X \ar[d]^{\mathrm{ev}} \\
& \Sigma X.
}
\end{tabular}
\end{center} By Lemma \ref{GetTensor}, $\Sigma \eta = \Jdec \circ \iota_1$, which implies the $s=1$ statement.

Now let $s\geq 2$. Ganea \cite[Theorems 1.1 and 1.4]{Ganea} shows that the homotopy fibre of $\mathrm{ev}$ is given by $$ \Sigma (\Omega \Sigma X \wedge \Omega \Sigma X) \xrightarrow{v} \Sigma \Omega \Sigma X \xrightarrow{\mathrm{ev}} \Sigma X,$$ where the map $v$ is equal to the composite $$ \Sigma (\Omega \Sigma X \wedge \Omega \Sigma X) \rightarrow \Sigma (\Omega \Sigma X \times \Omega \Sigma X) \xrightarrow{\Sigma m_2}  \Sigma \Omega \Sigma X$$ of $\Sigma m_2$ with the canonical splitting. We will show that $\Jdec \circ \iota_s$ factors through $v$, and hence composes trivially with $\mathrm{ev}$. Consider the following diagram, where the unlabelled arrows are all canonical splittings:

\begin{center}
\begin{tabular}{c}
\xymatrix{
& \Sigma (\Omega \Sigma X \wedge \Omega \Sigma X) \ar[r] \ar@/^1pc/[rr]^{v} & \Sigma (\Omega \Sigma X \times \Omega \Sigma X) \ar[r]_(.6){\Sigma m_2} & \Sigma \Omega \Sigma X \\
\Sigma (\Omega \Sigma X)^{\wedge s} \ar[r] & \Sigma ((\Omega \Sigma X)^{(s-1)} \wedge \Omega \Sigma X) \ar[u]^{\Sigma (m_{(s-1)} \wedge 1)} \ar[r] & \Sigma (\Omega \Sigma X)^{s}  \ar[u]^{\Sigma (m_{(s-1)} \times 1)} \ar[r]_{\Sigma m_s} & \Sigma \Omega \Sigma X \ar@{=}[u] \\
\Sigma X^{\wedge s} \ar[u]^{\Sigma \eta ^{\wedge s}} \ar[r] & \Sigma (X^{(s-1)} \wedge X) \ar[u]^{\Sigma (\eta^{(s-1)} \wedge \eta)} \ar[r] & \Sigma X^{s}. \ar[u]^{\Sigma \eta ^{s}}& 
}
\end{tabular}
\end{center}

The composite along the bottom of the diagram is $\Jdec \circ \iota_s$, so to obtain the desired factorization of $\Jdec \circ \iota_s$ through $v$, it suffices to show that the diagram commutes up to homotopy.

The top right square commutes because $m_2 \circ m_{(s-1)} \simeq m_s$, by homotopy associativity of the $H$-space $\Omega \Sigma X$. The remaining three squares commute by naturality of our canonical splitting. This completes the proof. \end{proof}

Let $\rho_k$ be the projection $T(\widetilde{K}^\mathrm{TF}_{*}(X)) \rightarrow \widetilde{K}^\mathrm{TF}_{*}(X)^{\otimes k}.$ The next corollary is immediate from Lemma \ref{evaluationMap}.

\begin{corollary} \label{desuspendedEvaluation} $S^{-1} (\mathrm{ev}_* \circ \Jdec_*) = \rho_1 : T(\widetilde{K}^\mathrm{TF}_{*}(X)) \to \widetilde{K}^\mathrm{TF}_{*}(X).$ \qed \end{corollary}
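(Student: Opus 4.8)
The plan is to deduce the corollary by applying reduced torsion-free $K$-homology to the homotopy equivalence of Lemma \ref{evaluationMap} and then desuspending; there is essentially nothing to prove beyond unwinding the identifications. Write $\mathrm{pr}_1 : \bigvee_{i=1}^\infty \Sigma X^{\wedge i} \to \Sigma X$ for the projection onto the first wedge summand. Lemma \ref{evaluationMap} asserts that $\mathrm{ev} \circ \Jdec \simeq \mathrm{pr}_1$, and hence $\mathrm{ev}_* \circ \Jdec_* = (\mathrm{pr}_1)_*$ on $\widetilde{K}^{\mathrm{TF}}_*(\ \ )$. It therefore suffices to show that $S^{-1}\big((\mathrm{pr}_1)_*\big) = \rho_1$.

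First I would pin down the source. By the wedge axiom, the suspension isomorphism $\sigma$, and the K\"unneth theorem for $K$-homology (Theorem \ref{KKun}), there is an identification
\[
\widetilde{K}^{\mathrm{TF}}_*\Big(\bigvee_{i=1}^\infty \Sigma X^{\wedge i}\Big) \;\cong\; \bigoplus_{i=1}^\infty \sigma\big(\widetilde{K}^{\mathrm{TF}}_*(X)^{\otimes i}\big) \;=\; \sigma\big(T(\widetilde{K}^{\mathrm{TF}}_*(X))\big).
\]
This is precisely the identification used (implicitly) in the statement of Lemma \ref{GetTensor}: under it, the weight-$k$ component of the tensor algebra corresponds to the $k$-th wedge summand $\Sigma X^{\wedge k}$ (suspended once), and $S^{-1}(\Jdec_*)$ becomes the asserted algebra isomorphism.

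Next I would compute $(\mathrm{pr}_1)_*$. Since $\mathrm{pr}_1$ restricts to the identity on the wedge summand $\Sigma X^{\wedge 1} = \Sigma X$ and is nullhomotopic on each $\Sigma X^{\wedge i}$ with $i \neq 1$, the induced map on $\widetilde{K}^{\mathrm{TF}}_*$ is the identity on the summand $\sigma\big(\widetilde{K}^{\mathrm{TF}}_*(X)^{\otimes 1}\big)$ and zero on all the others; that is, $(\mathrm{pr}_1)_* = \sigma \circ \rho_1 \circ \sigma^{-1}$, with $\rho_1 : T(\widetilde{K}^{\mathrm{TF}}_*(X)) \to \widetilde{K}^{\mathrm{TF}}_*(X)$ the projection onto the weight-$1$ component. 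Desuspending gives $S^{-1}\big((\mathrm{pr}_1)_*\big) = \sigma^{-1} \circ (\mathrm{pr}_1)_* \circ \sigma = \rho_1$, and combining with the first paragraph yields $S^{-1}(\mathrm{ev}_* \circ \Jdec_*) = \rho_1$, as required.

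The only point requiring care --- and the closest thing to an obstacle --- is the consistency of bookkeeping: one must check that the splitting of $\widetilde{K}^{\mathrm{TF}}_*\big(\bigvee_i \Sigma X^{\wedge i}\big)$ into wedge summands agrees, under the suspension and K\"unneth isomorphisms, with the grading of $T(\widetilde{K}^{\mathrm{TF}}_*(X))$ by weight, so that ``projection onto the first wedge summand'' genuinely desuspends to the weight-$1$ projection $\rho_1$. This is immediate because both structures are induced by the same isomorphisms, already fixed in Lemma \ref{GetTensor}.
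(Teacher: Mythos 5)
Your proposal is correct and is exactly the unwinding the paper intends: the paper declares the corollary immediate from Lemma \ref{evaluationMap}, and your argument simply applies $\widetilde{K}^{\mathrm{TF}}_*$ to the homotopy $\mathrm{ev}\circ\Jdec\simeq\mathrm{pr}_1$ and checks that the first-wedge-summand projection desuspends to $\rho_1$ under the identifications of Lemma \ref{GetTensor}. No discrepancies with the paper's (omitted) proof.
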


\subsection{Primitives and commutators}

It follows from the K\"unneth Theorem (Theorem \ref{KKun}), and the fact that $\Sigma (Y \times Y) \simeq \Sigma Y \vee \Sigma Y \vee \Sigma (Y \wedge Y),$ that $K^{\mathrm{TF}}_*(Y \times Y) \cong K_*^{\mathrm{TF}}(Y) \otimes K_*^{\mathrm{TF}}(Y)$. We may therefore make the following definition. A class $y \in \widetilde{K}^{\mathrm{TF}}_*(Y)$ is called \emph{primitive} if $\Delta_*(y) = y \otimes 1 + 1 \otimes y$, where $\Delta: Y \to Y \times Y$ is the diagonal, defined by $\Delta(y)=(y,y)$.

The comultiplication $Y \to Y \vee Y$ on a co-$H$-space $Y$ is a factorization of $\Delta$ via the inclusion $Y \vee Y \xhookrightarrow{} Y \times Y$. From this point of view, the following lemma is immediate.

\begin{lemma} \label{coHprim} If $Y$ is a co-$H$-space, then all elements in $\widetilde{K}^{\mathrm{TF}}_*(Y)$ are primitive. \qed
\end{lemma}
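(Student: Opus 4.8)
The plan is to use the factorization of the diagonal through the comultiplication, the fact that reduced $K$-homology sends wedges to direct sums, and the counitality of a co-$H$ structure.

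First I would fix a co-$H$ comultiplication $c : Y \to Y \vee Y$. By definition of a co-$H$-space, composing $c$ with either pinch map $p_1, p_2 : Y \vee Y \to Y$ (collapsing the other wedge summand) yields a map homotopic to $\mathrm{id}_Y$. Since reduced $K$-homology carries wedges to direct sums, $\widetilde{K}^{\mathrm{TF}}_*(Y \vee Y) \cong \widetilde{K}^{\mathrm{TF}}_*(Y) \oplus \widetilde{K}^{\mathrm{TF}}_*(Y)$, and the projections of this direct sum onto its two factors are precisely $(p_1)_*$ and $(p_2)_*$. Applying both to $c_*(y)$ gives $c_*(y) = (y,y)$ for every $y \in \widetilde{K}^{\mathrm{TF}}_*(Y)$.

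Next I would identify the map $\iota_*$ induced by the inclusion $\iota : Y \vee Y \hookrightarrow Y \times Y$. Restricted to the first wedge summand, $\iota$ is the inclusion $Y \cong Y \times * \hookrightarrow Y \times Y$, and restricted to the second it is $* \times Y \hookrightarrow Y \times Y$. Under the Künneth identification $K^{\mathrm{TF}}_*(Y \times Y) \cong K^{\mathrm{TF}}_*(Y) \otimes K^{\mathrm{TF}}_*(Y)$ recalled just before the lemma, these two inclusions induce $a \mapsto a \otimes 1$ and $b \mapsto 1 \otimes b$ respectively, where $1 \in K^{\mathrm{TF}}_0(\mathrm{pt})$ is the unit; hence $\iota_*(a,b) = a \otimes 1 + 1 \otimes b$. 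Since $\Delta \simeq \iota \circ c$, this yields $\Delta_*(y) = \iota_*(c_*(y)) = \iota_*(y,y) = y \otimes 1 + 1 \otimes y$, which is exactly primitivity.

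The one point to handle carefully is the bookkeeping with the reduced and unreduced splittings: one must check that the Künneth identification is compatible with the decomposition coming from $\Sigma(Y \times Y) \simeq \Sigma Y \vee \Sigma Y \vee \Sigma(Y \wedge Y)$, so that the two summand inclusions $\widetilde{K}^{\mathrm{TF}}_*(Y) \to K^{\mathrm{TF}}_*(Y \times Y)$ really are $- \otimes 1$ and $1 \otimes -$. Beyond this routine compatibility there is no genuine obstacle, which is why the statement is labelled immediate.
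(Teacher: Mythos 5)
Your argument is correct and is exactly the paper's intended proof: the paper simply observes that the diagonal factors through the wedge inclusion via the comultiplication and calls the lemma immediate, while you have spelled out the two standard facts (the pinch maps force $c_*(y)=(y,y)$, and the wedge inclusion induces $(a,b)\mapsto a\otimes 1 + 1\otimes b$ under the K\"unneth identification) that make it so. Nothing is missing.
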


If $Y$ is an $H$-group, then the multiplication $m: Y \times Y \to Y$ induces a map $\widetilde{K}^{\mathrm{TF}}_*(Y) \otimes \widetilde{K}^{\mathrm{TF}}_*(Y) \to \widetilde{K}^{\mathrm{TF}}_*(Y)$. We will denote this map by juxtaposition, so that $m_*(y_1 \otimes y_2) = y_1 y_2$. Furthermore, the commutator $Y \times Y \to Y$ descends to a map $c: Y \wedge Y \to Y$. Expanding the definition of the commutator in terms of the $K$-homology K\"unneth Theorem (Theorem \ref{KKun}) gives the following lemma.

\begin{lemma} \label{getCommutator} Let $Y$ be an $H$-group, and let $c: Y \wedge Y \to Y$ be the commutator. If  $y_1$ and $y_2 \in \widetilde{K}^{\mathrm{TF}}_*(Y)$ are primitive, then $c_*(y_1 \otimes y_2) = y_1 y_2 - (-1)^{|y_1 | |y_2 |}y_2 y_1$. \qed
\end{lemma}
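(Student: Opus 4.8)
The plan is to realise the commutator $\gamma \colon Y \times Y \to Y$, $(a,b) \mapsto aba^{-1}b^{-1}$, as an explicit composite of standard maps and to push $y_1 \otimes y_2$ through it using the K\"unneth theorem (Theorem~\ref{KKun}), which identifies $K^{\mathrm{TF}}_*(Y^n)$ with $K^{\mathrm{TF}}_*(Y)^{\otimes n}$. Writing $\chi \colon Y \to Y$ for the inversion, $\tau$ for the transposition of the two middle coordinates of $Y^4$, and $m_4 \colon Y^4 \to Y$ for the $4$-fold multiplication, one has $\gamma \simeq m_4 \circ (1 \times 1 \times \chi \times \chi) \circ (1 \times \tau \times 1) \circ (\Delta \times \Delta)$, and $c$ is recovered from $\gamma$ by factoring through the quotient $Y \times Y \to Y \wedge Y$.

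First I would record the one input beyond bookkeeping: that $\chi_*$ negates primitive classes. This follows by applying $K$-homology to the $H$-group identity $m \circ (1 \times \chi) \circ \Delta \simeq \ast$ and using that a primitive $y$ is reduced, which forces $y + \chi_*(y) = 0$. Then I would compute in turn: $(\Delta \times \Delta)_*(y_1 \otimes y_2) = (y_1 \otimes 1 + 1 \otimes y_1) \otimes (y_2 \otimes 1 + 1 \otimes y_2)$, using primitivity and naturality of the K\"unneth isomorphism; expanding produces four monomials in which every cross term carries the degree-$0$ class $1$, so no Koszul signs have yet appeared; $(1 \times \tau \times 1)_*$ then contributes a single Koszul sign $(-1)^{|y_1||y_2|}$, on the term descending from $1 \otimes y_1 \otimes y_2 \otimes 1$; next $(1 \times 1 \times \chi \times \chi)_*$ multiplies two of the four terms by $-1$; and finally $(m_4)_*$ contracts each monomial to a product of length two. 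Three of the four surviving terms are $\pm y_1 y_2$ and the fourth is $-(-1)^{|y_1||y_2|} y_2 y_1$, and these add up to $y_1 y_2 - (-1)^{|y_1||y_2|} y_2 y_1$. Since the inclusion $\widetilde{K}^{\mathrm{TF}}_*(Y \wedge Y) \hookrightarrow K^{\mathrm{TF}}_*(Y \times Y)$ is precisely the inclusion of the $\widetilde{K}^{\mathrm{TF}}_*(Y) \otimes \widetilde{K}^{\mathrm{TF}}_*(Y)$ summand, the same computation gives $c_*(y_1 \otimes y_2)$, as claimed.

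The hard part is entirely sign bookkeeping: I must check that the degree-$0$ unit $1$ absorbs all but one of the potential Koszul signs arising in the transposition step, and that the conventions for the ring structure on $K^{\mathrm{TF}}_*(Y)$ and for the K\"unneth identification are arranged so that $(m_4)_*$ really is iterated multiplication with no leftover signs. A minor additional point is that Theorem~\ref{KKun} is stated for finite complexes while the $Y$ one ultimately cares about ($\Omega \Sigma X$) is not finite; this is handled by applying the statement to the finite approximations $J_s(X)$ of the James construction, or equivalently by treating the relevant K\"unneth isomorphism for $Y$ as part of the ambient hypotheses under which $m_*$ was defined in this subsection.
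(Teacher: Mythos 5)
Your proposal is correct, and it is exactly the argument the paper intends: the lemma is stated with no written proof beyond the remark that it follows by ``expanding the definition of the commutator in terms of the K\"unneth Theorem,'' and your expansion --- writing $\gamma \simeq m_4 \circ (1 \times 1 \times \chi \times \chi) \circ (1 \times \tau \times 1) \circ (\Delta \times \Delta)$, using $\chi_*(y) = -y$ on primitives, and tracking the single Koszul sign from the middle transposition --- checks out, with the four monomials summing to $y_1 y_2 - (-1)^{|y_1||y_2|} y_2 y_1$ as required. Your handling of the finiteness caveat for Theorem~\ref{KKun} also matches the paper's implicit convention.
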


\section{The category of $\psi$-modules} \label{psiModSection}

In \cite{AdamsIV}, Adams defines an abelian category which we will follow Selick \cite{Selick} in calling $\psi$-modules. The $e$-invariant, which is our central tool, is defined by Adams in terms of $\psi$-modules. The purpose of this section is to record results about $\psi$-modules for later use.

A $\psi$-module consists of an abelian group $M$, with homomorphisms $$\psi^\ell:M \to M$$ for each $\ell \in \mathbb{Z}$, satisfying the axioms of \cite[Section 6]{AdamsIV}. If $X$ is a space then the group $\widetilde{K}^0(X)$, together with its Adams operations, is a $\psi$-module. Since we defined $\widetilde{K}^{-1}(X)$ by setting $\widetilde{K}^{-1}(X)=\widetilde{K}^{0}(\Sigma X)$, it too has the structure of a $\psi$-module. Maps of spaces induce maps of $\psi$-modules. The Adams operation $\psi^\ell$ on $\widetilde{K}^0(S^{2n})$ is multiplication by $\ell^n$, so in particular Adams operations do not commute with the Bott isomorphism.

For graded $\psi$-modules $M$ and $N$ we will write $\mathrm{Hom}_{\psi \mathrm{-Mod}}(M,N)$ for the abelian group consisting of graded $\psi$-module homomorphisms. The unadorned notation $\mathrm{Hom}(M,N)$ will mean homomorphisms of the underlying graded abelian groups.

\begin{lemma} \label{splitPsiInclusion} \begin{sloppypar} Let $M$ and $N$ be $\psi$-modules, with $N$ torsion-free. The inclusion of $\mathbb{Z}$-modules ${\mathrm{Hom}_{\psi \mathrm{-Mod}}(M,N) \xhookrightarrow{} \mathrm{Hom}(M,N)}$ is an injection onto a summand. \end{sloppypar}
\end{lemma}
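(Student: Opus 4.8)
The plan is to realise $\mathrm{Hom}_{\psi \mathrm{-Mod}}(M,N)$ as the kernel of an explicit additive operator on $\mathrm{Hom}(M,N)$, to observe that this kernel is a pure subgroup, and then to split it off. For each $\ell \in \mathbb{Z}$ define $D_\ell : \mathrm{Hom}(M,N) \to \mathrm{Hom}(M,N)$ by $D_\ell(f) = \psi^\ell_N \circ f - f \circ \psi^\ell_M$; this is a homomorphism of abelian groups, since it is additive in $f$. A homomorphism $f$ is a morphism of $\psi$-modules precisely when $D_\ell(f) = 0$ for every $\ell$, so $\mathrm{Hom}_{\psi \mathrm{-Mod}}(M,N) = \bigcap_{\ell \in \mathbb{Z}} \ker(D_\ell)$, which is the kernel of the single homomorphism $D = (D_\ell)_\ell : \mathrm{Hom}(M,N) \to \prod_{\ell \in \mathbb{Z}} \mathrm{Hom}(M,N)$.

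Next I would feed in the hypothesis that $N$ is torsion-free. This makes $\mathrm{Hom}(M,N)$ torsion-free: if $nf = 0$ then $n \cdot f(m) = 0$, hence $f(m) = 0$, for every $m \in M$. Therefore $\prod_{\ell \in \mathbb{Z}} \mathrm{Hom}(M,N)$ is torsion-free, and so is its subgroup $\mathrm{im}(D) \cong \mathrm{Hom}(M,N) / \mathrm{Hom}_{\psi \mathrm{-Mod}}(M,N)$. Equivalently, $\mathrm{Hom}_{\psi \mathrm{-Mod}}(M,N)$ is a pure subgroup of $\mathrm{Hom}(M,N)$: if $nf$ is $\psi$-linear for some $n \geq 1$, then $n D_\ell(f) = D_\ell(nf) = 0$ in the torsion-free group $\mathrm{Hom}(M,N)$, so $D_\ell(f) = 0$ for all $\ell$ and $f$ is already $\psi$-linear.

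Finally I would promote this torsion-free quotient to a free one. In every situation in which the lemma is applied the $\psi$-modules are finitely generated, being torsion-free $K$-theory or $K$-homology groups of finite complexes, so $\mathrm{Hom}(M,N)$ is finitely generated, and then $\mathrm{im}(D)$ is finitely generated and torsion-free, hence free abelian. Since the right-hand term of the short exact sequence $0 \to \mathrm{Hom}_{\psi \mathrm{-Mod}}(M,N) \to \mathrm{Hom}(M,N) \to \mathrm{im}(D) \to 0$ is then free, hence projective, the sequence splits, and this exhibits $\mathrm{Hom}_{\psi \mathrm{-Mod}}(M,N)$ as a direct summand.

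I expect this last step to be the one that carries real content: for general abelian groups a pure subgroup need not be a direct summand --- for instance $\bigoplus_{\mathbb{N}} \mathbb{Z}$ is a pure but non-split subgroup of $\prod_{\mathbb{N}} \mathbb{Z}$ --- so one genuinely needs $\mathrm{im}(D)$ to be free, and it is finite generation that supplies this. By contrast, the reformulation of $\psi$-linearity via the operators $D_\ell$, and the inheritance of torsion-freeness by $\mathrm{Hom}(M,N)$ and then by the quotient, are routine.
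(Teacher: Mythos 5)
Your argument is essentially the paper's: the core step in both is the purity observation that if $k\varphi$ is a $\psi$-module homomorphism for some nonzero $k$, then torsion-freeness of $N$ (hence of $\mathrm{Hom}(M,N)$) forces $\varphi$ itself to be one, so the cokernel of the inclusion is torsion-free. Your reformulation via the operators $D_\ell(f)=\psi^\ell_N\circ f-f\circ\psi^\ell_M$ is a cosmetic repackaging of that. Where you genuinely diverge is at the end: the paper simply asserts that torsion-freeness of the cokernel ``implies the result,'' whereas you correctly point out that for arbitrary abelian groups a pure subgroup with torsion-free quotient need not split (your $\bigoplus_{\mathbb{N}}\mathbb{Z}\subset\prod_{\mathbb{N}}\mathbb{Z}$ example is apt), and you close the gap by invoking finite generation to make the quotient free, hence projective. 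This extra step is not pedantry --- it is exactly what the paper's one-line conclusion is silently relying on --- so your version is the more careful one. The only caveat is that finite generation is not among the lemma's stated hypotheses: as written, the lemma quantifies over all $\psi$-modules $M$ and torsion-free $N$, and your proof (like, implicitly, the paper's) only establishes it when $\mathrm{Hom}(M,N)$ is finitely generated. Since in every application $M$ and $N$ are torsion-free $K$-groups of finite complexes, this covers all uses in the paper, but strictly speaking you have proved a slightly restricted variant of the stated lemma rather than the lemma itself; it would be worth either adding the finiteness hypothesis to the statement or deciding whether the general statement is actually true.
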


\begin{proof} Let $\varphi:M \to N$ be a homomorphism of underlying $\mathbb{Z}$-modules. If, for some $k \in \mathbb{Z} \setminus \{ 0\}$, $k \cdot \varphi$ is a $\psi$-module homomorphism, then, since $N$ is torsion-free, $\varphi$ is also a $\psi$-module homomorphism. This implies that $\mathrm{Coker}(\mathrm{Hom}_{\psi \mathrm{-Mod}}(M,N) \xhookrightarrow{} \mathrm{Hom}(M,N))$ is torsion-free, which implies the result.
\end{proof}

For the avoidance of doubt, by the $e$-invariant we will always mean what Adams calls the complex $e$-invariant $e_C$ \cite{AdamsII,AdamsIV}.

\begin{definition}[Adams' $e$-invariant] \label{e} Suppose that $f:X \to Y$ induces the trivial map on $\widetilde{K}^*$. Then the cofibre sequence of $f$ gives a short exact sequence of $\psi$-modules $$0 \leftarrow \widetilde{K}^0(Y) \leftarrow \widetilde{K}^0(C_f) \leftarrow \widetilde{K}^0(\Sigma X) \leftarrow 0.$$ The \emph{$e$-invariant} of $f$ is the element of $\mathrm{Ext}_{\psi \mathrm{-Mod}}(\widetilde{K}^0(Y),\widetilde{K}^0(\Sigma X))$ represented by this exact sequence. \end{definition}

The $e$-invariant does not commute with the Bott isomorphism, but the interaction between the Bott isomorphism and the Adams operations is easy to describe, as follows. Let $\psi^\ell_Y$ be the homomorphism $\psi^\ell : \widetilde{K}^0(Y) \to \widetilde{K}^0(Y)$. Then, modulo the Bott isomorphism, we have $\psi^\ell_{\Sigma^2 X} = \ell \cdot \psi^\ell_X$. That is `upon double suspending, the Adams operations gain a factor $\ell$'. In terms of the $e$-invariant, all we need to know is the following.

\begin{lemma}{\cite[Proposition 3.4b)]{AdamsIV}} \label{eOfSuspension} There is a homomorphism $$T: \mathrm{Ext}_{\psi \mathrm{-Mod}}(\widetilde{K}^0(Y),\widetilde{K}^0(\Sigma X)) \to \mathrm{Ext}_{\psi \mathrm{-Mod}}(\widetilde{K}^0(\Sigma^2 Y),\widetilde{K}^0(\Sigma^3 X)),$$ such that $T(e(f))=e(\Sigma^2 f)$. \qed \end{lemma}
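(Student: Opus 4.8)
The plan is to exhibit $T$ as the map on $\mathrm{Ext}$-groups induced by an exact endofunctor of the category of $\psi$-modules --- the ``double suspension'' functor --- and then to check that this functor carries the extension representing $e(f)$ to the one representing $e(\Sigma^2 f)$. To this end I would first define, for a $\psi$-module $M$, a new $\psi$-module $\beta M$ with the same underlying abelian group but with operations $\psi^\ell_{\beta M} := \ell \cdot \psi^\ell_M$. The identities $\psi^1 = \mathrm{id}$ and $\psi^k \psi^\ell = \psi^{k\ell}$ pass immediately to $\beta M$, and one checks that the remaining (congruence-type) axioms of \cite[Section 6]{AdamsIV} are preserved as well. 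Any $\psi$-module homomorphism $M \to N$ is automatically a homomorphism $\beta M \to \beta N$, since commuting with each $\psi^\ell$ forces commuting with $\ell \cdot \psi^\ell$. Thus $\beta$ is the identity on underlying abelian groups and homomorphisms; in particular it is exact, so it induces a homomorphism $\beta_* : \mathrm{Ext}_{\psi \mathrm{-Mod}}(A,B) \to \mathrm{Ext}_{\psi \mathrm{-Mod}}(\beta A, \beta B)$ on Yoneda $\mathrm{Ext}$ for all $\psi$-modules $A, B$.

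Next I would use the naturality of the Bott isomorphism. As recalled in the text, modulo Bott one has $\psi^\ell_{\Sigma^2 Z} = \ell \cdot \psi^\ell_Z$ for every space $Z$, and this holds naturally in $Z$; equivalently, the Bott isomorphism is a natural isomorphism of $\psi$-modules $b_Z : \widetilde{K}^0(\Sigma^2 Z) \xrightarrow{\cong} \beta \widetilde{K}^0(Z)$. Taking $Z = Y$ and $Z = \Sigma X$ (noting $\widetilde{K}^0(\Sigma^3 X) \cong \beta \widetilde{K}^0(\Sigma X)$), I define $T$ to be $\beta_*$ transported along these isomorphisms, i.e. precomposed in the first variable with $b_Y$ and postcomposed in the second with $b_{\Sigma X}^{-1}$; it is visibly a group homomorphism.

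It then remains to identify $T(e(f))$ with $e(\Sigma^2 f)$. If $f : X \to Y$ induces the trivial map on $\widetilde{K}^*$ then so does $\Sigma^2 f : \Sigma^2 X \to \Sigma^2 Y$, by naturality of the suspension isomorphism, and the cofibre of $\Sigma^2 f$ is $\Sigma^2 C_f$; applying $\widetilde{K}^0$ to the cofibre sequence $\Sigma^2 X \to \Sigma^2 Y \to \Sigma^2 C_f$ gives the extension of $\psi$-modules representing $e(\Sigma^2 f)$. Naturality of Bott identifies this extension with the image under $\beta$ of the extension $0 \leftarrow \widetilde{K}^0(Y) \leftarrow \widetilde{K}^0(C_f) \leftarrow \widetilde{K}^0(\Sigma X) \leftarrow 0$ representing $e(f)$, so $T(e(f)) = e(\Sigma^2 f)$. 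The main obstacle is the opening step: checking that $M \mapsto \beta M$ genuinely lands in the category of $\psi$-modules --- that rescaling $\psi^\ell$ by $\ell$ respects \emph{all} of Adams' axioms --- together with the companion fact that the Bott map is a $\psi$-module isomorphism onto $\beta \widetilde{K}^0(Z)$; this is the substance of \cite[Proposition 3.4]{AdamsIV}, and once it is available the passage to $\mathrm{Ext}$ and the comparison of extension classes are formal consequences of the exactness of $\beta$ and of the naturality of cofibre sequences and of the Bott isomorphism.
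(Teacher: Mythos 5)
The paper offers no proof of this lemma: it is quoted directly from Adams \cite[Proposition 3.4b)]{AdamsIV}, and your argument is a correct reconstruction of essentially what Adams does there --- the twisting functor $\beta$ (identity on underlying groups, hence exact), the identification of the Bott isomorphism as a natural $\psi$-module isomorphism $\widetilde{K}^0(\Sigma^2 Z) \cong \beta\widetilde{K}^0(Z)$, and the comparison of the two cofibre-sequence extensions. No gaps; this matches the intended (cited) proof.
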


We will be concerned only with the $e$-invariants of maps whose domain is a sphere. One of the two $K$-groups of a sphere vanishes, in the dimension matching the parity of the sphere, but the $e$-invariant, as defined above, lives only in $K^0$. In order to detect maps regardless of the parity of the sphere on which they are defined, we will need to keep track of the $e$-invariants of $f$ and $\Sigma f$, so we will use the following modified $e$-invariant.

\begin{definition}[Double $e$-invariant] \label{eDouble} Let $$\mathrm{Ext}_{\psi \mathrm{-Mod}}(\widetilde{K}^*(Y),\widetilde{K}^*(\Sigma X)) := \mathrm{Ext}_{\psi \mathrm{-Mod}}(\widetilde{K}^0(Y),\widetilde{K}^0(\Sigma X)) \oplus \mathrm{Ext}_{\psi \mathrm{-Mod}}(\widetilde{K}^{-1}(Y),\widetilde{K}^{-1}(\Sigma X)).$$ Suppose that $f:X \to Y$ induces the trivial map on $\widetilde{K}^*$. Then the \emph{double $e$-invariant of f} is $\de (f)=(e(f), e(\Sigma f)) \in \mathrm{Ext}_{\psi \mathrm{-Mod}}(\widetilde{K}^*(Y),\widetilde{K}^*(\Sigma X)).$ \end{definition}

Pullback of an extension along a homomorphism defines a map $$\mathrm{Hom}_{\psi \mathrm{-Mod}}(M,B) \otimes \mathrm{Ext}_{\psi \mathrm{-Mod}}(B,A) \to \mathrm{Ext}_{\psi \mathrm{-Mod}}(M,A).$$

If $g : Y \to Z$ then $e(g \circ f)$ is represented by the pullback of $e(f)$ and $g^*: \widetilde{K}^0(Z) \to \widetilde{K}^0(Y)$ \cite[Proposition 3.2 b)]{AdamsIV}. To describe $\de ( g \circ f )$ we need only apply this result degree-wise, as follows. For convenience, we write $g^* \cdot e(f)$ for the pullback of $g^*$ and $e(f)$. Define the map $$\theta_0(f): \mathrm{Hom}_{\psi \mathrm{-Mod}}(\widetilde{K}^0(Z),\widetilde{K}^0(Y)) \to \mathrm{Ext}_{\psi \mathrm{-Mod}}(\widetilde{K}^0(Z),\widetilde{K}^0(\Sigma X))$$ $$\theta_0(f)(x) = x \cdot e(f).$$ Likewise, define $$\theta_{-1}(f) : \mathrm{Hom}_{\psi \mathrm{-Mod}}(\widetilde{K}^{-1}(Z),\widetilde{K}^{-1}(Y)) \to \mathrm{Ext}_{\psi \mathrm{-Mod}}(\widetilde{K}^{-1}(Z),\widetilde{K}^{-1}(\Sigma X))$$ $$\theta_{-1}(f)(x) = x \cdot e(\Sigma f).$$

Combining these, let $$\theta(f) : \mathrm{Hom}_{\psi \mathrm{-Mod}}(\widetilde{K}^*(Z),\widetilde{K}^*(Y)) \to \mathrm{Ext}_{\psi \mathrm{-Mod}}(\widetilde{K}^*(Z),\widetilde{K}^*(\Sigma X))$$ be the direct sum $\theta_0(f) \oplus \theta_{-1}(f)$. These definitions, together with Adams' above result, give the following lemma.

\begin{lemma} \label{eTimesD} For maps $f:X \to Y$ and $g:Y \to Z$, the following diagram commutes: \begin{center}
\begin{tabular}{c}
\xymatrix{ [Y,Z] \ar[r]^{f^*} \ar[d]_{\mathrm{deg}} & [X,Z] \ar[d]^{\de} \\
\mathrm{Hom}_{\psi \mathrm{-Mod}}(\widetilde{K}^*(Z),\widetilde{K}^*(Y)) \ar[r]^{\theta(f)} & \mathrm{Ext}_{\psi \mathrm{-Mod}}(\widetilde{K}^*(Z),\widetilde{K}^*(\Sigma X)).
}
\end{tabular}
\end{center}

\end{lemma}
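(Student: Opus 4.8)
The plan is to unwind the definitions of $\de$, $\mathrm{deg}$ and $\theta(f)$ and reduce the whole statement to Adams' composition formula for the $e$-invariant (\cite[Proposition 3.2 b)]{AdamsIV}, recalled in the paragraph preceding the lemma), applied once in each of the two $\mathbb{Z}/2$-degrees. First I would note that all three maps $\de$, $\mathrm{deg}$ and $\theta(f)$ split as direct sums over the $\mathbb{Z}/2$-grading: $\theta(f)=\theta_0(f)\oplus\theta_{-1}(f)$ by construction, $\de(h)=(e(h),e(\Sigma h))$ by Definition \ref{eDouble}, and $\mathrm{deg}(g)=g^*$ decomposes as $(g^*|_{\widetilde{K}^0},\,g^*|_{\widetilde{K}^{-1}})$. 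Chasing an element $g\in[Y,Z]$ around the square, commutativity is therefore equivalent to the two identities
\[
e(g\circ f) = g^*|_{\widetilde{K}^0}\cdot e(f) \qquad\text{and}\qquad e(\Sigma(g\circ f)) = g^*|_{\widetilde{K}^{-1}}\cdot e(\Sigma f),
\]
where $g^*\cdot(-)$ denotes pullback of extensions of $\psi$-modules. The standing hypothesis (inherited from the definitions of $e$ and $\theta(f)$) that $f$ induces the zero map on $\widetilde{K}^*$ guarantees that $g\circ f$, and hence also $\Sigma(g\circ f)=\Sigma g\circ\Sigma f$, induce the zero map on $\widetilde{K}^*$, so every $e$-invariant written above is defined.

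The first identity is exactly \cite[Proposition 3.2 b)]{AdamsIV} applied to the composable pair $f,g$. For the second, I would use $\Sigma(g\circ f)=\Sigma g\circ\Sigma f$ and apply the same Adams formula to the pair $\Sigma f,\Sigma g$, obtaining $e(\Sigma g\circ\Sigma f)=(\Sigma g)^*|_{\widetilde{K}^0}\cdot e(\Sigma f)$. It then remains to identify $(\Sigma g)^*$ acting on $\widetilde{K}^0(\Sigma(-))$ with $g^*$ acting on $\widetilde{K}^{-1}(-)$; this is immediate from our convention $\widetilde{K}^{-1}(-):=\widetilde{K}^0(\Sigma(-))$ together with naturality of suspension, and it is an identification of $\psi$-modules because the $\psi$-module structure on $\widetilde{K}^{-1}$ is by definition the one transported from $\widetilde{K}^0(\Sigma(-))$. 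Combining the two identities yields $\de(g\circ f)=\theta(f)(\mathrm{deg}(g))$, which is the asserted commutativity.

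There is no genuinely hard step here: the argument is bookkeeping with the $\mathbb{Z}/2$-grading on top of Adams' formula. The points that require a little care — and which I expect to be the only (mild) obstacle — are confirming that $\Sigma f$ still induces the zero map on $\widetilde{K}^*$, so that $e(\Sigma f)$ and hence the second summand of $\de$ is defined, and checking that the identification $(\Sigma g)^*|_{\widetilde{K}^0}=g^*|_{\widetilde{K}^{-1}}$ is compatible with the Adams operations rather than merely with the underlying abelian groups; both are formal consequences of the conventions on $\widetilde{K}^{-1}$ set up in Section \ref{KFacts} and the definition of the $\psi$-module structure in Section \ref{psiModSection}.
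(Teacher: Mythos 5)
Your proposal is correct and is essentially the paper's own argument: the paper treats the lemma as an immediate consequence of the preceding paragraph, namely Adams' composition formula \cite[Proposition 3.2 b)]{AdamsIV} applied degree-wise in the $\mathbb{Z}/2$-grading, with the $\widetilde{K}^{-1}$ component handled exactly as you do via $\Sigma(g\circ f)=\Sigma g\circ\Sigma f$ and the convention $\widetilde{K}^{-1}(-)=\widetilde{K}^0(\Sigma(-))$. Your extra checks (that $\Sigma f$ still induces zero on $\widetilde{K}^*$ and that the identification respects $\psi$-module structures) are correct bookkeeping that the paper leaves implicit.
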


Following \cite{Selick}, write $\mathbb{Z}(n)$ for the $\psi$-module $\widetilde{K}^0(S^{2n})$. Explicitly, $\mathbb{Z}(n)$ has underlying abelian group $\mathbb{Z}$, and $\psi^\ell$ acts by multiplication by $\ell^n$. It follows that $\widetilde{K}^{-1}(S^{2n+1}) := \widetilde{K}^{0}(S^{2n+2}) \cong \mathbb{Z}(n+1)$.

\begin{lemma}{\cite[Proposition 7.8, 7.9]{AdamsIV}} \label{QZstab} If $n<m$ then $\mathrm{Ext}_{\psi \mathrm{-Mod}}(\mathbb{Z}(n),\mathbb{Z}(m))$ injects into $\faktor{\mathbb{Q}}{\mathbb{Z}}$. The $e$-invariant of a map $f:S^{2m-1} \to S^{2n}$ may therefore be regarded as an element of $\faktor{\mathbb{Q}}{\mathbb{Z}}$. Furthermore, the value $e(f)$ in $\faktor{\mathbb{Q}}{\mathbb{Z}}$ satisfies $e(\Sigma^2 f) = e(f)$, so in particular, when $f$ is a map between spheres, $e(f)$ depends only on the stable homotopy class of $f$.  \qed
\end{lemma}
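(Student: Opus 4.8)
This is essentially \cite[Propositions 7.8 and 7.9]{AdamsIV}, and the plan is to reproduce Adams' argument in the form we need. For the first assertion I would classify extensions of $\psi$-modules $0 \to \mathbb{Z}(m) \to E \to \mathbb{Z}(n) \to 0$ by hand. The underlying abelian groups are free, so the sequence splits over $\mathbb{Z}$; fix a generator $a$ of the sub $\mathbb{Z}(m)$ and a lift $b \in E$ of the generator of the quotient $\mathbb{Z}(n)$. The whole $\psi$-module structure is then encoded in integers $c_\ell$ by $\psi^\ell(a) = \ell^m a$ and $\psi^\ell(b) = \ell^n b + c_\ell a$. The axiom $\psi^1 = \mathrm{id}$ forces $c_1 = 0$, and equating the two computations of $\psi^{k\ell}(b)$ coming from $\psi^k\psi^\ell = \psi^\ell\psi^k = \psi^{k\ell}$ yields $c_k(\ell^m - \ell^n) = c_\ell(k^m - k^n)$ for all $k,\ell \geq 1$. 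Since $n < m$, the integer $k^m - k^n$ is nonzero for $k \geq 2$, so $\beta := c_k/(k^m - k^n)$ is a rational number independent of the choice of $k \geq 2$, and $c_\ell = \beta(\ell^m - \ell^n)$ for every $\ell$; conversely, any $\beta \in \mathbb{Q}$ with $\beta(\ell^m-\ell^n)\in\mathbb{Z}$ for all $\ell$ defines a genuine $\psi$-module extension. Replacing the lift by $b \mapsto b + ta$ with $t \in \mathbb{Z}$ changes $c_\ell$ to $c_\ell + t(\ell^m - \ell^n)$, hence $\beta$ to $\beta + t$, so the class of $\beta$ in $\mathbb{Q}/\mathbb{Z}$ is a well-defined invariant of the extension class; sending an extension to this class is the desired injection $\mathrm{Ext}_{\psi\mathrm{-Mod}}(\mathbb{Z}(n),\mathbb{Z}(m)) \hookrightarrow \mathbb{Q}/\mathbb{Z}$ (it is in fact additive for the Baer sum, but injectivity is all we need). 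Applying this with $Y = S^{2n}$, $\Sigma X = S^{2m}$, and noting that $f : S^{2m-1} \to S^{2n}$ automatically induces the zero map on $\widetilde{K}^*$, we may regard $e(f)$ as an element of $\mathbb{Q}/\mathbb{Z}$.

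For the second assertion I would feed this description into Lemma \ref{eOfSuspension}. Its homomorphism $T$ carries $e(f) \in \mathrm{Ext}_{\psi\mathrm{-Mod}}(\mathbb{Z}(n),\mathbb{Z}(m))$ to $e(\Sigma^2 f) \in \mathrm{Ext}_{\psi\mathrm{-Mod}}(\mathbb{Z}(n+1),\mathbb{Z}(m+1))$, so it suffices to check that $T$ is compatible with the two inclusions into $\mathbb{Q}/\mathbb{Z}$ just constructed. Using the fact recorded just before Lemma \ref{eOfSuspension} --- that modulo the Bott isomorphism the Adams operation $\psi^\ell$ acquires a factor $\ell$ under double suspension --- the extension representing $e(\Sigma^2 f)$ has, in suitable generators $a', b'$, structure maps $\psi^\ell(a') = \ell^{m+1}a'$ and $\psi^\ell(b') = \ell^{n+1}b' + \ell c_\ell a'$, i.e.\ its cocycle is $\ell c_\ell = \beta(\ell^{m+1} - \ell^{n+1})$. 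Thus its $\beta$-value, and hence its class in $\mathbb{Q}/\mathbb{Z}$, agrees with that of $e(f)$, giving $e(\Sigma^2 f) = e(f)$ in $\mathbb{Q}/\mathbb{Z}$. For maps between spheres the last clause then follows: $e$ is additive, so it factors through homotopy, and any two stably homotopic maps between spheres become homotopic after some (without loss of generality even) number of suspensions, so their $e$-invariants agree by the preceding identity.

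The computation is routine; the only place demanding care is the interaction of the $e$-invariant with the Bott isomorphism in the suspension step, which is precisely why one routes through Lemma \ref{eOfSuspension} and the explicit ``gains a factor $\ell$'' statement rather than arguing directly. I do not expect a genuine obstacle, the content being Adams'; if one preferred to avoid re-deriving the classification of $\psi$-module extensions, one could instead simply cite \cite[Propositions 7.8 and 7.9]{AdamsIV}.
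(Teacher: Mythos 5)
Your proof is correct; the paper itself offers no argument for this lemma, simply citing \cite[Propositions 7.8 and 7.9]{AdamsIV}, and your reconstruction --- classifying extensions $0 \to \mathbb{Z}(m) \to E \to \mathbb{Z}(n) \to 0$ by the invariant $\beta = c_k/(k^m-k^n) \in \mathbb{Q}/\mathbb{Z}$ and checking via the ``gains a factor $\ell$'' rule that $T$ preserves $\beta$ --- is a faithful rendering of Adams' own argument. Nothing further is needed.
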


The following theorem is the main technical component of Selick's paper \cite{Selick}.

\begin{theorem}{\cite[Theorem 6]{Selick}} \label{Sthm} Let $f':S^{2m-1} \to S^{2n}$ be such that $p^{t-1}e(f') \neq 0$ in $\faktor{\mathbb{Q}}{\mathbb{Z}}$, for $p$ prime and some $t \in \mathbb{Z}^+$. Let $Y$ have the homotopy type of a finite $CW$-complex and let $g:S^{2n} \to Y$ be such that $\mathrm{Im}(g^*:\widetilde{K}^0(Y) \to \widetilde{K}^0(S^{2n}))$ contains $up^s\widetilde{K}^0(S^{2n})$, for $s \in \mathbb{Z}^+$ and $u$ prime to $p$. If $s<t$, and there exists some $\ell \in \mathbb{Z}^+$ for which $$\psi^\ell \otimes \mathbb{Q}: \widetilde{K}^0(Y) \otimes \mathbb{Q} \to \widetilde{K}^0(Y) \otimes \mathbb{Q}$$ does not have $\ell^m$ as an eigenvalue, then $e(g \circ f') \neq 0$. \qed
\end{theorem}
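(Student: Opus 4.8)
The plan is to realise $e(g\circ f')$ as a pullback of an extension of $\psi$-modules and to show that extension is non-split; the eigenvalue hypothesis on $Y$ will be precisely what reduces non-splitting to an arithmetic statement about $e(f')$. To set up: since $\widetilde{K}^0(S^{2m-1})=0$ and $\widetilde{K}^0(S^{2n+1})=0$, both $(f')^*$ and $(g\circ f')^*$ are trivial on $\widetilde{K}^*$, so $e(f')$ and $e(g\circ f')$ are defined. The cofibre $C_{f'}$ has cells only in the even dimensions $2n$ and $2m$, so $\widetilde{K}^{-1}(C_{f'})=0$, $\widetilde{K}^0(C_{f'})\cong\mathbb{Z}^2$, and the extension of Definition~\ref{e} is
$$0\to\mathbb{Z}(m)\to E\xrightarrow{\ \pi\ }\mathbb{Z}(n)\to 0,\qquad E:=\widetilde{K}^0(C_{f'}),$$
for which I fix a group basis $E=\mathbb{Z}\widetilde\beta_n\oplus\mathbb{Z}\beta_m$ with $\beta_m$ generating the submodule $\mathbb{Z}(m)=\widetilde{K}^0(S^{2m})$ and $\pi(\widetilde\beta_n)=\beta_n$ a generator of $\mathbb{Z}(n)=\widetilde{K}^0(S^{2n})$. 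By Adams' description of $e(g\circ f')$ as a pullback (the statement recalled just before Lemma~\ref{eTimesD}), $e(g\circ f')$ is represented by the extension obtained by pulling the displayed one back along $g^*\colon\widetilde{K}^0(Y)\to\widetilde{K}^0(S^{2n})$. A pullback extension splits if and only if the map pulled back lifts, so it suffices to show that $g^*$ does \emph{not} lift to a $\psi$-module homomorphism $\phi\colon\widetilde{K}^0(Y)\to E$ with $\pi\circ\phi=g^*$. I would assume such a $\phi$ exists and derive a contradiction.

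The next step is to use the eigenvalue hypothesis to pin down $\phi$ rationally (note $\widetilde{K}^0(Y)\otimes\mathbb{Q}$ is finite-dimensional since $Y$ is finite). Fix the integer $\ell$ from the hypothesis. Since $m\neq n$ (implicit in $e(f')$ being valued in $\mathbb{Q}/\mathbb{Z}$, Lemma~\ref{QZstab}), $\psi^\ell$ acts on $E\otimes\mathbb{Q}$ with the two distinct eigenvalues $\ell^m$ and $\ell^n$; as all Adams operations commute, $V:=\ker(\psi^\ell-\ell^n)$ is a $\psi$-submodule and $E\otimes\mathbb{Q}=(\mathbb{Z}(m)\otimes\mathbb{Q})\oplus V$, with $\pi$ restricting to an isomorphism $V\xrightarrow{\cong}\mathbb{Z}(n)\otimes\mathbb{Q}$. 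Writing $\phi\otimes\mathbb{Q}=(\phi_m,\phi_V)$ for the two components, $\psi$-equivariance of $\phi$ forces $\phi_m\circ\psi^\ell=\ell^m\phi_m$; regarding $\phi_m$ as a linear functional on $\widetilde{K}^0(Y)\otimes\mathbb{Q}$, this says $\phi_m$ is an eigenvector of the transpose of $\psi^\ell$ with eigenvalue $\ell^m$. Since $\psi^\ell$ on $\widetilde{K}^0(Y)\otimes\mathbb{Q}$, hence its transpose, does not have $\ell^m$ as an eigenvalue, $\phi_m=0$, so $\phi\bigl(\widetilde{K}^0(Y)\bigr)\subseteq E\cap V$.

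Now I extract the contradiction. The line $V=\mathbb{Q}z$, where $z=\widetilde\beta_n+\mu\beta_m$ is the unique lift of $\beta_n$ lying in $V$, for some $\mu\in\mathbb{Q}$; unwinding Definition~\ref{e} one sees $\mu\bmod\mathbb{Z}$ is (up to sign) the class $e(f')$, and writing $\mu=a/b$ in lowest terms one gets $E\cap V=\mathbb{Z}(bz)$, whence $\pi(E\cap V)=b\mathbb{Z}\beta_n$ and $b=\mathrm{ord}(e(f'))$. Therefore $\mathrm{Im}(g^*)=\pi\bigl(\phi(\widetilde{K}^0(Y))\bigr)\subseteq b\mathbb{Z}\beta_n$, contradicting the hypothesis $\mathrm{Im}(g^*)\supseteq up^s\widetilde{K}^0(S^{2n})=up^s\mathbb{Z}\beta_n$ as soon as $b\nmid up^s$. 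Finally, since $p^{t-1}e(f')\neq0$, the order $b$ of $e(f')$ does not divide $p^{t-1}$; localising at $p$ (so that $b$ is a power of $p$ — the situation in the applications, via Lemma~\ref{fuel}) this gives $v_p(b)\geq t>s$, so $b\nmid up^s$ because $u$ is prime to $p$. Hence no lift $\phi$ exists, the pullback extension is non-split, and $e(g\circ f')\neq0$.

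The step I expect to require the most care is the arithmetic at the end: identifying $b$ with the order of Adams' $e$-invariant $e(f')$ (unwinding the Chern-character/eigenvector description of $e$), and then checking that $p^{t-1}e(f')\neq0$ together with $s<t$ and $\gcd(u,p)=1$ really force $\mathrm{ord}(e(f'))\nmid up^s$ — which is transparent once $\mathrm{ord}(e(f'))$ is a $p$-power (the case of interest) but requires one to work $p$-locally otherwise. The remaining ingredients — the formal properties of the $e$-invariant and of pullback extensions cited from Adams, the eigenspace decomposition of a single commuting operator, and the final valuation estimate — are routine.
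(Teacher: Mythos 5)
Your overall strategy --- realise $e(g\circ f')$ as the pullback along $g^*$ of the extension $0\to\mathbb{Z}(m)\to E\to\mathbb{Z}(n)\to 0$ representing $e(f')$, use the eigenvalue hypothesis to force any $\psi$-module lift of $g^*$ into the $\ell^n$-eigenspace $V$, and then read off an integrality constraint on $\mathrm{Im}(g^*)$ from the lattice $E\cap V=\mathbb{Z}\cdot bz$ --- is the right one, and each of those steps is carried out correctly (including the transpose-eigenvalue argument and the identification of $\mu\bmod\mathbb{Z}$ with $\pm e(f')$ via Adams' eigenvector description). The paper itself gives no proof: the theorem is quoted from Selick, and Selick's argument proceeds in essentially this way.

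The gap is exactly the one you flag at the end, and it is genuine rather than a matter of extra care. Your argument reduces the theorem to the claim that $b:=\mathrm{ord}(e(f'))$ does not divide $up^s$. But the stated hypothesis $p^{t-1}e(f')\neq 0$ in $\mathbb{Q}/\mathbb{Z}$ says only $b\nmid p^{t-1}$, which is strictly weaker than $v_p(b)\geq t$ when $b$ has prime-to-$p$ factors, and those factors may divide $u$. Concretely, with $b=12$, $p=3$, $t=2$, $s=1$, $u=4$ one has $p^{t-1}e(f')$ of order $4\neq 1$, $s<t$, $\gcd(u,p)=1$, and yet $b=up^s$. Worse, this is not just a hole in the argument but a countermodel to the statement as transcribed: take $f'$ with $e(f')$ of order $12$, $Y=S^{2n}$ and $g$ the degree-$12$ self-map, so that $\mathrm{Im}(g^*)=12\,\widetilde{K}^0(S^{2n})=up^s\widetilde{K}^0(S^{2n})$ and the eigenvalue hypothesis holds ($\psi^\ell$ has only the eigenvalue $\ell^n\neq\ell^m$); since pulling back along multiplication by $12$ multiplies the Ext class by $12$, $e(g\circ f')=12\cdot e(f')=0$. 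So the hypothesis has to be read $p$-primarily: what is actually needed is $v_p(\mathrm{ord}(e(f')))\geq t$, i.e.\ that the $p$-component of $e(f')$ is not annihilated by $p^{t-1}$, and with that reading your argument closes immediately ($b\mid up^s$ would force $v_p(b)\leq s<t$). You cannot, however, obtain this "by localising at $p$" as a deduction from the hypothesis as written --- the order of $e(f')$ is what it is --- so this step should be presented as a correction (or $p$-local reinterpretation) of the statement rather than folded silently into the proof. In the only use made of the theorem in this paper, $e(f')=-1/p$ has order exactly $p$ (Theorem \ref{sphereInput}), so $b$ is a $p$-power and your argument is complete as written.
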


The following theorem of Gray \cite{Gray} will provide the map $f'$ for Theorem \ref{Sthm}. Specifically, this theorem provides a linearly spaced family of stems, each of which has a stable $p$-torsion class which is born on $S^3$ and detected by the $e$-invariant.

\begin{theorem}{\cite[Corollary of Theorem 6.2]{Gray}} \label{sphereInput} Let $p$ be an odd prime and let $j \in \mathbb{Z}^+$. Then there exists a class $f_{p,j} \in \pi_{2j(p-1)+2}(S^3)$ with $e(f_{p,j})=\frac{-1}{p} \in \faktor{\mathbb{Q}}{\mathbb{Z}}$. \qed
\end{theorem}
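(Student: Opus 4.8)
The plan is to deduce this from Adams' determination of the image of the $J$-homomorphism at odd primes together with an unstable desuspension; this is due to Gray \cite{Gray}, and here is the strategy I would follow. By Lemma \ref{QZstab} the $e$-invariant of a map of spheres depends only on its stable class and is unchanged by double suspension, so it suffices to exhibit, in the stable stem $\pi^S_{2j(p-1)-1}$, a class that has $e$-invariant $-\tfrac1p$ and that is \emph{born on $S^3$}, i.e.\ lies in the image of the stabilisation map $\pi_{2j(p-1)+2}(S^3) \to \pi^S_{2j(p-1)-1}$.

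First I would recall the stable input. Adams' computation of the image of $J$ \cite{AdamsII, AdamsIV} shows that, for every $j \geq 1$, the $p$-primary part of $\mathrm{Im}(J)$ inside $\pi^S_{2j(p-1)-1}$ is cyclic of order $p^{1+\nu_p(j)}$ and is carried injectively into $\faktor{\mathbb{Q}}{\mathbb{Z}}$ by the ($e_C$-)invariant. Consequently an explicit multiple of a generator has $e$-invariant exactly $-\tfrac1p$ (this is the same circle of facts underlying Lemma \ref{fuel}), and the problem reduces to desuspending such a class to $S^3$.

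The desuspension is where the real work lies. For $j=1$ it is classical: $\pi_{2p}(S^3) \cong \Zp$ for $p$ odd --- the first $p$-torsion group in $\pi_*(S^3)$ --- is generated by a class $\alpha_1'$ whose stabilisation is the image-of-$J$ generator $\alpha_1$ of $\pi^S_{2p-3}$, and $e(\alpha_1)$ generates $\tfrac1p\mathbb{Z}/\mathbb{Z}$ \cite{Freudenthal, AdamsIV}, which we may normalise to $-\tfrac1p$. For the general case I would use an unstable $v_1$-self-map on mod $p$ Moore spaces: for $p$ odd there is a map $A\colon P^{n+2(p-1)}(p) \to P^n(p)$ for all $n \geq 3$ (due to Cohen, Moore and Neisendorfer, with extra care needed at $p=3$) realising multiplication by $v_1$ on $K$-theory. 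Composing the bottom-cell inclusion $S^{2j(p-1)+2} \to P^{2j(p-1)+3}(p)$, the $j$-fold iterate $A^{\circ j}\colon P^{2j(p-1)+3}(p) \to P^3(p)$, and the pinch map $P^3(p) \to S^3$ produces a class in $\pi_{2j(p-1)+2}(S^3)$ whose stabilisation generates $\mathrm{Im}(J)\cap\pi^S_{2j(p-1)-1}$, by Adams' identification of the $e$-invariant of the Adams self-map. Multiplying by a suitable integer then gives $f_{p,j}$ with $e(f_{p,j}) = -\tfrac1p$, reading off the $e$-invariant via Lemma \ref{QZstab}; the same class witnesses the stable statement in every higher stem, but only its value on $S^3$ is needed here. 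An alternative, closer to Gray's original route, is to run the odd-primary EHP sequence relating the unstable homotopy of $S^3$ and of $S^{2p+1}$ and to follow the $\alpha$-family, described via secondary compositions, through the connecting maps, checking at each stage that the $e$-invariant is preserved.

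The hard part is precisely this desuspension step. Composing powers of the $j=1$ class does not work, because $\alpha_1^{2}$ lies in the wrong stem and vanishes for $p \geq 5$: the $\alpha$-family is $v_1$-periodic, not multiplicatively generated. One therefore needs either the (delicate) existence of the unstable Adams self-map all the way down to $P^3(p)$, or a careful analysis of the EHP spectral sequence for $S^3$, and in either case enough bookkeeping to pin the $e$-invariant down to exactly $-\tfrac1p$ rather than merely to a nonzero class of order $p$.
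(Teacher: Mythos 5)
The first thing to note is that the paper does not prove this statement: Theorem \ref{sphereInput} is imported verbatim as a corollary of Theorem 6.2 of Gray \cite{Gray}, and no argument is given beyond the citation. So there is no internal proof to compare against; what you have written is an attempt to reconstruct Gray's theorem itself.

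Your stable analysis is correct: the stem $2j(p-1)-1$ is congruent to $3 \bmod 4$, the $p$-part of $\mathrm{Im}(J)$ there is cyclic of order $p^{1+\nu_p(j)}$ and is detected faithfully by $e_C$, so some class of order $p$ has $e$-invariant a unit multiple of $\tfrac{1}{p}$, which can be normalised to $-\tfrac{1}{p}$. But the entire content of the theorem is the unstable assertion --- that such a class is born on $S^3$ --- and that is precisely the step you do not carry out. You name two strategies. Route (i), composing the bottom-cell inclusion, the $j$-th iterate of an unstable $v_1$-self-map of mod $p$ Moore spaces down to $P^3(p)$, and the pinch map, would work, but it rests on the existence of the Cohen--Moore--Neisendorfer self-map all the way down to $P^3(p)$ --- itself a hard theorem, genuinely delicate at $p=3$, and anachronistic as a proof of Gray's 1969 result --- and you would still need to verify that the composite stabilises to $\alpha_j$ with $e(\alpha_j)\neq 0$, which you only gesture at. Route (ii), chasing the $\alpha$-family through the EHP sequence and Toda's secondary compositions, is essentially Gray's actual argument, but you give no details of the inductive construction or of how the $e$-invariant is tracked through the brackets. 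Since you explicitly flag the desuspension as the hard part and then leave it unexecuted on both routes, the proposal correctly identifies what must be proven and which machinery is relevant, but it is an outline with a genuine gap rather than a proof; in the context of this paper the appropriate move is simply to cite Gray, as the author does.
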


The corresponding 2-primary result is as follows. Adams \cite[Theorem 1.5 and Proposition 7.14]{AdamsIV} shows that, for $j>0$, the $(8j+3)$-rd stem contains a direct summand whose $2$-primary component has order 8, and that on this component the $e$-invariant is a surjection onto $\mathbb{Z}/4$. The sphere of origin of the classes in this component was deduced by Curtis in \cite{Curtis}.

\begin{theorem}{\cite{AdamsIV, Curtis}} \label{2sphereInput} Let $j \in \mathbb{Z}^+$. Then there exists a class $f_{2,j} \in \pi_{8j+6}(S^3)$ of order 4, with $e(f_{2,j})=\frac{-1}{2} \in \faktor{\mathbb{Q}}{\mathbb{Z}}$. \qed
\end{theorem}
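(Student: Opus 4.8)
The plan is to assemble the statement from the two cited sources, along the lines indicated in the paragraph immediately preceding it. First I would pin down exactly what Adams supplies: by \cite[Theorem 1.5 and Proposition 7.14]{AdamsIV}, for $j>0$ the stable stem $\pi^S_{8j+3}$ contains a direct summand $A$ — namely the image of the stable $J$-homomorphism — whose $2$-primary component $A_{(2)}$ is cyclic of order $8$, and on which the complex $e$-invariant $e_C : \pi^S_{8j+3} \to \mathbb{Q}/\mathbb{Z}$ restricts to a surjection onto the cyclic subgroup of order $4$ in $\mathbb{Q}/\mathbb{Z}$. Fixing a generator $g$ of $A_{(2)} \cong \mathbb{Z}/8$, the element $e_C(g)$ is a generator of that order-$4$ subgroup, so $e_C(g) \in \{\frac{1}{4}, -\frac{1}{4}\}$; hence $2g$ has order $4$ in $\pi^S_{8j+3}$ while $e_C(2g) = 2e_C(g) = \frac{1}{2} = \frac{-1}{2}$ in $\mathbb{Q}/\mathbb{Z}$.

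Next I would invoke Curtis's determination of the sphere of origin: \cite{Curtis} shows that the relevant $2$-primary image-of-$J$ classes are already present on $S^3$, in the sense that there is a class $f_{2,j} \in \pi_{8j+6}(S^3)$ of order $4$ whose image under the iterated suspension $\pi_{8j+6}(S^3) \to \pi^S_{8j+3}$ is the element $2g$ above. (It is $2g$, and not $g$ itself, that one lifts to $S^3$; this is consistent with the order-$4$ conclusion, a lift of $g$ being born only on a higher sphere.) It then remains to identify the $e$-invariant of $f_{2,j}$. Since $f_{2,j}$ is a map of spheres, its $e$-invariant in the sense of Definition \ref{eDouble} — whose only possibly nontrivial component here is that of $\Sigma f_{2,j}$, because $\widetilde{K}^0(S^3)=0$ — is a stable invariant by Lemma \ref{QZstab}, and therefore agrees with $e_C(2g) = \frac{-1}{2}$.

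The only non-formal ingredient is the input from \cite{Curtis}: Adams's theorem determines the $e$-invariant outright, and Lemma \ref{QZstab} carries it unchanged from $\pi^S_{8j+3}$ down to $\pi_{8j+6}(S^3)$, so the $e$-invariant side is pure bookkeeping. What is genuinely unstable — and hence must be imported rather than reproved — is the assertion that $2g$ lifts to a class on $S^3$ of order exactly $4$, and not merely of order divisible by $4$. I would therefore expect the substance of any from-scratch argument to lie precisely there, in the unstable homotopy of $S^3$ in this range, which is what Curtis (building on Adams) carried out.
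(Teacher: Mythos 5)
Your proposal matches the paper's treatment: the theorem is stated as a citation, and the paper's entire justification is the paragraph preceding it --- Adams's order-$8$ summand in $\pi^S_{8j+3}$ on which $e_C$ surjects onto $\mathbb{Z}/4$, combined with Curtis's determination of the sphere of origin --- which is exactly what you unpack (passing to $2g$ to get order $4$ and $e=\frac{-1}{2}$, with Lemma \ref{QZstab} handling the descent of the $e$-invariant to $S^3$). No gaps.
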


\section{Main construction} \label{sectionMain}

Having assembled preliminaries in Sections \ref{KFacts} and \ref{psiModSection}, we can begin to work towards the proof of Theorem \ref{K-detection}. Our approach is as follows. From the data of Theorem \ref{K-detection}, we will construct a commutative diagram of (roughly) the following form, where $\mathscr{B}$ is a set and the other objects are $\mathbb{Z}$-modules.

\begin{center}
\begin{tabular}{c}
\xymatrix{ \mathscr{B}^k \ar[d] \ar[r] & \ar[d] \pi_*(\Omega \Sigma X) \\
I^k \ar[r] &  \mathrm{Ext}_{\psi \mathrm{-Mod}}(\widetilde{K}^*(\Omega \Sigma X),\widetilde{K}^*(S^*)).
}
\end{tabular}   
\end{center}

We will argue that \begin{itemize}
    \item The image of the top map consists of classes of order dividing $p$.
    \item The image of the left vertical map generates a submodule isomorphic to the weight $k$ component of the free graded Lie algebra over $\Zp$ on two generators.
    \item The bottom map is injective.
\end{itemize}

Together, these facts imply that there is a submodule of $\pi_*(\Omega \Sigma X) \cong \pi_{*+1}(\Sigma X)$, consisting of classes of order dividing $p$, and surjecting onto a module isomorphic to the weight $k$ component of the free graded Lie algebra over $\Zp$ on two generators. This submodule (which is necessarily a $\Zp$-vector space) must therefore have dimension at least $W_2(k)$ (Theorem \ref{gradedWitt}), which will imply that $\Sigma X$ is $p$-hyperbolic (Lemma \ref{linearimplieshype}).

The diagram will be obtained by juxtaposing three squares. Subsections \ref{sq1}, \ref{sq2}, and \ref{sq3} each construct one of these squares. In Subsection \ref{sq4} we put them together and prove Theorem \ref{K-detection}. Roughly speaking, the top map of the diagram should be thought of as first taking a family of Samelson products and then pulling them back along some suitable map $f$ coming from Gray's work (Theorem \ref{sphereInput}). The vertical maps should be thought of as passing from maps of spaces to $K$-theoretic invariants, and the bottom map (therefore) should be thought of as tracking the effect of the top map on these invariants.

Because of the need to work with a finite $CW$-complex in Selick's Theorem (Theorem \ref{Sthm}) we will restrict the right hand side of the diagram to instead refer to some finite skeleton $J_s(X)$ of the James construction.

\subsection{Samelson products and their Hurewicz images in $K$-homology} \label{sq1}

Let $R$ be a commutative ring with unit. We take a \emph{graded Lie algebra} over $R$ to be defined as in \cite{NeisendorferBook}. For a non-negatively graded $R$-module $V$, let $L(V)$ denote the \emph{free graded Lie algebra} \cite[Section 8.5]{NeisendorferBook}. Write $L^k(V)$ for the submodule of $L(V)$ generated by the brackets of length $k$ in the elements of $V$. We will call $L^k(V)$ the \emph{weight $k$ component} of $L(V)$. Note that this convention differs from Neisendorfer's - he writes $L(V)_k$ for the weight $k$ component.

\begin{definition} \label{SamelsonDef} Let $Y$ be an $H$-group, and let $c: Y \wedge Y \to Y$ be the commutator of Lemma \ref{getCommutator}. Let $\alpha \in \pi_N(Y)$, and let $\beta \in \pi_M(Y)$. The \emph{Samelson product} of $\alpha$ and $\beta$, written $\langle \alpha, \beta \rangle \in \pi_{N+M}(Y)$, is the composite $$\langle \alpha, \beta \rangle : S^{N+M} \cong S^N \wedge S^M \xrightarrow{\alpha \wedge \beta} Y \wedge Y \xrightarrow{c} Y.$$ \end{definition}

Samelson products are bilinear, graded anticommutative, and satisfy the graded Jacobi identity. They fail, however, to make $\pi_*(Y)$ into a graded Lie algebra over $\mathbb{Z}$ in Neisendorfer's sense \cite[Section 7]{Neisendorfer}. This is important because we want the natural map from the free Lie algebra to the corresponding tensor algebra to be an injection onto a summand (Lemma \ref{LieIncludes}).  One could define an appropriate notion of `free graded pseudo-Lie algebra', and proceed as follows with that in place of the set $\mathscr{B}(\pi_*(A))$, which we use in what follows, but we prefer to avoid making the extra definition.

For a graded $R$-module $V$, let $U(V)$ denote the graded set of homogeneous elements in $V$. Let $\mathscr{B}(V)$ be the set of formal iterated brackets of the elements of $U(V)$. Bracket gives a natural operation on $\mathscr{B}(V)$, which we write as $[x,y]$. Elements of $\mathscr{B}(V)$ are nonassociative words in the elements of $U(V)$, so we may define a grading on $\mathscr{B}(V)$ which extends the grading on $U(V)$ via $|[x,y]|=|x|+|y|$. The \emph{weight} of an element of $\mathscr{B}(V)$ is its word length. Write $\mathscr{B}_N(V)$ for the subset of elements in degree $N$, $\mathscr{B}^k(V)$ for the subset of elements of weight $k$, and set $\mathscr{B}_N^k(V) = \mathscr{B}^k(V) \cap \mathscr{B}_N(V)$.

Let $\nu: A \to \Omega \Sigma X$ be a map. By definition of $\mathscr{B}(\pi_*(A))$, there exists a map $$\PhiPi : \mathscr{B}(\pi_*(A)) \to \pi_*(\Omega \Sigma X)$$ which extends $\nu_*$ and satisfies $\PhiPi([x,y])=\langle \PhiPi(x), \PhiPi(y) \rangle$ for all $x,y \in \mathscr{B}(\pi_*(A))$.

For a $\mathbb{Z}/2$-graded $\mathbb{Z}$-module $V$, we define a non-negatively graded $\mathbb{Z}$-module $\mathrm{Hom}(\widetilde{K}_*^{\mathrm{TF}}(S^*),V)$, by setting $$\mathrm{Hom}(\widetilde{K}_*^{\mathrm{TF}}(S^*),V)_N = \begin{cases} \mathrm{Hom}(\widetilde{K}_*^{\mathrm{TF}}(S^N),V) & \textrm{ if } N > 0, \textrm{ and } \\
0 & \textrm{ if } N \leq 0,
\end{cases}$$ where the homomorphisms are understood to respect the $\mathbb{Z}/2$-grading on $\widetilde{K}_*$ and $V$.

In the case that $V=L$ is a $\mathbb{Z}/2$-graded Lie algebra over $\mathbb{Z}$, $\mathrm{Hom}(\widetilde{K}_*^{\mathrm{TF}}(S^*),L)$ inherits a non-negatively graded Lie algebra structure as follows. Let the generators $\xi_N$ of $\widetilde{K}_*^{\mathrm{TF}}(S^N)$ be as in Remark \ref{sphGenChoice}. Then the bracket $[f,g]$ of $f \in \mathrm{Hom}(\widetilde{K}_*^{\mathrm{TF}}(S^N),L)$ and $g \in \mathrm{Hom}(\widetilde{K}_*^{\mathrm{TF}}(S^M),L)$ is the homomorphism $\widetilde{K}_*^{\mathrm{TF}}(S^M) \to L$ carrying $\xi_{N+M}$ to $[f(\xi_N),g(\xi_M)] \in L$. The squaring operation is defined in the same way. Likewise, if $V$ is a $\mathbb{Z}/2$-graded associative algebra over $\mathbb{Z}$, then $\mathrm{Hom}(\widetilde{K}_*^{\mathrm{TF}}(S^*),V)$ inherits the structure of a non-negatively graded associative algebra.

Let $\nu :A \to \Omega \Sigma X$ be a map. There is a composition $$L(\widetilde{K}_*^{\mathrm{TF}}(A)) \to T(\widetilde{K}_*^{\mathrm{TF}}(A)) \to \widetilde{K}_*^{\mathrm{TF}}(\Omega \Sigma X),$$ where the first map is the natural map which is the identity on $\widetilde{K}_*^{\mathrm{TF}}(A)$ and satisfies $[x,y] \mapsto xy -(-1)^{|x||y|}yx$, and the second map is obtained by applying the universal property of the tensor algebra to $\nu_*$. Let $$\PhiK: \mathrm{Hom}(\widetilde{K}_*^{\mathrm{TF}}(S^*),L(\widetilde{K}_*^{\mathrm{TF}}(A))) \to \mathrm{Hom}(\widetilde{K}_*^{\mathrm{TF}}(S^*),\widetilde{K}_*^{\mathrm{TF}}(\Omega \Sigma X))$$ be the pushforward along the above composite. It is then automatic that $\PhiK$ is a map of non-negatively graded Lie algebras over $\mathbb{Z}$, where the structures are defined as above.

We write $\mathrm{deg}: \pi_N(Y) \to \mathrm{Hom}(\widetilde{K}_*^{\mathrm{TF}}(S^N),\widetilde{K}_*^{\mathrm{TF}}(Y))$ for the map $f \mapsto f_*$. Let $\mathrm{deg}': \mathscr{B}(\pi_*(A)) \to \mathrm{Hom}(\widetilde{K}_*^{\mathrm{TF}}(S^*),L(\widetilde{K}_*^{\mathrm{TF}}(A)))$ be the unique map which restricts to $\mathrm{deg}: \pi_*(A) \to \mathrm{Hom}(\widetilde{K}_*^{\mathrm{TF}}(S^*),\widetilde{K}_*^{\mathrm{TF}}(A)) \subset \mathrm{Hom}(\widetilde{K}_*^{\mathrm{TF}}(S^*),L(\widetilde{K}_*^{\mathrm{TF}}(A)))$ and carries brackets to brackets. The above maps are related as follows.

\begin{lemma} \label{hurewiczing} Let $\nu : A \to \Omega \Sigma X$, for spaces $A$ and $X$ having the homotopy type of finite $CW$-complexes. The following diagram commutes:

\begin{center}
\begin{tabular}{c}
\xymatrix{ \mathscr{B}(\pi_*(A)) \ar[r]^{\PhiPi} \ar[d]^{\mathrm{deg}'} & \pi_*(\Omega \Sigma X) \ar[d]^{\mathrm{deg}} \\
\mathrm{Hom}(\widetilde{K}_*^{\mathrm{TF}}(S^*),L(\widetilde{K}_*^{\mathrm{TF}}(A))) \ar[r]^{\PhiK} & \mathrm{Hom}(\widetilde{K}_*^{\mathrm{TF}}(S^*),\widetilde{K}_*^{\mathrm{TF}}(\Omega \Sigma X)). }
\end{tabular}   
\end{center}
\end{lemma}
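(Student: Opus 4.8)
The plan is to prove the statement by induction on the word length (weight) of elements of $\mathscr{B}(\pi_*(A))$: for $w \in \mathscr{B}(\pi_*(A))$ I will show that $\mathrm{deg}(\PhiPi(w)) = \PhiK(\mathrm{deg}'(w))$. The base case is weight one, so $w = \alpha \in \pi_N(A)$. Here $\PhiPi(\alpha) = \nu_*(\alpha) = \nu \circ \alpha$, hence $\mathrm{deg}(\PhiPi(\alpha)) = \nu_* \circ \alpha_*$ by functoriality of $\widetilde{K}_*^{\mathrm{TF}}$. On the other side $\mathrm{deg}'(\alpha) = \alpha_*$, viewed in the weight-one summand of $L(\widetilde{K}_*^{\mathrm{TF}}(A))$; since $\PhiK$ is pushforward along $L(\widetilde{K}_*^{\mathrm{TF}}(A)) \to T(\widetilde{K}_*^{\mathrm{TF}}(A)) \to \widetilde{K}_*^{\mathrm{TF}}(\Omega \Sigma X)$, whose restriction to the weight-one part is $\nu_*$, we get $\PhiK(\mathrm{deg}'(\alpha)) = \nu_* \circ \alpha_*$. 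So the base case is just naturality of $\mathrm{deg}$.

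For the inductive step, write $w = [w_1, w_2]$ with $w_1, w_2$ of strictly smaller weight, and put $\alpha := \PhiPi(w_1) \in \pi_N(\Omega\Sigma X)$ and $\beta := \PhiPi(w_2) \in \pi_M(\Omega\Sigma X)$. Since $\PhiPi$ sends formal brackets to Samelson products and $\mathrm{deg}'$ sends formal brackets to Lie brackets, while $\PhiK$ is a homomorphism of graded Lie algebras, applying the inductive hypothesis to $w_1$ and $w_2$ reduces the inductive step to the single identity
\[ \mathrm{deg}(\langle \alpha, \beta \rangle) = [\mathrm{deg}(\alpha), \mathrm{deg}(\beta)] \]
in $\mathrm{Hom}(\widetilde{K}_*^{\mathrm{TF}}(S^*), \widetilde{K}_*^{\mathrm{TF}}(\Omega\Sigma X))$, where the right-hand bracket is the one defined in Section \ref{sq1} from the Pontryagin product. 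I would check this by evaluating both sides on the generator $\xi_{N+M}$. By Definition \ref{SamelsonDef}, the Künneth theorem (Theorem \ref{KKun}), and the choice of generators in Remark \ref{sphGenChoice}, the left side carries $\xi_{N+M}$ to $c_*(\alpha_*(\xi_N) \otimes \beta_*(\xi_M))$, where $c$ is the commutator. By the definition of the bracket on $\mathrm{Hom}(\widetilde{K}_*^{\mathrm{TF}}(S^*), -)$ and the commutator Lie structure on an associative algebra, the right side carries $\xi_{N+M}$ to $\alpha_*(\xi_N)\beta_*(\xi_M) - (-1)^{NM}\beta_*(\xi_M)\alpha_*(\xi_N)$, using that $\mathrm{deg}$ respects the $\mathbb{Z}/2$-grading so that $|\alpha_*(\xi_N)| \equiv N$ and $|\beta_*(\xi_M)| \equiv M$ mod $2$. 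These two expressions agree by Lemma \ref{getCommutator}, \emph{provided} $\alpha_*(\xi_N)$ and $\beta_*(\xi_M)$ are primitive.

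Establishing this primitivity is the one real point, and the main (though not deep) obstacle. The classes $\xi_N$ are primitive because $S^N$ is a suspension, hence a co-$H$-space (Lemma \ref{coHprim}). Primitivity is preserved by pushforward along any map $f \colon Y \to Z$: naturality of the diagonal gives $\Delta_Z \circ f = (f \times f) \circ \Delta_Y$, so $\Delta_{Z*}(f_*y) = (f \times f)_* \Delta_{Y*}(y)$, and expanding via Künneth shows $f_*y$ is primitive once $y$ is. Hence $\alpha_*(\xi_N)$ and $\beta_*(\xi_M)$ are primitive, completing the induction. One subtlety to address along the way is that Theorem \ref{KKun} and Lemma \ref{getCommutator} are phrased for finite complexes whereas $\Omega\Sigma X$ is not finite; this is handled by observing that a map from a sphere into $\Omega\Sigma X$ factors through a finite James filtration $J_s(X)$ (Lemma \ref{jTrunk}), that the commutator restricts compatibly, and that $K$-homology commutes with the relevant colimit, so all the Künneth identifications used above are legitimate.
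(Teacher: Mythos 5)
Your proposal is correct and follows essentially the same route as the paper: reduce to the weight-one case (naturality of $\mathrm{deg}$) plus the single identity $\mathrm{deg}(\langle \alpha,\beta\rangle)=[\mathrm{deg}(\alpha),\mathrm{deg}(\beta)]$, verified on $\xi_{N+M}$ via Lemmas \ref{coHprim} and \ref{getCommutator} and the primitivity of pushforwards of $\xi_N$. Your remark about factoring through a finite stage $J_s(X)$ to justify the K\"unneth identifications is a point of care the paper leaves implicit, but it does not change the argument.
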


\begin{proof} By construction of $\mathscr{B}(\pi_*(A))$, it suffices to show that the restriction of the diagram to the weight 1 component $\mathscr{B}^1(\pi_*(A)) = \pi_*(A)$ commutes, and that all maps respect the bracket operations. By definition, $L^1(\widetilde{K}_*^{\mathrm{TF}}(A)) = \widetilde{K}_*^{\mathrm{TF}}(A)$. It then follows immediately from the definitions of $\PhiPi$ and $\PhiK$ that restricting the left hand side of the diagram to weight 1 components gives the diagram \begin{center}
\begin{tabular}{c}
\xymatrix{ \pi_*(A) \ar[r]^{\nu_*} \ar[d]^{\mathrm{deg}} & \pi_*(\Omega \Sigma X) \ar[d]^{\mathrm{deg}} \\
\mathrm{Hom}(\widetilde{K}_*^{\mathrm{TF}}(S^*),\widetilde{K}_*^{\mathrm{TF}}(A)) \ar[r]^{\nu_*} & \mathrm{Hom}(\widetilde{K}_*^{\mathrm{TF}}(S^*),\widetilde{K}_*^{\mathrm{TF}}(\Omega \Sigma X)), }
\end{tabular}   
\end{center} which commutes, since it just expresses naturality of $\mathrm{deg}$.

It remains to show that all maps respect bracket operations. The maps $\PhiPi$ and $\mathrm{deg}'$ respect the bracket operations by definition, and $\PhiK$ respects bracket operations by construction. We therefore only need show that $\mathrm{deg}$ respects brackets. Let $f \in \pi_N(\Omega \Sigma X)$, and let $g \in \pi_M(\Omega \Sigma X)$. We must show that $\mathrm{deg}(\langle f, g \rangle)$ is the commutator $\mathrm{deg}(f)\mathrm{deg}(g)-(-1)^{NM}\mathrm{deg}(g)\mathrm{deg}(f)$ with respect to the algebra operation on $\mathrm{Hom}(\widetilde{K}_*^{\mathrm{TF}}(S^*),\widetilde{K}_*^{\mathrm{TF}}(\Omega \Sigma X))$. 

Since $\widetilde{K}_*^{\mathrm{TF}}(S^{N+M}) \cong \mathbb{Z}$, it suffices to show that the two homomorphisms agree on the generator $\xi_{N+M}$ (Remark \ref{sphGenChoice}). By Definition \ref{SamelsonDef} and the K\"unneth Theorem (Theorem \ref{KKun}), $$\mathrm{deg}(\langle f, g \rangle) (\xi_{N+M}) = c_* \circ (f_* \otimes g_*) (\xi_N \otimes \xi_M) = c_* \circ (f_*(\xi_N) \otimes g_*(\xi_M)).$$

Spheres of dimension at least 1 are co-$H$ spaces, so by Lemma \ref{coHprim}, $\xi_N$ and $\xi_M$ are primitive. By naturality of the diagonal $f_*(\xi_N)$ and $g_*(\xi_M)$ are still primitive, so by Lemma \ref{getCommutator}, $$c_* \circ (f_*(\xi_N) \otimes g_*(\xi_M)) = f_*(\xi_N)g_*(\xi_M)-(-1)^{NM}g_*(\xi_M)f_*(\xi_N),$$ which by definition of the multiplication on $\mathrm{Hom}(\widetilde{K}_*^{\mathrm{TF}}(S^*),\widetilde{K}_*^{\mathrm{TF}}(\Omega \Sigma X))$ is the result of evaluating $\mathrm{deg}(f)\mathrm{deg}(g)-(-1)^{NM}\mathrm{deg}(g)\mathrm{deg}(f)$ on $\xi_{N+M}$, as required. \end{proof}

We now lift the previous result to $J_s(X)$, thereby producing the first square of the diagram promised at the start of this section. Recall that we write $i_s : J_s(X) \to JX$ for the inclusion, and that by Theorem \ref{JamesConstruction} we have a homotopy equivalence $JX \xrightarrow{\simeq} \Omega \Sigma X$. We will abuse notation and also write $i_s$ for the composite $J_s(X) \to JX \xrightarrow{\simeq} \Omega \Sigma X$.

\begin{corollary} \label{RestrictDgrm} Let $\nu : A \to \Omega \Sigma X$, for spaces $A$ and $X$ having the homotopy type of finite $CW$-complexes, with $X$ $(r-1)$-connected for $r \geq 1$. If $N,s \in \mathbb{Z}^+$ satisfy $N < r(s+1)-1$, then $(i_s)_* : \pi_N(J_s X) \to \pi_N(\Omega \Sigma X)$ is an isomorphism and for each $k \leq s$ there exists a commutative diagram: \begin{center}
\begin{tabular}{c}
\xymatrix{ \mathscr{B}_N^k(\pi_*(A)) \ar[r]^{\widetilde{\PhiPi}} \ar[d]^{\mathrm{deg}'} & \pi_N(J_s X) \ar[d]^{\mathrm{deg}} \\
\mathrm{Hom}(\widetilde{K}_*^{\mathrm{TF}}(S^N),L^k(\widetilde{K}_*^{\mathrm{TF}}(A))) \ar[r]^{\widetilde{\PhiK}} & \mathrm{Hom}(\widetilde{K}_*^{\mathrm{TF}}(S^N),\widetilde{K}_*^{\mathrm{TF}}(J_s X)), } 
\end{tabular}   
\end{center} with $(i_s)_* \circ \widetilde{\PhiPi} = \PhiPi$ and $\mathrm{Hom}(\widetilde{K}_*^{\mathrm{TF}}(S^N),(i_s)_*) \circ \widetilde{\PhiK} = \PhiK$. \end{corollary}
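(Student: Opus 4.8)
The plan is to deduce Corollary~\ref{RestrictDgrm} from Lemma~\ref{hurewiczing} by composing with the inclusion $i_s : J_s(X) \to \Omega \Sigma X$, using the connectivity hypothesis to ensure that nothing is lost in low degrees. The isomorphism statement is exactly part~(1) of Lemma~\ref{jTrunk}: since $X$ is $(r-1)$-connected, $(i_s)_* : \pi_N(J_s X) \to \pi_N(\Omega \Sigma X)$ is an isomorphism whenever $N < r(s+1)-1$, which is precisely our hypothesis. So for every degree $N$ in the range we care about, we may define $\widetilde{\PhiPi}$ on $\mathscr{B}_N^k(\pi_*(A))$ to be $(i_s)_*^{-1} \circ \PhiPi$; this immediately gives $(i_s)_* \circ \widetilde{\PhiPi} = \PhiPi$, and the element of $\pi_N(J_s X)$ so produced is well-defined because $\PhiPi$ lands in the right homotopy group (note every formal bracket of weight $k$ in classes of $\pi_*(A)$ has a well-defined total degree, and we are restricting to those of degree $N$).

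\emph{Next I would} handle the $K$-homology row. Applying Theorem~\ref{JamesConstruction}(2), the map $i_s$ is, after suspending, split injective, so $(i_s)_* : \widetilde{K}_*^{\mathrm{TF}}(J_s X) \to \widetilde{K}_*^{\mathrm{TF}}(\Omega \Sigma X)$ is a split monomorphism. In fact, under the tensor-algebra identification of Lemma~\ref{GetTensor}(1), $\widetilde{K}_*^{\mathrm{TF}}(\Omega \Sigma X) \cong T(\widetilde{K}_*^{\mathrm{TF}}(X))$ and the subgroup $\widetilde{K}_*^{\mathrm{TF}}(J_s X)$ corresponds to $\bigoplus_{i=1}^{s} \widetilde{K}_*^{\mathrm{TF}}(X)^{\otimes i}$, the truncation of the tensor algebra to weights $\leq s$ --- this is the $K$-homological shadow of the fact that $J_s X$ contains the appropriate skeleton of $JX$. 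Now the map $\PhiK$ restricted to the weight $k$ component $\mathrm{Hom}(\widetilde{K}_*^{\mathrm{TF}}(S^N), L^k(\widetilde{K}_*^{\mathrm{TF}}(A)))$ has image in $\mathrm{Hom}(\widetilde{K}_*^{\mathrm{TF}}(S^N), \widetilde{K}_*^{\mathrm{TF}}(X)^{\otimes k})$, because the map $L(\widetilde{K}_*^{\mathrm{TF}}(A)) \to T(\widetilde{K}_*^{\mathrm{TF}}(A)) \to T(\widetilde{K}_*^{\mathrm{TF}}(X)) = \widetilde{K}_*^{\mathrm{TF}}(\Omega \Sigma X)$ preserves weight. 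Since $k \leq s$, this image lies in the image of $(i_s)_*$, so I can define $\widetilde{\PhiK} := \mathrm{Hom}(\widetilde{K}_*^{\mathrm{TF}}(S^N),(i_s)_*)^{-1} \circ \PhiK$ on the weight $k$ component, with the inverse taken onto the image, which is legitimate because $(i_s)_*$ is injective. This gives the required identity $\mathrm{Hom}(\widetilde{K}_*^{\mathrm{TF}}(S^N),(i_s)_*) \circ \widetilde{\PhiK} = \PhiK$.

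\emph{Finally}, to see that the lifted square commutes: apply the injective map $\mathrm{Hom}(\widetilde{K}_*^{\mathrm{TF}}(S^N),(i_s)_*)$ to both composites around the square. Going one way, $\mathrm{Hom}(\ldots,(i_s)_*) \circ \mathrm{deg} \circ \widetilde{\PhiPi} = \mathrm{deg} \circ (i_s)_* \circ \widetilde{\PhiPi} = \mathrm{deg} \circ \PhiPi$ by naturality of $\mathrm{deg}$ and the identity just established. Going the other way, $\mathrm{Hom}(\ldots,(i_s)_*) \circ \widetilde{\PhiK} \circ \mathrm{deg}' = \PhiK \circ \mathrm{deg}'$. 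By Lemma~\ref{hurewiczing} these two agree (restricting that lemma's diagram to the weight $k$, degree $N$ piece), and since $\mathrm{Hom}(\widetilde{K}_*^{\mathrm{TF}}(S^N),(i_s)_*)$ is injective, the lifted square commutes.

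\emph{The main obstacle} is the bookkeeping in the $K$-homology row: one must be careful that $\PhiK$ really does preserve the weight grading and that the weight $k$ component of $T(\widetilde{K}_*^{\mathrm{TF}}(X))$ genuinely lies in $\widetilde{K}_*^{\mathrm{TF}}(J_s X)$ for $k \leq s$ --- i.e.\ that the tensor-algebra filtration matches the James filtration under the identification of Lemma~\ref{GetTensor}. This is morally clear from Theorem~\ref{JamesConstruction}(2) together with Lemma~\ref{GetTensor}(2) (the restriction of $\Jdec$ to $\Sigma X^{\wedge s}$ factors through $\Sigma m_s$), but writing it out cleanly requires invoking the compatibility of $\Jdec$ with the James filtration, and then the rest is a routine chase through injective maps. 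Everything else --- well-definedness of $\widetilde{\PhiPi}$, commutativity --- follows formally once the $K$-homology identification is in hand.
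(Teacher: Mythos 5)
Your argument is the paper's own, step for step: Lemma~\ref{jTrunk}(1) gives the isomorphism on $\pi_N$ and hence the lift $\widetilde{\PhiPi}$; Theorem~\ref{JamesConstruction}(2) together with Lemma~\ref{GetTensor} identifies $(i_s)_*$ on torsion-free $K$-homology with the inclusion of the tensors of length at most $s$, which is used to produce $\widetilde{\PhiK}$; and commutativity of the lifted square follows from Lemma~\ref{hurewiczing}, naturality of $\mathrm{deg}$, and injectivity of $\mathrm{Hom}(\widetilde{K}_*^{\mathrm{TF}}(S^N),(i_s)_*)$. The first and last steps are carried out correctly.

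The justification you give for the middle step, however, is not correct. You claim that $\PhiK$ carries the weight $k$ component into $\mathrm{Hom}(\widetilde{K}_*^{\mathrm{TF}}(S^N),\widetilde{K}_*^{\mathrm{TF}}(X)^{\otimes k})$ because the composite $L(\widetilde{K}_*^{\mathrm{TF}}(A)) \to T(\widetilde{K}_*^{\mathrm{TF}}(A)) \to T(\widetilde{K}_*^{\mathrm{TF}}(X))$ ``preserves weight''. The second map in that composite is the algebra map extending $\nu_* : \widetilde{K}_*^{\mathrm{TF}}(A) \to \widetilde{K}_*^{\mathrm{TF}}(\Omega\Sigma X) \cong T(\widetilde{K}_*^{\mathrm{TF}}(X))$, and $\nu_*$ does \emph{not} land in the weight-one part $\widetilde{K}_*^{\mathrm{TF}}(X)$ unless $\overline{\nu}$ desuspends: the Remark following Theorem~\ref{injectionsBeget} stresses exactly this, and Lemma~\ref{leadingTerms} exists precisely because the extension $\widetilde{(\nu_*)}$ satisfies only $\rho_i\circ\widetilde{(\nu_*)}\circ\iota_k=0$ for $i<k$, i.e.\ it \emph{raises} weight rather than preserving it. Consequently the image of the weight $k$ component lies a priori in $\bigoplus_{i\geq k}\widetilde{K}_*^{\mathrm{TF}}(X)^{\otimes i}$, and the inequality $k\leq s$ bounds the weight from the wrong side for concluding containment in the length-$\leq s$ truncation $\widetilde{K}_*^{\mathrm{TF}}(J_sX)$. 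In fairness, this is also the tersest point of the paper's own proof (which says only ``Since $k \leq s$, we may therefore define $\widetilde{\PhiK}$\dots''), so you have reproduced the intended argument; but the specific reason you supply contradicts the mechanics of Section~\ref{sectionMain}, and the step needs a genuine argument that the relevant images land in $\mathrm{Im}\bigl(\mathrm{Hom}(\widetilde{K}_*^{\mathrm{TF}}(S^N),(i_s)_*)\bigr)$ rather than an appeal to weight preservation.
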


\begin{proof} Consider the diagram of Lemma \ref{hurewiczing}. Lemma \ref{jTrunk} shows that $(i_s)_*$ is an isomorphism on $\pi_N$, so let $\widetilde{\PhiPi}$ be the unique map such that the condition $(i_s)_* \circ \widetilde{\PhiPi} =  \PhiPi$ holds. By Theorem \ref{JamesConstruction} (2) and Lemma \ref{GetTensor}, the map $(i_s)_*: \widetilde{K}_N^{\mathrm{TF}}(J_s(X)) \to \widetilde{K}_N^{\mathrm{TF}}(\Omega \Sigma X)$ is the inclusion of the tensors of length at most $s$. Since $k \leq s$, we may therefore define $\widetilde{\PhiK}$ to be the unique map such that the condition $\mathrm{Hom}(\widetilde{K}_*^{\mathrm{TF}}(S^N),(i_s)_*) \circ \widetilde{\PhiK} =  \PhiK$ holds. Commutativity then follows from Lemma \ref{hurewiczing} by naturality of $\mathrm{deg}$, since $\mathrm{Hom}(\widetilde{K}_*^{\mathrm{TF}}(S^N),(i_s)_*)$ is injective. \end{proof}

\begin{lemma} \label{LieIncludes} Let $V$ be a non-negatively- or $\mathbb{Z}/2$-graded $\mathbb{Z}$-module which is free and finitely generated in each dimension. Then \begin{itemize}
    \item $L(V)$ and $T(V)$ are free $\mathbb{Z}$-modules in every dimension.
    \item The natural map $L(V) \to T(V)$, $[x,y] \mapsto xy -(-1)^{|x||y|}yx$ is an injection onto a summand.
\end{itemize} \end{lemma}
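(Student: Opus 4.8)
The plan is to treat the two bullet points separately, the real content being the freeness of $L(V)$ as a $\mathbb{Z}$-module, from which the splitting follows by the graded Poincar\'e--Birkhoff--Witt theorem; everything else is bookkeeping with direct sums of free abelian groups.

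First I would dispatch $T(V)$. By Definition \ref{tensorAlgebraDef} we have $T(V) = \bigoplus_{k \geq 1} V^{\otimes k}$, and in a fixed internal degree $n$ the weight-$k$ summand $(V^{\otimes k})_n$ is the direct sum of the modules $V_{n_1} \otimes \cdots \otimes V_{n_k}$ over the decompositions $n = n_1 + \cdots + n_k$ admissible for the grading in question (the $n_i$ nonnegative in the non-negatively graded case, lying in $\{0,1\}$ and summing modulo $2$ in the $\mathbb{Z}/2$-graded case). For fixed $n$ and $k$ there are only finitely many such decompositions, and each $V_{n_1} \otimes \cdots \otimes V_{n_k}$ is a tensor product of finitely generated free abelian groups, hence itself finitely generated free; so $(V^{\otimes k})_n$ is finitely generated free. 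Then $T(V)_n = \bigoplus_{k \geq 1}(V^{\otimes k})_n$ is a (possibly infinite) direct sum of free abelian groups, and an arbitrary direct sum of free abelian groups is free, so $T(V)_n$ is free.

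Next, $L(V)$ and the splitting. The one fact I would quote, from \cite[Section 8.5]{NeisendorferBook}, is that when $V$ is a free $\mathbb{Z}$-module the free graded Lie algebra $L(V)$ is free as a $\mathbb{Z}$-module, equipped with a basis of iterated brackets (a Hall- or Lyndon-type basis) which is homogeneous for both the weight and the internal degree, and with each $L^k(V)$ finitely generated in each internal degree. Given this, the same degree-by-degree count as for $T(V)$, applied to $L(V) = \bigoplus_k L^k(V)$, shows $L(V)$ is free in every dimension. For the second bullet, a formal adjunction argument identifies the unital tensor algebra $\mathbb{Z} \oplus T(V)$ with the universal enveloping algebra $U(L(V))$, compatibly with both gradings. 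Since $L(V)$ is $\mathbb{Z}$-free, PBW then furnishes a $\mathbb{Z}$-basis of $U(L(V))$ consisting of ordered monomials in the chosen basis of $L(V)$, with odd basis elements occurring at most once. The natural map $L(V) \to T(V)$ is precisely the inclusion of the span of the length-one such monomials (in particular it is injective); its image is therefore the span of part of a basis of $T(V)$, hence a direct summand, with complement the span of the monomials of length $\geq 2$. Since all these monomials are homogeneous, the splitting respects the weight grading — so it restricts to a splitting $L^k(V) \hookrightarrow V^{\otimes k}$ — and the internal grading.

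The main obstacle is the fact I propose to quote rather than prove: that $L(V)$ is torsion-free over $\mathbb{Z}$. Over a field this is immediate from Milnor--Moore/PBW, but over $\mathbb{Z}$ one must know that imposing the odd-element axioms ($[x,x]=0$ for $|x|$ even and $[[x,x],x]=0$ for $|x|$ odd) in the free object introduces no torsion — for instance the graded Jacobi identity only forces $3[[x,x],x] = 0$ a priori — and that a Hall-type basis carries over to the graded setting. Once $L(V)$ is known to be $\mathbb{Z}$-free, the remaining PBW machinery is insensitive to the ground ring and the rest is routine.
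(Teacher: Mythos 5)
Your route is in substance the same as the paper's for the non-negatively graded case: the paper's entire argument there is a citation to Neisendorfer (Proposition 8.3.1 and p.~282), and what you write out --- freeness of $L(V)$ via a homogeneous Hall-type basis, the identification of $\mathbb{Z}\oplus T(V)$ with $U(L(V))$, and PBW exhibiting $L(V)$ as the span of the length-one monomials in a basis of $T(V)$ --- is exactly the content being cited. Your bookkeeping for $T(V)$ (finitely many degree decompositions for fixed weight, arbitrary direct sums of free abelian groups are free) is fine, and you correctly identify the one nontrivial input, namely $\mathbb{Z}$-freeness of $L(V)$ in the presence of the integral axioms $[x,x]=0$ ($|x|$ even) and $[[x,x],x]=0$ ($|x|$ odd).

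The gap is the $\mathbb{Z}/2$-graded case. You quote the Hall basis and PBW ``for the grading in question,'' but Neisendorfer's Section 8.5 is stated for non-negatively (or $\mathbb{Z}$-) graded modules, not for $\mathbb{Z}/2$-graded ones, so in that case you are citing a theorem where it is not literally proved. This is the half of the statement the paper actually spends its proof on: it introduces the functor $C$ from $\mathbb{Z}/2$-graded to $\mathbb{Z}$-graded modules (placing $V_0$ in an even degree and $V_1$ in an odd degree) and the forgetful functor $U$ in the other direction, checks that both preserve freeness and split injections, and uses the natural isomorphisms $UT(CV)\cong T(V)$ and $UL(CV)\cong L(V)$ to transport the non-negatively graded result. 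The fix for your write-up is easy --- either insert this reduction, or find a reference that proves the integral PBW and Hall-basis theorems directly in the $\mathbb{Z}/2$-graded setting --- but as written the $\mathbb{Z}/2$-graded case, which is the one actually used elsewhere in the paper (for $\widetilde{K}^{\mathrm{TF}}_*$), is not covered by your citation.
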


\begin{proof} The non-negatively graded case is immediate from \cite{NeisendorferBook}, Proposition 8.3.1 and p282. For the $\mathbb{Z}/2$-graded case, first observe that there is a forgetful functor $U$ from $\mathbb{Z}$-graded modules to $\mathbb{Z}/2$-modules which carries $\mathbb{Z}$-graded (Lie) algebras to $\mathbb{Z}/2$-graded (Lie) algebras, and a functor $C$ from $\mathbb{Z}/2$-graded modules to $\mathbb{Z}$-modules which puts $V_0$ in any even dimension and $V_1$ in any odd dimension. Both $C$ and $U$ respect freeness and split injections, and there are natural isomorphisms $UT(CV) \cong T(V)$ and $UL(CV) \cong L(V)$. This implies the $\mathbb{Z}/2$-graded result. \end{proof}

The graded version of Theorem \ref{ungradedWitt} now follows immediately from Hilton's paper:

\begin{theorem}{\cite[Theorem 3.2, 3.3]{Hilton}} \label{gradedWitt} Let $V$ be a torsion-free $\mathbb{Z}$- or $\mathbb{Z}/2$-graded $\mathbb{Z}$-module of total dimension $n$. Then the total dimension of $L^k(V)$ is $W_n(k)$. \qed \end{theorem}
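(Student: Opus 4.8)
The plan is to reduce the statement to the ungraded Witt formula of Theorem \ref{ungradedWitt} (Hilton's Theorem 3.3) by forgetting internal degrees while retaining the weight grading. Since $V$ is torsion-free of finite total dimension it is a free $\mathbb{Z}$-module of rank $n$, and by Lemma \ref{LieIncludes} each $L^k(V)$ is also free; its total dimension is therefore its total rank, which is unchanged upon tensoring with $\mathbb{Q}$. I would accordingly pass to $\mathbb{Q}$-coefficients and compute $\dim_{\mathbb{Q}} L^k(V \otimes \mathbb{Q})$, using $L^k(V) \otimes \mathbb{Q} \cong L^k(V \otimes \mathbb{Q})$.

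Next I would dispose of the $\mathbb{Z}/2$-graded case exactly as in the proof of Lemma \ref{LieIncludes}, via the functors $C$ and $U$ introduced there: $U$ forgets a $\mathbb{Z}$-grading down to a $\mathbb{Z}/2$-grading, $C$ spreads a $\mathbb{Z}/2$-graded module over two integer degrees, both preserve freeness, total dimension and the weight grading, and $U L(C V) \cong L(V)$ as weight-graded Lie algebras. So it suffices to treat a $\mathbb{Z}$-graded torsion-free $V$ of total dimension $n$.

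For such a $V$, choose a homogeneous $\mathbb{Q}$-basis $x_1, \dots, x_n$ of $V \otimes \mathbb{Q}$. By Hilton's basic-product basis theorem \cite[Theorem 3.2]{Hilton}, the basic products of weight $k$ in the $x_i$ form a basis of the weight-$k$ component of the free graded Lie algebra they generate, so $\dim_{\mathbb{Q}} L^k(V \otimes \mathbb{Q})$ is the number of weight-$k$ basic products, which equals $W_n(k)$ by Theorem \ref{ungradedWitt}.

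The only genuine content beyond these citations sits in the last step: Hilton states the basic-product basis theorem for a free Lie ring, and one must check that its proof — spanning deduced from the Jacobi identity and anticommutativity, independence deduced from the split embedding of the free Lie algebra into the tensor algebra recorded in Lemma \ref{LieIncludes} — survives passage to the graded setting and its Koszul signs. This is where I expect the care to concentrate. It is cleanest when the $x_i$ can be taken in even degrees, since then all Koszul signs vanish and $L(V \otimes \mathbb{Q})$ is literally the ungraded free Lie algebra on $n$ generators, so the count is immediate; for the general homogeneous case one carries the signs through the same combinatorial argument, the split injection $L(V) \hookrightarrow T(V)$ of Lemma \ref{LieIncludes} reducing linear independence of the basic products to the corresponding statement in $T(V \otimes \mathbb{Q})$, which is again Hilton's.
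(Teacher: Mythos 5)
The paper offers no argument here beyond the citation to Hilton, so your reconstruction is a reasonable thing to attempt, and its first two steps (passing to $\mathbb{Q}$ using the freeness from Lemma \ref{LieIncludes}, and trading the $\mathbb{Z}/2$-grading for a $\mathbb{Z}$-grading via the functors $C$ and $U$) are fine. The gap is exactly where you predicted the care would concentrate, and it cannot be closed by ``carrying the signs through'': the spanning half of Hilton's argument fails in the presence of odd-degree generators. For $x$ of odd degree, graded antisymmetry imposes no relation on $[x,x]$, and in the tensor algebra $[x,x]=2\,x\otimes x\neq 0$; this element lies in $L^2(V)$ but is not in the span of the weight-2 basic products $[x_i,x_j]$, $i<j$, which involve only mixed monomials. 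Concretely, if $V$ has $n$ homogeneous generators of which $m$ are odd, then $\dim_{\mathbb{Q}} L^2(V\otimes\mathbb{Q})=\binom{n}{2}+m=W_n(2)+m$, and the discrepancy propagates to higher weights (via squares of odd-degree basic products; one can verify this with the graded PBW isomorphism $T(V)\cong U(L(V))\cong S(L^{\mathrm{ev}})\otimes\Lambda(L^{\mathrm{odd}})$ over $\mathbb{Q}$). For instance, for $V=\widetilde{K}_*^{\mathrm{TF}}(S^{1}\vee S^{1})$, both generators are odd and $\dim L^2=3$ while $W_2(2)=1$. So the asserted equality is itself only correct when $V$ is concentrated in even degrees, and no bookkeeping of Koszul signs in Hilton's combinatorics will produce it in general.

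What does survive of your argument is the independence half: each basic product still maps to a signed sum of monomials in $T(V\otimes\mathbb{Q})$ with a distinguished leading monomial of coefficient $\pm 1$, so the $W_n(k)$ basic products of weight $k$ remain linearly independent and $\dim L^k(V)\geq W_n(k)$. That inequality is all the paper uses downstream: the proof of Theorem \ref{K-detection} only needs $\dim L^k \geq W_2(k)$ to feed into Lemma \ref{linearimplieshype}. If you replace the claimed equality by this lower bound, your proof, suitably truncated after the independence step, is correct and suffices for every application in the paper.
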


Let $R$ be a commutative ring with unit. Let $M$ be an $R$-module, and as usual let $T(M)$ denote the tensor algebra on $M$. Let $\iota_k: M^{\otimes k} \to T(M)$ be the inclusion, and let $\rho_k : T(M) \to M^{\otimes k}$ be the projection. Let $\tau: T(M) \to T(M)$ be the composite $\iota_1 \circ \rho_1$. Given an $R$-algebra $A$, and a map $\varphi : M \to A$, we write $\widetilde{\varphi}$ for the induced map $T(M) \to A$, that is, the unique map of algebras such that $\widetilde{\varphi} \circ \iota_1 = \varphi$.

Now, let $M$ and $N$ be $R$-modules, and let $\varphi : M \to T(N)$ be a map. In the proof of Theorem \ref{injectionsBeget}, we will wish to make a `leading terms' style argument. This is made precise in the next Lemma, which compares $\widetilde{\varphi}$ with $\widetilde{\tau \circ \varphi}$. Informally, we think of $\widetilde{\tau \circ \varphi}$ as the `leading terms part' of $\widetilde{\varphi}$.

\begin{lemma} \label{leadingTerms} Let $R$ be a commutative ring with unit. Let $M$ and $N$ be $\mathbb{Z}$- or $\mathbb{Z}/2$-graded $R$-modules. Let $\iota_k: M^{\otimes k} \to T(M)$ be the inclusion, let $\rho_k : T(N) \to N^{\otimes k}$ be the projection, and let $\tau : T(N) \to T(N)$ be as above. Let $\varphi:M \to T(N)$ be a map. Then $\rho_k \circ \widetilde{\varphi} \circ \iota_k = \rho_k \circ \widetilde{\tau \circ \varphi} \circ \iota_k$.
\end{lemma}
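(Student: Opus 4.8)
The plan is to unwind the definitions of $\widetilde{\varphi}$ and $\widetilde{\tau \circ \varphi}$ as algebra maps and track what happens to a pure tensor $m_1 \otimes \cdots \otimes m_k \in M^{\otimes k}$ under each, then compare the weight-$k$ components of the two results. Since $\widetilde{\varphi}$ is the unique algebra homomorphism $T(M) \to T(N)$ with $\widetilde{\varphi} \circ \iota_1 = \varphi$, it is multiplicative for the concatenation product, so $\widetilde{\varphi}(m_1 \otimes \cdots \otimes m_k) = \varphi(m_1) \cdot \varphi(m_2) \cdots \varphi(m_k)$, where the product is concatenation in $T(N)$. Similarly $\widetilde{\tau \circ \varphi}(m_1 \otimes \cdots \otimes m_k) = (\tau\varphi)(m_1) \cdots (\tau\varphi)(m_k)$. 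Now decompose each $\varphi(m_i)$ into its homogeneous weight components $\varphi(m_i) = \sum_{j \geq 1} \varphi_j(m_i)$ with $\varphi_j(m_i) = \iota_j \rho_j \varphi(m_i) \in N^{\otimes j}$ (there is no weight-$0$ term since $T(N) = \bigoplus_{k \geq 1} N^{\otimes k}$ as defined in Definition \ref{tensorAlgebraDef}), so that by definition $(\tau\varphi)(m_i) = \varphi_1(m_i)$, the weight-$1$ part.

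Next I would expand the product $\varphi(m_1) \cdots \varphi(m_k) = \sum \varphi_{j_1}(m_1) \cdots \varphi_{j_k}(m_k)$ over all tuples $(j_1, \dots, j_k)$ with each $j_i \geq 1$. The concatenation of a term in $N^{\otimes j_1}$ with one in $N^{\otimes j_2}$, etc., lands in $N^{\otimes(j_1 + \cdots + j_k)}$. Applying $\rho_k$ kills every summand except those with $j_1 + \cdots + j_k = k$; since each $j_i \geq 1$, the only such tuple is $(1, 1, \dots, 1)$. Hence $\rho_k \circ \widetilde{\varphi} \circ \iota_k$ applied to $m_1 \otimes \cdots \otimes m_k$ equals $\varphi_1(m_1) \cdots \varphi_1(m_k) = (\tau\varphi)(m_1) \cdots (\tau\varphi)(m_k)$, which is exactly $\widetilde{\tau\varphi}(m_1 \otimes \cdots \otimes m_k)$; and since $\widetilde{\tau\varphi}$ already takes values in $N^{\otimes k}$ on the weight-$k$ component (because $\tau\varphi$ has image in $N^{\otimes 1}$), applying $\rho_k$ to it changes nothing. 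This gives the equality on pure tensors, and by $R$-linearity it extends to all of $M^{\otimes k}$, i.e. to $\iota_k$, which is the claim.

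The only care needed is bookkeeping of signs and the $\mathbb{Z}/2$-grading: the concatenation product in the tensor algebra as defined here carries no Koszul sign (the product in Definition \ref{tensorAlgebraDef} is literally concatenation), and $\widetilde{\varphi}$ being an algebra map means strictly $\widetilde{\varphi}(uv) = \widetilde{\varphi}(u)\widetilde{\varphi}(v)$ with no sign, so no sign subtleties actually arise in this particular lemma — the argument is purely combinatorial in the weight grading. I do not anticipate a genuine obstacle here; the one thing to state carefully is that $T(N)$ has no weight-$0$ summand, since this is what forces the tuple $(1,\dots,1)$ to be the unique contributor after applying $\rho_k$. If the convention instead included a unit $R = N^{\otimes 0}$ the statement would fail, so it is worth flagging that we are using the non-unital tensor algebra of Definition \ref{tensorAlgebraDef}.
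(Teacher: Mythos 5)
Your proposal is correct and is essentially the paper's own argument: the paper likewise evaluates $\widetilde{\varphi}$ on a pure tensor, writes $\varphi(v_1)\otimes\cdots\otimes\varphi(v_k) = \tau(\varphi(v_1))\otimes\cdots\otimes\tau(\varphi(v_k)) + (\text{terms of weight} > k)$, and applies $\rho_k$; your tuple-counting expansion just makes the ``terms of weight $>k$'' bookkeeping explicit. Your remark that the argument hinges on the non-unital convention $T(N)=\bigoplus_{k\geq 1}N^{\otimes k}$ is accurate and worth noting, though the paper leaves it implicit.
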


\begin{proof} It suffices to check equality on basic tensors. Let $v \in M^{\otimes k}$ be a basic tensor, so that $v=v_1 \otimes v_2 \otimes \dots \otimes v_k$, for $v_i \in M$. Then $$ \widetilde{\varphi} \circ \iota_k (v) = \widetilde{\varphi} (v_1 \otimes v_2 \otimes \dots \otimes v_k) = \varphi(v_1) \otimes \varphi(v_2) \otimes \dots \otimes \varphi(v_k)$$ $$ = \tau( \varphi(v_1)) \otimes \tau (\varphi(v_2)) \otimes \dots \otimes \tau(\varphi(v_k)) + \textrm{terms of weight $>k$}.$$

Applying $\rho_k$ to both sides yields the result. \end{proof}

\begin{theorem} \label{injectionsBeget} Let $\mathbb{F} = \mathbb{Q}$ or $\mathbb{Z}/p$ for $p$ prime. Let $\nu : A \to \Omega \Sigma X$, for spaces $A$ and $X$ having the homotopy type of finite $CW$-complexes. Let $\overline{\nu} : \Sigma A \to \Sigma X$ be the adjoint of $\nu$. If $$\overline{\nu}_* \otimes \mathbb{F} : \widetilde{K}_*^{\mathrm{TF}}(\Sigma A) \otimes \mathbb{F} \to \widetilde{K}_*^{\mathrm{TF}}(\Sigma X) \otimes \mathbb{F}$$ is an injection, then $$\PhiK \otimes \mathbb{F} : \mathrm{Hom}(\widetilde{K}_*^{\mathrm{TF}}(S^*),L(\widetilde{K}_*^{\mathrm{TF}}(A))) \otimes \mathbb{F} \to \mathrm{Hom}(\widetilde{K}_*^{\mathrm{TF}}(S^*),\widetilde{K}_*^{\mathrm{TF}}(\Omega \Sigma X)) \otimes \mathbb{F}$$ is also an injection.
\end{theorem}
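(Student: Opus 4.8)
The plan is to reduce the statement to a fact about free Lie algebras and tensor algebras over $\mathbb{F}$, using a \emph{leading terms} argument via Lemma~\ref{leadingTerms}. First I would observe that the hypothesis on $\overline{\nu}_* \otimes \mathbb{F}$ can be converted into a hypothesis about $K$-homology of $A$ itself: via the suspension isomorphism $\sigma$, the map $\overline{\nu}_* : \widetilde{K}_*^{\mathrm{TF}}(\Sigma A) \to \widetilde{K}_*^{\mathrm{TF}}(\Sigma X)$ desuspends to a map $\widetilde{K}_*^{\mathrm{TF}}(A) \to \widetilde{K}_*^{\mathrm{TF}}(X)$; since the adjunction unit $\eta$ induces the inclusion of $\widetilde{K}_*^{\mathrm{TF}}(X)$ as the weight-$1$ tensors inside $\widetilde{K}_*^{\mathrm{TF}}(\Omega\Sigma X) \cong T(\widetilde{K}_*^{\mathrm{TF}}(X))$ (Lemma~\ref{GetTensor}), the composite $\widetilde{K}_*^{\mathrm{TF}}(A) \xrightarrow{\nu_*} \widetilde{K}_*^{\mathrm{TF}}(\Omega\Sigma X) \xrightarrow{\rho_1} \widetilde{K}_*^{\mathrm{TF}}(X)$ agrees, up to the suspension isomorphism, with the desuspension of $\overline{\nu}_*$. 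So the hypothesis says: the map $\rho_1 \circ (\nu_* \otimes \mathbb{F}) : \widetilde{K}_*^{\mathrm{TF}}(A)\otimes\mathbb{F} \to \widetilde{K}_*^{\mathrm{TF}}(X)\otimes\mathbb{F}$ is injective. I expect the cleanest route is to first choose a basis of $\widetilde{K}_*^{\mathrm{TF}}(A)\otimes\mathbb{F}$ so that $\nu_*$ sends these generators to elements of $T(\widetilde{K}_*^{\mathrm{TF}}(X)\otimes\mathbb{F})$ whose weight-$1$ components (their ``leading terms'', in the language preceding Lemma~\ref{leadingTerms}) are linearly independent.

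Next I would unwind the definition of $\PhiK$. Recall $\PhiK$ is the pushforward along the composite $L(\widetilde{K}_*^{\mathrm{TF}}(A)) \to T(\widetilde{K}_*^{\mathrm{TF}}(A)) \to \widetilde{K}_*^{\mathrm{TF}}(\Omega\Sigma X) \cong T(\widetilde{K}_*^{\mathrm{TF}}(X))$, where the second arrow is $\widetilde{\nu_*}$, the algebra map induced by $\nu_*$. Since $\mathrm{Hom}(\widetilde{K}_*^{\mathrm{TF}}(S^*), -)$ is exact on torsion-free modules (each $\widetilde{K}_*^{\mathrm{TF}}(S^N)\cong\mathbb{Z}$ is free, so $\mathrm{Hom}$ into it is exact) and commutes with $-\otimes\mathbb{F}$ in the relevant finitely generated setting, it suffices to show the underlying composite $L(\widetilde{K}_*^{\mathrm{TF}}(A)\otimes\mathbb{F}) \to T(\widetilde{K}_*^{\mathrm{TF}}(X)\otimes\mathbb{F})$ is injective. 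By Lemma~\ref{LieIncludes} the inclusion $L(V)\hookrightarrow T(V)$ is split, so it is enough to show that $\widetilde{\nu_*}\otimes\mathbb{F} : T(\widetilde{K}_*^{\mathrm{TF}}(A)\otimes\mathbb{F}) \to T(\widetilde{K}_*^{\mathrm{TF}}(X)\otimes\mathbb{F})$ restricts to an injection on the image of $L$ — in fact I would just prove $\widetilde{\nu_*}\otimes\mathbb{F}$ is \emph{itself} injective, which is cleaner and clearly suffices.

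The heart is the leading-terms argument. Write $\varphi := \nu_*\otimes\mathbb{F} : M \to T(N)$ where $M = \widetilde{K}_*^{\mathrm{TF}}(A)\otimes\mathbb{F}$, $N = \widetilde{K}_*^{\mathrm{TF}}(X)\otimes\mathbb{F}$. By the hypothesis just reformulated, $\tau\circ\varphi = \iota_1\circ\rho_1\circ\varphi : M \to T(N)$ is (after identifying $\rho_1\circ\varphi$ with its image) a split injection of $\mathbb{F}$-modules in each degree. Then $\widetilde{\tau\circ\varphi} : T(M) \to T(N)$ is a map of graded algebras induced by a split injection $M \hookrightarrow N$ of graded $\mathbb{F}$-vector spaces, hence is itself injective (a split injection $M\hookrightarrow N$ induces a split injection $T(M)\hookrightarrow T(N)$, functorially). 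Now Lemma~\ref{leadingTerms} gives, for each $k$, that $\rho_k\circ\widetilde{\varphi}\circ\iota_k = \rho_k\circ\widetilde{\tau\circ\varphi}\circ\iota_k$ as maps $M^{\otimes k}\to N^{\otimes k}$. I would then run a filtration argument: suppose $x\in T(M)$, $x\neq 0$, and let $k$ be minimal with $\rho_k(x)\neq 0$. Then $\rho_k(\widetilde{\varphi}(x)) = \rho_k(\widetilde{\varphi}(\iota_k\rho_k(x))) + (\text{contributions from weight }>k\text{ parts of }x,\text{ which land in weight}>k) = \rho_k(\widetilde{\tau\circ\varphi}(\iota_k\rho_k(x)))$, and since $\widetilde{\tau\circ\varphi}$ is injective and weight-preserving on the graded pieces it is induced from, this is nonzero; hence $\widetilde{\varphi}(x)\neq 0$. (One must be slightly careful that ``weight'' here is the tensor-length grading, which is preserved by $\widetilde{\tau\circ\varphi}$ but only non-decreased by $\widetilde{\varphi}$; the minimality of $k$ is exactly what makes the lowest-weight piece survive.) This shows $\widetilde{\varphi}=\widetilde{\nu_*}\otimes\mathbb{F}$ is injective, and composing with the split inclusion $L\hookrightarrow T$ and applying $\mathrm{Hom}(\widetilde{K}_*^{\mathrm{TF}}(S^*),-)$ finishes the proof.

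The main obstacle I anticipate is bookkeeping rather than conceptual: carefully justifying that $\mathrm{Hom}(\widetilde{K}_*^{\mathrm{TF}}(S^*),-)$ commutes with $-\otimes\mathbb{F}$ and with the relevant injections (this needs the modules to be finitely generated free in each degree, which follows from $A$, $X$ being finite complexes together with Lemma~\ref{LieIncludes}), and making the ``desuspension of $\overline{\nu}_*$ equals $\rho_1\circ\nu_*$'' identification precise using Lemma~\ref{GetTensor}. There is also a mild subtlety in the $\mathbb{Z}/2$-graded setting — signs in the Koszul convention when comparing the Lie bracket $[x,y]\mapsto xy-(-1)^{|x||y|}yx$ with the tensor algebra — but since we only use that $L\hookrightarrow T$ is a split monomorphism (Lemma~\ref{LieIncludes}) and never manipulate brackets directly here, the signs do not actually enter the argument.
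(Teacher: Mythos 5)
Your proposal is correct and follows essentially the same route as the paper's proof: reduce to injectivity of the algebra extension $\widetilde{(\nu_*)}\otimes\mathbb{F}$ via the split inclusion $L\hookrightarrow T$ (Lemma \ref{LieIncludes}), identify the hypothesis with injectivity of $(\rho_1\circ\nu_*)\otimes\mathbb{F}$ (the paper does this via $\overline{\nu}=\mathrm{ev}\circ\Sigma\nu$ and Corollary \ref{desuspendedEvaluation}, which is the precise form of your $\eta$/weight-one identification), and then run the weight filtration using Lemma \ref{leadingTerms}. The only cosmetic difference is that the paper phrases the filtration step as ``$\rho_i\circ\widetilde{(\nu_*)}\circ\iota_k=0$ for $i<k$, so it suffices to check each $\rho_k\circ\widetilde{(\nu_*)}\circ\iota_k$,'' which is the same leading-terms argument you give.
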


\begin{remark} In the case where $\overline{\nu}$ is a suspension $\Sigma \zeta$, we have a diagram \begin{center}
\begin{tabular}{c}
\xymatrix{ \Omega \Sigma A \ar[r]^{\Omega \overline{\nu}} & \Omega \Sigma X \\
A \ar[u]^{\eta} \ar[ur]^{\nu} \ar[r]^{\zeta} & X, \ar[u]^{\eta}}
\end{tabular}
\end{center} so in particular $\nu_*$ factors through the weight 1 component $\widetilde{K}^\mathrm{TF}_*(X)$ of the tensor algebra decomposition of $\widetilde{K}^\mathrm{TF}_*(\Omega \Sigma X)$. This dramatically simplifies the proof, removing the need for Lemma \ref{leadingTerms}. In practice this is not a reasonable assumption - for example, the map  $\mu:S^3 \vee S^5 \to \Sigma \mathbb{C} P^2$ of Example \ref{CPN} (which plays the role of $\overline{\nu}$) does not desuspend. \end{remark}

\begin{proof} In this proof, for a space $Y$, we will identify the algebras $T(\widetilde{K}^\mathrm{TF}_*(Y))$ and $\widetilde{K}^\mathrm{TF}_*(\Omega \Sigma Y)$, omitting the isomorphism $S^{-1} \Jdec_*$ of Lemma \ref{GetTensor}. We defined $\PhiK$ to be the pushforward along a certain map $L(\widetilde{K}_*^{\mathrm{TF}}(A)) \to \widetilde{K}_*^{\mathrm{TF}}(\Omega \Sigma X)$. Call this map ${\PhiK}'$. It suffices to prove that ${\PhiK}' \otimes \mathbb{F}$ is an injection.

The triangle identities for the adjunction $\Sigma \dashv \Omega$ give a commutative diagram \begin{center}
\begin{tabular}{c}
\xymatrix{ \Omega \Sigma A \ar[r]^{\Omega \overline{\nu}} & \Omega \Sigma X \\
A. \ar[u]^{\eta} \ar[ur]^{\nu}
}
\end{tabular}
\end{center} 

Since ${\PhiK}'$is the unique map of Lie algebras extending $\nu$, we have a commuting diagram \begin{center}
\begin{tabular}{c}
\xymatrix{T(\widetilde{K}^\mathrm{TF}_*( A)) \cong \widetilde{K}^\mathrm{TF}_*(\Omega \Sigma A) \ar[r]^(.65){(\Omega \overline{\nu})_*} & \widetilde{K}^\mathrm{TF}_*(\Omega \Sigma X)\\
L(\widetilde{K}^\mathrm{TF}_*(A)), \ar@{^{(}->}[u] \ar[ur]_{{\PhiK}'}
}
\end{tabular}
\end{center} where we note that that by Lemma \ref{LieIncludes}, the natural map $L(\widetilde{K}^\mathrm{TF}_*(A)) \to T(\widetilde{K}^\mathrm{TF}_*(A))$ is an injection onto a summand. It therefore suffices to show that $(\Omega \overline{\nu})_* \otimes \mathbb{F}$ is an injection.


Let $\widetilde{(\nu_*)}$ denote the extension of $\nu_*$ to $T(\widetilde{K}^\mathrm{TF}_*(A))$, so that $\widetilde{(\nu_*)} = (\Omega \overline{\nu})_*$ (modulo the isomorphism $S^{-1} \Jdec_*$, as above). Since $(\rho_i \circ \widetilde{(\nu_*)} \circ \iota_k) =0$ for $i<k$, it further suffices to show that $(\rho_k \circ \widetilde{(\nu_*)} \circ \iota_k) \otimes \mathbb{F}$ is an injection for each $k$. By Lemma \ref{leadingTerms}, with $M = \widetilde{K}^\mathrm{TF}_*(A)$ and $N' = \widetilde{K}^\mathrm{TF}_*(X)$, we have that $\rho_k \circ \widetilde{(\nu_*)} \circ \iota_k  = \rho_k \circ \widetilde{(\tau \circ \nu_*)} \circ \iota_k.$

As previously, let $\mathrm{ev}: \Sigma \Omega Y \to Y$ denote the evaluation map. The following diagram commutes: \begin{center}
\begin{tabular}{c}
\xymatrix{
\Sigma A \ar[dr]_{\overline{ \nu}} \ar[r]^{\Sigma \nu} & \Sigma \Omega \Sigma X \ar[d]^{\mathrm{ev}} \\
& \Sigma X.
}
\end{tabular}
\end{center}

The hypothesis therefore implies that the composite $(\mathrm{ev}_* \circ \Sigma \nu_*) \otimes \mathbb{F}$ is an injection. Desuspending, we have that $(S^{-1}\mathrm{ev}_* \circ \nu_*) \otimes \mathbb{F}$ is an injection. By Lemma \ref{desuspendedEvaluation}, $$(\rho_1 \circ \nu_*) \otimes \mathbb{F}: \widetilde{K}^\mathrm{TF}_*(A) \otimes \mathbb{F} \to \widetilde{K}^\mathrm{TF}_*(X) \otimes \mathbb{F}$$ is an injection of $\mathbb{F}$-vector spaces. Thus, the image $(\rho_1 \circ \nu_*) (\widetilde{K}^\mathrm{TF}_*(A)) \otimes \mathbb{F} \subset \widetilde{K}^\mathrm{TF}_*(X) \otimes \mathbb{F}$ is a direct summand. Thus, the extension $\widetilde{(\tau \circ \nu_*)} \otimes \mathbb{F}$ is an injection, and $\widetilde{(\tau \circ \nu_*)} \widetilde{K}^\mathrm{TF}_*(A)^{\otimes k} \subset \widetilde{K}^\mathrm{TF}_*(X)^{\otimes k}$ for each $k$. This implies that $\rho_k \circ \widetilde{(\tau \circ \nu_*)} \circ \iota_k$ is an injection for each $k$, as required. \end{proof}

The following corollary, which lifts the injectivity back to $J_s(X)$, is immediate from Theorem \ref{injectionsBeget} and Lemma \ref{RestrictDgrm}.

\begin{corollary} \label{liftedInjectionsBeget} Let $\mathbb{F} = \mathbb{Q}$ or $\mathbb{Z}/p$ for $p$ prime. Let $\nu : A \to \Omega \Sigma X$, for spaces $A$ and $X$ having the homotopy type of finite $CW$-complexes, with $X$ $(r-1)$-connected for $r \geq 1$. Suppose that $N,s,k \in \mathbb{Z}^+$ satisfy $k \leq s$, so that the map $\widetilde{\PhiK}$ is as in Corollary \ref{RestrictDgrm}. If $$\overline{\nu}_* \otimes \mathbb{F} : \widetilde{K}_*^{\mathrm{TF}}(\Sigma A) \otimes \mathbb{F} \to \widetilde{K}_*^{\mathrm{TF}}(\Sigma X) \otimes \mathbb{F}$$ is an injection, then $$\widetilde{\PhiK} \otimes \mathbb{F} : \mathrm{Hom}(\widetilde{K}_*^{\mathrm{TF}}(S^N),L^k(\widetilde{K}_*^{\mathrm{TF}}(A))) \otimes \mathbb{F} \to \mathrm{Hom}(\widetilde{K}_*^{\mathrm{TF}}(S^N),\widetilde{K}_*^{\mathrm{TF}}(J_s X)) \otimes \mathbb{F}$$ is also an injection. \qed \end{corollary}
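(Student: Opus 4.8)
The plan is to derive the statement purely formally by combining the two ingredients that have just been set up: the injectivity result of Theorem~\ref{injectionsBeget} over $\mathbb{F}$, and the comparison of $\PhiK$ with $\widetilde{\PhiK}$ recorded in Corollary~\ref{RestrictDgrm}. Concretely, Corollary~\ref{RestrictDgrm} furnishes, for $k \leq s$, the commuting identity $\mathrm{Hom}(\widetilde{K}_*^{\mathrm{TF}}(S^N),(i_s)_*) \circ \widetilde{\PhiK} = \PhiK$ (restricted to the weight-$k$ piece and to degree $N$), so $\widetilde{\PhiK}$ is, up to the fixed inclusion induced by $(i_s)_*$, just a restriction of $\PhiK$. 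Tensoring everything with the field $\mathbb{F}$ is exact, so it preserves all the maps and relations in sight.

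The key steps, in order, are as follows. First I would invoke Theorem~\ref{injectionsBeget}: from the hypothesis that $\overline{\nu}_* \otimes \mathbb{F}$ is injective, conclude that $\PhiK \otimes \mathbb{F} : \mathrm{Hom}(\widetilde{K}_*^{\mathrm{TF}}(S^*),L(\widetilde{K}_*^{\mathrm{TF}}(A))) \otimes \mathbb{F} \to \mathrm{Hom}(\widetilde{K}_*^{\mathrm{TF}}(S^*),\widetilde{K}_*^{\mathrm{TF}}(\Omega \Sigma X)) \otimes \mathbb{F}$ is injective. Second, I would note that this map respects the weight grading on $L(\widetilde{K}_*^{\mathrm{TF}}(A))$ and the tensor-length grading on $\widetilde{K}_*^{\mathrm{TF}}(\Omega \Sigma X) \cong T(\widetilde{K}_*^{\mathrm{TF}}(A))$ coming from Lemma~\ref{GetTensor}, and restrict to the degree-$N$, weight-$k$ summand; injectivity is inherited by any subspace, so $\PhiK \otimes \mathbb{F}$ remains injective on $\mathrm{Hom}(\widetilde{K}_*^{\mathrm{TF}}(S^N),L^k(\widetilde{K}_*^{\mathrm{TF}}(A))) \otimes \mathbb{F}$. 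Third, I would use Corollary~\ref{RestrictDgrm}: since $\mathrm{Hom}(\widetilde{K}_*^{\mathrm{TF}}(S^N),(i_s)_*)$ is injective (it is the inclusion of tensors of length $\leq s$, which splits, and $\mathrm{Hom}(\widetilde{K}_*^{\mathrm{TF}}(S^N),-)$ is left exact) and stays injective after $\otimes \mathbb{F}$ (as the cokernel is free, so the sequence stays short exact), the factorization $\mathrm{Hom}(\widetilde{K}_*^{\mathrm{TF}}(S^N),(i_s)_*) \circ \widetilde{\PhiK} = \PhiK$ forces $\widetilde{\PhiK} \otimes \mathbb{F}$ to be injective, because a composite being injective implies the first map is.

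The only genuinely delicate point is the third step: one must be sure that $\mathrm{Hom}(\widetilde{K}_*^{\mathrm{TF}}(S^N),(i_s)_*)$ remains injective after tensoring with $\mathbb{F}$ when $\mathbb{F} = \mathbb{Z}/p$. This is where one needs that $(i_s)_*$ on $K$-homology is the inclusion of the length-$\leq s$ tensors, hence a split injection of free finitely generated $\mathbb{Z}$-modules in each degree (using Lemma~\ref{LieIncludes} applied to $T(\widetilde{K}_*^{\mathrm{TF}}(A))$, or directly the freeness statement there), so that applying $\mathrm{Hom}(\widetilde{K}_*^{\mathrm{TF}}(S^N),-)$ and then $-\otimes \mathbb{F}$ both preserve the split injection. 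Everything else is bookkeeping: the statement is, as the excerpt already advertises, immediate from Theorem~\ref{injectionsBeget} and Corollary~\ref{RestrictDgrm}, and the proof should simply say so after recording that the relevant inclusion stays injective mod $p$. I expect the whole argument to be three or four sentences.
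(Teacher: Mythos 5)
Your argument is correct and takes essentially the same route as the paper, which simply declares the corollary immediate from Theorem~\ref{injectionsBeget} and Corollary~\ref{RestrictDgrm}. The one ``delicate point'' you flag is in fact unnecessary: since $\PhiK \otimes \mathbb{F}$ factors as $(\mathrm{Hom}(\widetilde{K}_*^{\mathrm{TF}}(S^N),(i_s)_*) \otimes \mathbb{F}) \circ (\widetilde{\PhiK} \otimes \mathbb{F})$ on the weight-$k$, degree-$N$ summand, injectivity of the composite already forces injectivity of the first factor $\widetilde{\PhiK} \otimes \mathbb{F}$, whether or not the second map remains injective mod $p$.
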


We have now established all that we will need to know about this `first square'.

\subsection{Maps derived from the universal coefficient isomorphism} \label{sq2}

In this subsection we will build the second square of our diagram. This square is really just the Universal Coefficient theorem (Corollary \ref{usableUCTs}) in a different form. We will write $\mathrm{deg}$ for both $K$-homological and $K$-theoretic degree.

\begin{lemma} \label{UCTmap} Let $Y$ be a space having the homotopy type of a finite $CW$-complex. There exists an isomorphism $\mathscr{U}$ making the following diagram commute.

\begin{center}
\begin{tabular}{c}
\xymatrix{
\pi_N(Y) \ar[d]^{\mathrm{deg}} \ar@{=}[r] & \pi_N(Y) \ar[d]^{\mathrm{deg}} \\
\mathrm{Hom}(\widetilde{K}_*^{\mathrm{TF}}(S^N),\widetilde{K}_*^{\mathrm{TF}}(Y)) \ar[r]^{\mathscr{U}} & \mathrm{Hom}(\widetilde{K}^*_{\mathrm{TF}}(Y),\widetilde{K}^*_{\mathrm{TF}}(S^N)).
} 
\end{tabular}   
\end{center} \end{lemma}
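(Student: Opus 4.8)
The plan is to produce $\mathscr{U}$ directly from the finite‑complex universal coefficient isomorphism of Corollary \ref{usableUCTs}(2), applied to $Y$ and to the spheres $S^N$, and then to check that the resulting square commutes by naturality. Concretely, part (2) of Corollary \ref{usableUCTs} gives, for $Y$ finite, an isomorphism $\beta_Y : \widetilde{K}^n_{\mathrm{TF}}(Y) \xrightarrow{\cong} \mathrm{Hom}(\widetilde{K}_n^{\mathrm{TF}}(Y),\mathbb{Z})$ (assembled over the two values of $n$ mod $2$ into an isomorphism of $\mathbb{Z}/2$‑graded groups), and likewise $\beta_{S^N}$. Since $\widetilde{K}_*^{\mathrm{TF}}(S^N)\cong\mathbb{Z}$ is free, dualising once more, $\beta_{S^N}$ furnishes an identification $\widetilde{K}_*^{\mathrm{TF}}(S^N)\cong \mathrm{Hom}(\widetilde{K}^*_{\mathrm{TF}}(S^N),\mathbb{Z})$; equivalently $\widetilde{K}^*_{\mathrm{TF}}(S^N)$ is the $\mathbb{Z}$‑linear dual of $\widetilde{K}_*^{\mathrm{TF}}(S^N)$ in the appropriate degree. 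The map $\mathscr{U}$ is then defined to be the composite
\[
\mathrm{Hom}(\widetilde{K}_*^{\mathrm{TF}}(S^N),\widetilde{K}_*^{\mathrm{TF}}(Y)) \xrightarrow{\ \mathrm{Hom}(\mathrm{id},\beta_Y)\ } \mathrm{Hom}\bigl(\widetilde{K}_*^{\mathrm{TF}}(S^N),\mathrm{Hom}(\widetilde{K}^*_{\mathrm{TF}}(Y),\mathbb{Z})\bigr) \xrightarrow{\ \cong\ } \mathrm{Hom}\bigl(\widetilde{K}^*_{\mathrm{TF}}(Y),\mathrm{Hom}(\widetilde{K}_*^{\mathrm{TF}}(S^N),\mathbb{Z})\bigr),
\]
followed by post‑composition with the (inverse of the) identification $\mathrm{Hom}(\widetilde{K}_*^{\mathrm{TF}}(S^N),\mathbb{Z})\cong \widetilde{K}^*_{\mathrm{TF}}(S^N)$. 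Here the middle arrow is the standard "swap/adjunction" isomorphism $\mathrm{Hom}(A,\mathrm{Hom}(B,\mathbb{Z}))\cong\mathrm{Hom}(B,\mathrm{Hom}(A,\mathbb{Z}))$ for finitely generated free $\mathbb{Z}$‑modules, which is an isomorphism precisely because everything in sight is finitely generated and free (using finiteness of $Y$ and of $S^N$ — one should remark that $\widetilde{K}^*_{\mathrm{TF}}$ and $\widetilde{K}_*^{\mathrm{TF}}$ of a finite complex are finitely generated free). Each arrow in the composite is an isomorphism, so $\mathscr{U}$ is.

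It then remains to check $\mathscr{U}\circ\mathrm{deg}=\mathrm{deg}$. This is pure naturality: for $f\in\pi_N(Y)$, $\mathrm{deg}(f)=f_*$ on $K$‑homology and $\mathrm{deg}(f)=f^*$ on $K$‑theory, and the universal coefficient isomorphism $\beta$ of Corollary \ref{usableUCTs} is natural in the space (it is induced by the natural UCT short exact sequence of Theorem \ref{originalUCT}, and passing to torsion‑free quotients is functorial). Tracing an element $f^*\in\widetilde{K}^*_{\mathrm{TF}}(Y)$‑to‑$\widetilde{K}^*_{\mathrm{TF}}(S^N)$ through the definition of $\mathscr{U}$, one gets the homomorphism sending a class $\kappa\in\widetilde{K}^*_{\mathrm{TF}}(Y)$ to the functional $\xi\mapsto\langle \beta_Y(\kappa), f_*(\xi)\rangle=\langle f^*\beta_Y(\kappa),\xi\rangle$ — which by naturality of $\beta$ is exactly $\beta_{S^N}(f^*\kappa)$, i.e. $f^*$ under the identifications. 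So both ways round the square send $f$ to $f^*$, as required.

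The main (minor) obstacle is bookkeeping of the $\mathbb{Z}/2$‑grading and of which degree of $\widetilde{K}^*$ pairs with which degree of $\widetilde{K}_*$ and of $\widetilde{K}_*^{\mathrm{TF}}(S^N)$ (which is concentrated in the single degree $N\bmod 2$); one must set up the pairing $\widetilde{K}^i_{\mathrm{TF}}(Y)\otimes\widetilde{K}_i^{\mathrm{TF}}(Y)\to\mathbb{Z}$ degreewise and be careful that the "swap" isomorphism respects it. There is no real analytic content — the only place finiteness of $Y$ is genuinely used is to invoke Corollary \ref{usableUCTs}(2) (so that the $\mathrm{Ext}$ term vanishes and $\beta$ is an isomorphism, not merely a surjection with torsion‑free kernel) and to guarantee finite generation so the double‑dual and swap maps are isomorphisms. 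Everything else is formal diagram‑chasing with natural transformations.
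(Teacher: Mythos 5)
Your proposal is correct and is essentially the paper's own argument: the paper likewise defines $\mathscr{U}(\beta)$ by conjugating the dualisation $\beta\mapsto\mathrm{Hom}(\beta,\mathbb{Z})$ by the universal coefficient isomorphisms of Corollary \ref{usableUCTs}(2), uses finite generation and freeness of $\widetilde{K}_*^{\mathrm{TF}}$ to see this is an isomorphism, and deduces commutativity of the square from naturality of the universal coefficient theorem. Your "swap" formulation is just a repackaging of the same construction.
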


\begin{proof} For $\beta: \widetilde{K}_*^{\mathrm{TF}}(S^N) \to \widetilde{K}_*^{\mathrm{TF}}(Y)$, let $\mathscr{U}(\beta)$ be the unique map making the following diagram commute

\begin{center}
\begin{tabular}{c}
\xymatrix{
\widetilde{K}^*_{\mathrm{TF}}(Y) \ar[d]^{\mathscr{U}(\beta)} \ar[r]^(.4){\cong} & \mathrm{Hom}(\widetilde{K}_*^{\mathrm{TF}}(Y), \mathbb{Z}) \ar[d]^{\mathrm{Hom}(\beta, \mathbb{Z})}  \\
\widetilde{K}^*_{\mathrm{TF}}(S^N) \ar[r]^(.4){\cong}  & \mathrm{Hom}(\widetilde{K}_*^{\mathrm{TF}}(S^N), \mathbb{Z})
} 
\end{tabular}   
\end{center} where the isomorphisms are those of Corollary \ref{usableUCTs}. Since $\widetilde{K}_*^{\mathrm{TF}}(Y)$ is a finitely generated free $\mathbb{Z}$-module, $\beta \mapsto \mathrm{Hom}(\beta , \mathbb{Z})$ is an isomorphism, so $\mathscr{U}$ is also an isomorphism. Commutativity of the diagram from the statement of this lemma is by naturality of Lemma \ref{usableUCTs}. \end{proof}

\begin{corollary} \label{UCTp} Let $Y$ be a space having the homotopy type of a finite $CW$-complex. For a $\mathbb{Z}$-module $M$, let $\tau_p : M \to M \otimes \Zp$ be the natural map. There exists an injection $\mathscr{U}'$ making the following diagram commute.

\begin{center}
\begin{tabular}{c}
\xymatrix@C=5pt{
\pi_N(Y) \ar[d]^{\mathrm{deg}} \ar@/^2.2pc/[ddr] \ar@{=}[rr] & & \pi_N(Y) \ar[d]^{\mathrm{deg}} \ar@/^2.5pc/[ddr] & \\
\mathrm{Hom}(\widetilde{K}_*^{\mathrm{TF}}(S^N),\widetilde{K}_*^{\mathrm{TF}}(Y)) \ar[dd]^{\tau_p} & & \hbox to 7em{\hss$\displaystyle\mathrm{Hom}_{\psi \mathrm{-Mod}}(\widetilde{K}^*_{\mathrm{TF}}(Y),\widetilde{K}^*_{\mathrm{TF}}(S^N))$\hss} \ar[dd]^(.3){\tau_p} \\
& \mathrm{Im}(\tau_p \circ \mathrm{deg}) \ar@{_{(}->}[dl] \ar'[r]^(.7){\mathscr{U}'}[rr] & & \mathrm{Im}(\tau_p \circ \mathrm{deg}) \ar@{_{(}->}[dl] \\
\mathrm{Hom}(\widetilde{K}_*^{\mathrm{TF}}(S^N),\widetilde{K}_*^{\mathrm{TF}}(Y)) \otimes \Zp & & \hbox to 3em{\hss$\displaystyle\mathrm{Hom}_{\psi \mathrm{-Mod}}(\widetilde{K}^*_{\mathrm{TF}}(Y),\widetilde{K}^*_{\mathrm{TF}}(S^N)) \otimes \Zp .$\hss}  & }
\end{tabular}   
\end{center}
\end{corollary}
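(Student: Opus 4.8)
The plan is to construct $\mathscr{U}'$ by restriction from the isomorphism $\mathscr{U}$ of Lemma \ref{UCTmap}, and to combine this with Lemma \ref{splitPsiInclusion} to land in $\mathrm{Hom}_{\psi\mathrm{-Mod}}$ rather than just $\mathrm{Hom}$. First I would record the two facts that make the diagram work: (i) by Lemma \ref{UCTmap}, the square with top edge $\mathrm{id}_{\pi_N(Y)}$, vertical maps $\mathrm{deg}$, and bottom edge $\mathscr{U}$ commutes, and $\mathscr{U}$ is an isomorphism of $\mathbb{Z}$-modules; (ii) every map of the form $\mathrm{deg}(f) = f^*$ for $f \in \pi_N(Y)$ is a map of $\psi$-modules, since it is induced by a map of spaces (as noted in Section \ref{psiModSection}). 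Fact (ii) says precisely that the composite $\mathscr{U} \circ \mathrm{deg} : \pi_N(Y) \to \mathrm{Hom}(\widetilde{K}^*_{\mathrm{TF}}(Y),\widetilde{K}^*_{\mathrm{TF}}(S^N))$ factors through the subgroup $\mathrm{Hom}_{\psi\mathrm{-Mod}}(\widetilde{K}^*_{\mathrm{TF}}(Y),\widetilde{K}^*_{\mathrm{TF}}(S^N))$, which by Lemma \ref{splitPsiInclusion} (with $N$ there equal to the torsion-free group $\widetilde{K}^*_{\mathrm{TF}}(S^N)$, which is free hence torsion-free) is a direct summand of the ambient $\mathrm{Hom}$. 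This is why the right-hand column of the diagram makes sense: the image of $\mathrm{deg}$ on the right genuinely lands in the $\psi$-module Hom.

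Next I would apply $-\otimes\Zp$. Because $\mathrm{Hom}_{\psi\mathrm{-Mod}}(\widetilde{K}^*_{\mathrm{TF}}(Y),\widetilde{K}^*_{\mathrm{TF}}(S^N)) \hookrightarrow \mathrm{Hom}(\widetilde{K}^*_{\mathrm{TF}}(Y),\widetilde{K}^*_{\mathrm{TF}}(S^N))$ is a split injection of $\mathbb{Z}$-modules (Lemma \ref{splitPsiInclusion}), tensoring with $\Zp$ preserves this injection, and similarly $\mathscr{U}\otimes\Zp$ remains an isomorphism since $\mathscr{U}$ is an isomorphism of $\mathbb{Z}$-modules. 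The left-hand column of the target diagram is the composite $\tau_p \circ \mathrm{deg} : \pi_N(Y) \to \mathrm{Hom}(\widetilde{K}_*^{\mathrm{TF}}(S^N),\widetilde{K}_*^{\mathrm{TF}}(Y)) \otimes \Zp$, and we simply define $\mathrm{Im}(\tau_p \circ \mathrm{deg})$ on the right to be the image of the corresponding composite into $\mathrm{Hom}_{\psi\mathrm{-Mod}}(\widetilde{K}^*_{\mathrm{TF}}(Y),\widetilde{K}^*_{\mathrm{TF}}(S^N)) \otimes \Zp$. The map $\mathscr{U}'$ is then the restriction of $\mathscr{U}\otimes\Zp$ to these images; it is well-defined and surjective because the outer square commutes (so $\mathscr{U}\otimes\Zp$ carries one image onto the other), and it is injective because $\mathscr{U}\otimes\Zp$ is injective. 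The two curved arrows in the diagram are just the factorizations of $\tau_p\circ\mathrm{deg}$ through its image, so their commutativity is a tautology.

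The one point requiring a little care — and the main (very mild) obstacle — is bookkeeping around the two different inclusions of the $\psi$-module Hom: the target diagram as drawn has the right-hand vertical map $\tau_p$ going from $\mathrm{Hom}_{\psi\mathrm{-Mod}}(\dots)$ to $\mathrm{Hom}_{\psi\mathrm{-Mod}}(\dots)\otimes\Zp$, whereas a priori the diagram of Lemma \ref{UCTmap} lives in the plain $\mathrm{Hom}$. One must therefore check that $\mathscr{U}$ carries the image of $\mathrm{deg}$ into $\mathrm{Hom}_{\psi\mathrm{-Mod}}$ — which is exactly fact (ii) above — and that this persists after $\otimes\Zp$, which is automatic since $\mathrm{Hom}_{\psi\mathrm{-Mod}} \hookrightarrow \mathrm{Hom}$ is a split (hence flat-exact) injection. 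No genuine obstruction arises; the content is entirely the naturality of the universal coefficient isomorphism (Corollary \ref{usableUCTs}) together with the splitness from Lemma \ref{splitPsiInclusion}. I would write the proof as: define $\mathscr{U}'$ by restricting $\mathscr{U}\otimes\Zp$; cite Lemma \ref{UCTmap} for commutativity of the outer square, fact (ii) plus Lemma \ref{splitPsiInclusion} for the claim that everything lands in the $\psi$-module subgroup and that this subgroup is a summand so $\otimes\Zp$ behaves well; conclude injectivity of $\mathscr{U}'$ from injectivity of $\mathscr{U}\otimes\Zp$ and commutativity of the curved triangles by construction.
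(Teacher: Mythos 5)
Your proposal is correct and follows essentially the same route as the paper: both rest on Lemma \ref{UCTmap} for the commuting square with $\mathscr{U}$ an isomorphism, on the observation that maps of spaces induce $\psi$-module maps so that $\mathscr{U}\circ\mathrm{deg}$ lands in $\mathrm{Hom}_{\psi\mathrm{-Mod}}$, and on Lemma \ref{splitPsiInclusion} to ensure that the inclusion of the $\psi$-module Hom remains injective after $\otimes\,\Zp$, whence $\mathscr{U}'$ is obtained by restricting $\mathscr{U}\otimes\Zp$ to the images. No gaps.
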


\begin{proof} By Lemma \ref{UCTmap}, we have a commutative diagram \begin{center}
\begin{tabular}{c}
\xymatrix{
\pi_N(Y) \ar[d]^{\mathrm{deg}} \ar@{=}[r] & \pi_N(Y) \ar[d]^{\mathrm{deg}} \\
\mathrm{Hom}(\widetilde{K}_*^{\mathrm{TF}}(S^N),\widetilde{K}_*^{\mathrm{TF}}(Y)) \ar[r]^{\mathscr{U}} & \mathrm{Hom}(\widetilde{K}^*_{\mathrm{TF}}(Y),\widetilde{K}^*_{\mathrm{TF}}(S^N)).
} 
\end{tabular}   
\end{center} with $\mathscr{U}$ an isomorphism, so $\mathscr{U} \otimes \Zp$ is also an isomorphism. By Lemma \ref{splitPsiInclusion}, the map $$\mathrm{Hom}_{\psi \mathrm{-Mod}}(\widetilde{K}^*_{\mathrm{TF}}(Y),\widetilde{K}^*_{\mathrm{TF}}(S^N))  \otimes \Zp \to \mathrm{Hom}(\widetilde{K}^*_{\mathrm{TF}}(Y),\widetilde{K}^*_{\mathrm{TF}}(S^N))  \otimes \Zp$$ is an injection. Maps of spaces induce maps of $\psi$-modules on $K$-theory, so the image of $\mathscr{U} \circ \mathrm{deg}$ is contained in $\mathrm{Hom}_{\psi \mathrm{-Mod}}(\widetilde{K}^*_{\mathrm{TF}}(Y),\widetilde{K}^*_{\mathrm{TF}}(S^N))$, and hence there exists a map $\mathscr{U}'$ making the following diagram commute: \begin{center}
\begin{tabular}{c}
\xymatrix{
\mathrm{Im}(\tau_p \circ \mathrm{deg}) \ar@{^{(}->}[d] \ar@{.>}[r]^(.35){\mathscr{U}'} & \mathrm{Hom}_{\psi \mathrm{-Mod}}(\widetilde{K}^*_{\mathrm{TF}}(Y),\widetilde{K}^*_{\mathrm{TF}}(S^N))  \otimes \Zp \ar@{^{(}->}[d] \\
\mathrm{Hom}(\widetilde{K}_*^{\mathrm{TF}}(S^N),\widetilde{K}_*^{\mathrm{TF}}(Y)) \otimes \Zp \ar[r]^{\mathscr{U} \otimes \Zp \ \cong}& \mathrm{Hom}(\widetilde{K}^*_{\mathrm{TF}}(Y),\widetilde{K}^*_{\mathrm{TF}}(S^N))  \otimes \Zp.
}
\end{tabular}
\end{center}

Both vertical maps are injections, so $\mathscr{U}'$ has the required properties. \end{proof}

\subsection{Pulling back along classes defined on $S^3$} \label{sq3}

Let $f \in \pi_j(S^3)$, and let $N \geq 3$. Then, for $\omega \in \pi_N(Y)$, the composite $$ S^{N+j-3} \xrightarrow{\Sigma^{N - 3} f} S^{N} \xrightarrow{\omega} Y$$ is defined. The class $\omega \circ \Sigma^{N - 3} f$ lies in $\pi_{M-1}(Y)$, where $M-1=N + j -3$.

Thus motivated, we define the map $f_\Sigma^* : \pi_*(Y) \to \pi_*(Y)$ on $\omega \in \pi_N(Y)$ by setting $f_\Sigma^* (\omega) = (\Sigma^{N-3} f)^* \omega= \omega \circ \Sigma^{N - 3} f$. In words, $f_\Sigma^*$ pulls classes back along the appropriate suspension of $f$. Strictly speaking, $f_\Sigma^*$ is only a partial map, because it is undefined on $\pi_N$ for $N \leq 2$, but this will be unimportant.

Recall the definition of the double $e$-invariant $\de$ (Definition \ref{eDouble}). On $\pi_N(Y)$, we have by definition that $f_\Sigma^* = (\Sigma^{N-3} f)^*$. By Lemma \ref{eTimesD} we have a commuting square: \begin{center}
\begin{tabular}{c}
\xymatrix{
\pi_N(Y) \ar[rr]^{f_\Sigma^*} \ar[d]_{\mathrm{deg}} & & \pi_{N+j-3}(Y) \ar[d]^{\de} \\
\mathrm{Hom}_{\psi \mathrm{-Mod}}(\widetilde{K}^*_{\mathrm{TF}}(Y),\widetilde{K}^*_{\mathrm{TF}}(S^N)) \ar[rr]^{\theta(\Sigma^{N-3} f)} & & \mathrm{Ext}_{\psi \mathrm{-Mod}}(\widetilde{K}^*_{\mathrm{TF}}(Y),\widetilde{K}^*_{\mathrm{TF}}(S^{N+j-2})).
}
\end{tabular}
\end{center}

Mimicking the convention for $f_\Sigma^*$, let $$\theta_\Sigma(f) : \mathrm{Hom}_{\psi \mathrm{-Mod}}(\widetilde{K}^*_{\mathrm{TF}}(Y),\widetilde{K}^*_{\mathrm{TF}}(S^*)) \to \mathrm{Ext}_{\psi \mathrm{-Mod}}(\widetilde{K}^*_{\mathrm{TF}}(Y),\widetilde{K}^*_{\mathrm{TF}}(S^{*+j-2}))$$ be the map which is defined to be equal to $\theta(\Sigma^{N-3} f)$ on the degree $N$ component $\mathrm{Hom}_{\psi \mathrm{-Mod}}(\widetilde{K}^*_{\mathrm{TF}}(Y),\widetilde{K}^*_{\mathrm{TF}}(S^N))$ of $\mathrm{Hom}_{\psi \mathrm{-Mod}}(\widetilde{K}^*_{\mathrm{TF}}(Y),\widetilde{K}^*_{\mathrm{TF}}(S^*))$.

\begin{lemma} \label{lastStageWellDef} Let $p$ be a prime, and let $f \in \pi_j(S^3)$ with $e (f)$ defined. If $pf=0$, then there exists a map $\theta_\Sigma^p(f)$ making the following diagram commute for all $N$: \begin{center}
\begin{tabular}{c}
\xymatrix{
\pi_N(Y) \ar[d]_{\mathrm{deg}} \ar[r]^{f_\Sigma^*} & \pi_{N+j-3}(Y) \ar[d]^{\de} \\
\mathrm{Hom}_{\psi \mathrm{-Mod}}(\widetilde{K}^*_{\mathrm{TF}}(Y),\widetilde{K}^*_{\mathrm{TF}}(S^N)) \ar[d]_{\tau_p} \ar[r]^{\theta_\Sigma(f)} & \mathrm{Ext}_{\psi \mathrm{-Mod}}(\widetilde{K}^*_{\mathrm{TF}}(Y),\widetilde{K}^*_{\mathrm{TF}}(S^{N+j-2})) \\
\mathrm{Hom}_{\psi \mathrm{-Mod}}(\widetilde{K}^*_{\mathrm{TF}}(Y),\widetilde{K}^*_{\mathrm{TF}}(S^N))  \otimes \Zp. \ar@{.>}[ur]^{\theta_\Sigma^p(f)} &
}
\end{tabular}
\end{center}
\end{lemma}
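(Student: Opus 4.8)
The plan is to factor the map $\theta_\Sigma(f)$ (which is already known to fit in the top rectangle by Lemma~\ref{eTimesD}) through the mod-$p$ reduction $\tau_p$. This amounts to showing that $\theta_\Sigma(f)$ kills $p \cdot \mathrm{Hom}_{\psi\mathrm{-Mod}}(\widetilde{K}^*_{\mathrm{TF}}(Y),\widetilde{K}^*_{\mathrm{TF}}(S^*))$, so that it descends to the quotient by that subgroup, which is exactly the target of $\tau_p$. Once we know $\theta_\Sigma(f) = \theta^p_\Sigma(f) \circ \tau_p$ for some (then necessarily unique) $\theta^p_\Sigma(f)$, commutativity of the lower triangle is automatic and commutativity of the outer square follows from the square already established before the statement.

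First I would unwind the definitions degree by degree. Fix $N$; the degree-$N$ piece of $\theta_\Sigma(f)$ is $\theta(\Sigma^{N-3}f)$, which by Lemma~\ref{eTimesD} (and the definitions of $\theta_0, \theta_{-1}, \theta$ preceding it) is the pair of pullback maps $x \mapsto x \cdot e(\Sigma^{N-3}f)$ and $x \mapsto x \cdot e(\Sigma^{N-2}f)$, i.e. the Baer sum / pullback pairing $\mathrm{Hom}_{\psi\mathrm{-Mod}}(-,-) \otimes \mathrm{Ext}_{\psi\mathrm{-Mod}}(-,-) \to \mathrm{Ext}_{\psi\mathrm{-Mod}}(-,-)$ applied against the $e$-invariant of a suspension of $f$. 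This pairing is $\mathbb{Z}$-bilinear, so $p \cdot (x \cdot e(\Sigma^{N-3}f)) = x \cdot (p \cdot e(\Sigma^{N-3}f))$. Hence it suffices to show that $p \cdot e(\Sigma^{N-3}f) = 0$ and $p \cdot e(\Sigma^{N-2}f) = 0$ in the relevant $\mathrm{Ext}_{\psi\mathrm{-Mod}}$ groups.

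This is where the hypothesis $pf = 0$ enters, and it is the one genuine point to check. The $e$-invariant is natural and additive: a suspension is a homomorphism $[S^j, S^3] \to [S^{j+1}, S^4]$, so $e(\Sigma^{N-3}f)$ is the image of $e(f)$ under the homomorphism $T$ of Lemma~\ref{eOfSuspension} (applied $\lfloor (N-3)/2\rfloor$-fold, possibly after one unsuspended step), and $e$ itself is additive in $f$ because the extension represented by $f + g$ is the Baer sum of those represented by $f$ and $g$. Therefore $p\cdot e(\Sigma^{N-3}f) = e(\Sigma^{N-3}(pf)) = e(0) = 0$, and likewise for $\Sigma^{N-2}f$; one only needs that $e(f)$ is even defined, which is part of the hypothesis, and that $e$ is a homomorphism on the subgroup of classes with defined $e$-invariant (an order-$p$ class $f$ lies in this subgroup precisely when it induces the trivial map on $\widetilde{K}^*$, which is forced since $\widetilde{K}^*(S^j) \to \widetilde{K}^*(S^3)$ lands in a torsion-free group hit by an order-$p$ map). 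Assembling these degreewise vanishing statements gives $p \cdot \theta_\Sigma(f) = 0$, hence the factorization; uniqueness of $\theta^p_\Sigma(f)$ follows since $\tau_p$ is surjective, and its naturality/degree-preserving property is inherited from $\theta_\Sigma(f)$.

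The main obstacle, such as it is, is bookkeeping rather than mathematics: one must be careful that $\theta_\Sigma(f)$ is really additive in the second slot (the $e$-invariant) and that suspension commutes with this structure, since the double $e$-invariant $\de$ mixes $e(f)$ and $e(\Sigma f)$ living in different $\mathrm{Ext}$-groups, and the Bott isomorphism does not commute with Adams operations. But Lemma~\ref{eOfSuspension} is stated precisely to handle this, so the argument is short once the definitions are unpacked; the substantive content has already been front-loaded into Lemma~\ref{eTimesD} and the $\psi$-module formalism.
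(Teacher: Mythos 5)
Your proposal is correct and follows essentially the same route as the paper's (much terser) proof: the paper likewise observes that $pf=0$ forces $p\,\de(\Sigma^{N-3}f)=0$, hence by bilinearity of the pullback pairing $\theta_\Sigma(f)$ kills $p$-divisible elements and factors uniquely through $\tau_p$. Your additional justifications (additivity of $e$ on classes inducing zero on $\widetilde{K}^*$, and the identification of $\ker\tau_p$ with the $p$-multiples) are exactly the details the paper leaves implicit.
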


\begin{proof} Since $pf=0$, we have that $p\de(\Sigma^{N-3}f)=0$ for all $N$, which implies that $\theta_\Sigma(f)$ vanishes on $p$-divisible elements, so there exists a unique map $\theta_\Sigma^p(f)$ making the diagram commute, as required.
\end{proof}

\begin{lemma} \label{eigenvalues} Let $X$ be a finite $CW$-complex. Let $\lambda_\ell^X$ be the largest eigenvalue of the rational Adams operation $$\psi^\ell \otimes \mathbb{Q} : \widetilde{K}^0(X) \otimes \mathbb{Q} \to \widetilde{K}^0(X) \otimes \mathbb{Q}.$$ Then, for $i \geq 0$ \begin{itemize}
    \item the largest eigenvalue of $\psi^\ell \otimes \mathbb{Q}$ on $\widetilde{K}^0(\Sigma^{2i} J_s(X)) \otimes \mathbb{Q}$ is $\ell^i(\lambda_\ell^X)^s$, and
    \item the largest eigenvalue of $\psi^\ell \otimes \mathbb{Q}$ on $\widetilde{K}^0(\Sigma^{2i+1} J_s(X)) \otimes \mathbb{Q}$ is $\ell^i\lambda_\ell^{\Sigma X}(\lambda_\ell^X)^{s-1}$.
\end{itemize}
\end{lemma}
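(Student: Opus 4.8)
The plan is to reduce the computation to the $K$-theory of smash powers of $X$ via the James splitting, and then to a finite-dimensional eigenvalue computation via the $K$-theory K\"unneth theorem, where the answer is read off from the $\mathbb{Z}/2$-grading.

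First I would invoke Theorem \ref{JamesConstruction}(2), which gives $\Sigma J_s(X) \simeq \bigvee_{j=1}^s \Sigma X^{\wedge j}$; suspending this yields homotopy equivalences $\Sigma^{2i+\varepsilon} J_s(X) \simeq \bigvee_{j=1}^s \Sigma^{2i+\varepsilon} X^{\wedge j}$ for $i\geq 1$ and $\varepsilon\in\{0,1\}$, as well as for $i=0,\ \varepsilon=1$. Since these are homotopy equivalences of spaces they induce isomorphisms of $\psi$-modules on $\widetilde{K}^0(\ \ )\otimes\mathbb{Q}$, and $\psi^\ell$ preserves the wedge decomposition, so the largest eigenvalue on $\widetilde{K}^0(\Sigma^{2i+\varepsilon} J_s(X))\otimes\mathbb{Q}$ is the maximum over $1\leq j\leq s$ of the largest eigenvalue on $\widetilde{K}^0(\Sigma^{2i+\varepsilon} X^{\wedge j})\otimes\mathbb{Q}$. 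The one case not directly of this form, $\widetilde{K}^0(J_s(X))\otimes\mathbb{Q}$ (i.e. $i=\varepsilon=0$), is recovered by one further application of Bott periodicity to $\Sigma^2 J_s(X)$.

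Next I would strip off the suspensions. Iterating the relation $\psi^\ell_{\Sigma^2 Z}=\ell\cdot\psi^\ell_Z$ modulo Bott, which is immediate from the fact that $\psi^\ell$ acts as $\ell^n$ on $\widetilde{K}^0(S^{2n})$, identifies $\widetilde{K}^0(\Sigma^{2i} X^{\wedge j})\otimes\mathbb{Q}$ with $\widetilde{K}^0(X^{\wedge j})\otimes\mathbb{Q}$ and $\widetilde{K}^0(\Sigma^{2i+1} X^{\wedge j})\otimes\mathbb{Q}$ with $\widetilde{K}^{-1}(X^{\wedge j})\otimes\mathbb{Q}=\widetilde{K}^0(\Sigma X^{\wedge j})\otimes\mathbb{Q}$, in each case rescaling every eigenvalue of $\psi^\ell$ by $\ell^i$. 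Then the $K$-theory K\"unneth theorem (Theorem \ref{KUpKun}) — which is $\psi$-equivariant because the external product commutes with Adams operations — gives an isomorphism of $\mathbb{Z}/2$-graded $\psi$-modules $\widetilde{K}^*_{\mathrm{TF}}(X^{\wedge j})\otimes\mathbb{Q}\cong(\widetilde{K}^*_{\mathrm{TF}}(X)\otimes\mathbb{Q})^{\otimes j}$, on which $\psi^\ell$ acts as the $j$-fold tensor power of $\psi^\ell$. The eigenvalues of such a tensor power are exactly the products of $j$ eigenvalues, one taken from each factor, and the degree-$0$ (resp. degree-$1$) summand $\widetilde{K}^0(X^{\wedge j})$ (resp. $\widetilde{K}^{-1}(X^{\wedge j})$) is spanned by those joint eigenvectors built from an even (resp. odd) number of factors from the odd part $\widetilde{K}^{-1}(X)=\widetilde{K}^0(\Sigma X)$.

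The final step is the maximization, which is the only place requiring real care and which I expect to be the main obstacle. All rational Adams eigenvalues in sight are nonnegative powers of $\ell$, hence at least $1$, so the largest product of $j$ factor-eigenvalues subject to the parity constraint is obtained by taking each factor as large as possible: on the degree-$0$ part this means all $j$ factors from $\widetilde{K}^0(X)$, giving $(\lambda_\ell^X)^j$, and on the degree-$1$ part one factor from $\widetilde{K}^0(\Sigma X)$ and the other $j-1$ from $\widetilde{K}^0(X)$, giving $\lambda_\ell^{\Sigma X}(\lambda_\ell^X)^{j-1}$; the maximum over $j\leq s$ is attained at $j=s$. Combining with the $\ell^i$ rescaling produces $\ell^i(\lambda_\ell^X)^s$ in the even case and $\ell^i\lambda_\ell^{\Sigma X}(\lambda_\ell^X)^{s-1}$ in the odd case. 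The delicate point is to confirm that no parity-admissible combination using extra factors from $\widetilde{K}^{-1}(X)$ yields a strictly larger product; via the Chern character identification $\widetilde{K}^0(Y)\otimes\mathbb{Q}\cong\bigoplus_n\widetilde{H}^{2n}(Y;\mathbb{Q})$ with $\psi^\ell$ acting by $\ell^n$, this comes down to a comparison of cohomological degrees of $X$, and it is here that one uses the structure of the spaces $X$ to which the lemma is applied.
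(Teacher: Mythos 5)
Your route --- James splitting, $K$-theory K\"unneth, eigenvalues of tensor products of Adams operations --- is exactly the paper's, and the step you flag as the delicate point is precisely where the paper's own proof is thinnest: it writes $\widetilde{K}^0_{\mathrm{TF}}(\Sigma^{2i}J_s(X)) \cong \bigoplus_{t=1}^{s}\widetilde{K}^0_{\mathrm{TF}}(S^{2i})\otimes\widetilde{K}^0_{\mathrm{TF}}(X)^{\otimes t}$, silently discarding the mixed-parity summands of $(\widetilde{K}^*_{\mathrm{TF}}(X)^{\otimes t})_0$ that you are worried about. You are right to be worried, and the obstacle cannot be overcome from the hypotheses: the lemma is stated for an arbitrary finite complex $X$, so no appeal to ``the structure of the spaces to which it is applied'' is available, and the claimed exact equalities can genuinely fail. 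Take $X=S^2\vee S^5$: then $\lambda_\ell^X=\ell$, but $X^{\wedge 2}$ contains $S^5\wedge S^5=S^{10}$ as a wedge summand, which contributes the eigenvalue $\ell^5>(\lambda_\ell^X)^2$ to $\widetilde{K}^0(X^{\wedge 2})\otimes\mathbb{Q}\subset \widetilde{K}^0(J_2(X))\otimes\mathbb{Q}$. (A second, smaller inaccuracy shared by your write-up and the paper's multiplicativity claim: on an odd-odd pair the external product lands in $\widetilde{K}^{-2}\cong\widetilde{K}^0$ via Bott periodicity, with which $\psi^\ell$ does not commute, so the resulting eigenvalue is the product of the factor eigenvalues divided by a power of $\ell$; this only shrinks eigenvalues and does not rescue the equality.)

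The gap is therefore in the lemma as stated rather than specifically in your argument, and it is harmless downstream: in Lemma \ref{lastStageInjection} and Theorem \ref{Ndgrm} the conclusion is only ever invoked in the form ``the largest eigenvalue of $\psi^\ell\otimes\mathbb{Q}$ on $\widetilde{K}^0(J_s(X))\otimes\mathbb{Q}$, resp.\ $\widetilde{K}^0(\Sigma J_s(X))\otimes\mathbb{Q}$, is at most $\lambda_\ell^{s}$,'' where $\lambda_\ell=\mathrm{max}(\lambda_\ell^X,\lambda_\ell^{\Sigma X})$. Your argument already proves this inequality (with the extra factor $\ell^i$ for higher suspensions): every joint eigenvalue is, up to division by Bott factors, a product of at most $s$ factor eigenvalues, each drawn from $\widetilde{K}^0(X)$ or $\widetilde{K}^{-1}(X)=\widetilde{K}^0(\Sigma X)$ and hence bounded by $\lambda_\ell$; since all eigenvalues are powers $\ell^n$ with $n\geq 0$, the product is at most $\lambda_\ell^{s}$. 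So the correct completion of your proof is to weaken the two equalities in the statement to the single upper bound $\ell^i\lambda_\ell^{s}$; nothing else in the paper changes.
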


\begin{proof} When $i \geq 1$, Theorem \ref{JamesConstruction} gives that $\Sigma J_s(X) \simeq \Sigma \bigvee_{t=1}^s X^{\wedge t}$, so $\Sigma^{2i} J_s(X) \simeq S^{2i} \wedge \bigvee_{t=1}^s X^{\wedge t}$, and $\Sigma^{2i+1} J_s(X) \simeq S^{2i} \wedge \Sigma X \wedge \bigvee_{t=1}^{s-1} X^{\wedge t}$. By the K\"unneth theorem (Theorem \ref{KUpKun}), this implies isomorphisms of rings $$\widetilde{K}^0_{\mathrm{TF}}(\Sigma^{2i} J_s(X)) \cong \bigoplus_{t=1}^s \widetilde{K}^0_{\mathrm{TF}}(S^{2i}) \otimes \widetilde{K}^0_{\mathrm{TF}}(X)^{\otimes t}, \mathrm{ for } i \geq 1,$$ and $$\widetilde{K}^0_{\mathrm{TF}}(\Sigma^{2i+1} J_s(X)) \cong \bigoplus_{t=0}^{s-1} \widetilde{K}^0_{\mathrm{TF}}(S^{2i}) \otimes \widetilde{K}^0_{\mathrm{TF}}(\Sigma X) \otimes  \widetilde{K}^0_{\mathrm{TF}}(X)^{\otimes t} \mathrm{ for } i \geq 0.$$

The K\"unneth isomorphism of Theorem \ref{KUpKun} is given by the external product of $K$-theory. Since the Adams operations are ring homomorphisms, the above isomorphisms are also isomorphisms of $\psi$-modules. In particular, the Adams operations on the left are the tensor product of the corresponding operations on the right.

These decompositions hold for $\widetilde{K}^0_{\mathrm{TF}}$, so they also hold for $\mathbb{Q} \otimes \widetilde{K}^0$, and the remaining problem is to determine the largest eigenvalue of the relevant tensor products of Adams operations. The eigenvalues of a tensor product of linear endomorphisms are precisely the products of the eigenvalues. The operation $\psi^\ell$ acts on $S^{2i}$ by multiplication by $\ell^i$. Together, these observations imply the result. \end{proof}

\begin{lemma} \label{lastStageInjection} Let $p$ be an odd prime. Let $X$ be an $(r-1)$-connected finite $CW$-complex. Let $N, s \in \mathbb{Z}^+$. Consider the diagram of Lemma \ref{lastStageWellDef} for $Y=J_s(X)$ and $f=f_{p,j} \in \pi_{2j(p-1)+2}(S^3)$, the map of Theorem \ref{sphereInput}:  \begin{center}
\begin{tabular}{c}
\xymatrix{
\pi_N(J_s(X)) \ar[d]_{\mathrm{deg}} \ar[r]^{f_\Sigma^*} & \pi_{N+2j(p-1)-1}(J_s(X)) \ar[d]^{\de} \\
\mathrm{Hom}_{\psi \mathrm{-Mod}}(\widetilde{K}^*_{\mathrm{TF}}(J_s(X)),\widetilde{K}^*_{\mathrm{TF}}(S^N)) \ar[d]_{\tau_p} \ar[r]^{\theta_\Sigma(f)} & \mathrm{Ext}_{\psi \mathrm{-Mod}}(\widetilde{K}^*_{\mathrm{TF}}(J_s(X)),\widetilde{K}^*_{\mathrm{TF}}(S^{N+2j(p-1)})) \\
\mathrm{Hom}_{\psi \mathrm{-Mod}}(\widetilde{K}^*_{\mathrm{TF}}(J_s(X)),\widetilde{K}^*_{\mathrm{TF}}(S^N))  \otimes \Zp. \ar[ur]^{\theta_\Sigma^p(f)} &
}
\end{tabular}
\end{center} For $\ell \in \mathbb{Z}^+$, let $\lambda_\ell^Y$ be the largest eigenvalue of $\psi^\ell \otimes \mathbb{Q}$ on $\widetilde{K}^0(Y) \otimes \mathbb{Q}$, and let $\lambda_\ell = \mathrm{max}(\lambda_\ell^X, \lambda_\ell^{\Sigma X})$. If there exists $\ell \in \mathbb{Z}^+$ such that $\ell^{j(p-1)+\frac{N-1}{2}}>\lambda_\ell^{s}$ then $\mathrm{Ker}(\de \circ f^*_\Sigma) \subset \mathrm{Ker}(\tau_p \circ \mathrm{deg})$, and hence the restriction of $\theta_\Sigma^p(f)$ to $\mathrm{Im}(\tau_p \circ \mathrm{deg})$ is an injection.
\end{lemma}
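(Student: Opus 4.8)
The plan is to prove the kernel inclusion $\mathrm{Ker}(\de \circ f_\Sigma^*) \subseteq \mathrm{Ker}(\tau_p \circ \mathrm{deg})$ first, and then to read off the injectivity statement from it by a short diagram chase. For the chase: since the diagram of Lemma \ref{lastStageWellDef} commutes and $\theta_\Sigma(f) = \theta_\Sigma^p(f) \circ \tau_p$, we have $\de \circ f_\Sigma^* = \theta_\Sigma^p(f) \circ \tau_p \circ \mathrm{deg}$. Thus if $y \in \mathrm{Im}(\tau_p \circ \mathrm{deg})$, say $y = \tau_p(\mathrm{deg}(x))$, satisfies $\theta_\Sigma^p(f)(y) = 0$, then $\de(f_\Sigma^*(x)) = 0$, so $x \in \mathrm{Ker}(\de \circ f_\Sigma^*) \subseteq \mathrm{Ker}(\tau_p \circ \mathrm{deg})$ and hence $y = 0$; so the restriction of $\theta_\Sigma^p(f)$ to $\mathrm{Im}(\tau_p \circ \mathrm{deg})$ is injective.

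For the inclusion itself I would argue by contraposition: fix $x \in \pi_N(J_s(X))$ with $\tau_p(\mathrm{deg}(x)) \neq 0$ (we may assume $N \geq 3$, since $f_\Sigma^*$ is undefined below that), and show $\de(f_\Sigma^*(x)) \neq 0$. First I would translate the hypothesis: by the universal coefficient isomorphism of Lemma \ref{UCTmap} (equivalently, the vertical maps of Corollary \ref{UCTp}), $\tau_p(\mathrm{deg}(x)) \neq 0$ is equivalent to the assertion that the image of $x^* \colon \widetilde{K}^*_{\mathrm{TF}}(J_s(X)) \to \widetilde{K}^*_{\mathrm{TF}}(S^N)$ is not contained in $p\,\widetilde{K}^*_{\mathrm{TF}}(S^N)$, i.e. that it contains $u\,\widetilde{K}^*_{\mathrm{TF}}(S^N)$ for an integer $u$ prime to $p$. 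This is precisely the $p$-divisibility hypothesis required of the map $g$ in Selick's Theorem \ref{Sthm}, with the exponent of $p$ equal to $0$, which is strictly less than $t = 1$.

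Then I split on the parity of $N$. Since $2j(p-1)$ is even, exactly one of $S^{N+2j(p-1)}, S^{N+2j(p-1)+1}$ is odd-dimensional, so exactly one of $e(f_\Sigma^*(x))$ and $e(\Sigma(f_\Sigma^*(x)))$ automatically vanishes, and it suffices to show the other is nonzero. For $N$ even I apply Theorem \ref{Sthm} with $Y = J_s(X)$, $g = x \colon S^N \to J_s(X)$ and $f' = \Sigma^{N-3}f_{p,j} \colon S^{N+2j(p-1)-1} \to S^N$, which is a map $S^{2m-1} \to S^{2n}$ with $n = N/2 < m = N/2 + j(p-1)$; for $N$ odd I apply it with $Y = \Sigma J_s(X)$, $g = \Sigma x \colon S^{N+1} \to \Sigma J_s(X)$ and $f' = \Sigma^{N-2}f_{p,j}$, using $g \circ f' = \Sigma\big(f_\Sigma^*(x)\big)$. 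In both cases $f'$ is a suspension of Gray's class $f_{p,j}$, so $e(f') = e(f_{p,j}) = -1/p$ by Theorem \ref{sphereInput} and the stable invariance of the $e$-invariant (Lemma \ref{QZstab}); hence $p^{t-1}e(f') \neq 0$ with $t = 1$. The last hypothesis of Theorem \ref{Sthm} -- that $\ell^m$ not be an eigenvalue of $\psi^\ell \otimes \mathbb{Q}$ on $\widetilde{K}^0(Y) \otimes \mathbb{Q}$ for some $\ell$ -- is supplied by the standing assumption: choose $\ell$ with $\ell^{j(p-1)+(N-1)/2} > \lambda_\ell^s$ (such $\ell$ is at least $2$, since the inequality fails for $\ell = 1$ as $\lambda_1 = 1$); since $m > j(p-1) + (N-1)/2$ in either parity, $\ell^m > \lambda_\ell^s$, while by Lemma \ref{eigenvalues} the largest eigenvalue of $\psi^\ell \otimes \mathbb{Q}$ on $\widetilde{K}^0(Y) \otimes \mathbb{Q}$ is $(\lambda_\ell^X)^s$ when $Y = J_s(X)$ and $\lambda_\ell^{\Sigma X}(\lambda_\ell^X)^{s-1}$ when $Y = \Sigma J_s(X)$, both at most $\lambda_\ell^s$. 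Theorem \ref{Sthm} then gives $e(g \circ f') \neq 0$, so $\de(f_\Sigma^*(x)) \neq 0$, as needed.

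The main obstacle is organizational rather than conceptual: one must match the source and target dimensions of the various iterated suspensions of $f_{p,j}$ to the even-sphere conventions of Theorem \ref{Sthm}, keep straight which component of $\de$ carries the information as the parity of $N$ changes, and -- most delicately -- verify that $\tau_p(\mathrm{deg}(x)) \neq 0$ really does translate, under the universal coefficient theorem, into exactly the non-$p$-divisibility of $\mathrm{Im}(g^*)$ that Theorem \ref{Sthm} consumes. Once those translations are in place the argument is a direct assembly of Theorem \ref{Sthm}, Theorem \ref{sphereInput}, Lemma \ref{eigenvalues}, and the universal coefficient material of Section \ref{KFacts}.
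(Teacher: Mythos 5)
Your proof is correct and follows essentially the same route as the paper's: the same case split on the parity of $N$, the same choices of $f'$ and $g$ in Selick's Theorem \ref{Sthm}, the same eigenvalue bound via Lemma \ref{eigenvalues}, and the same use of Theorem \ref{sphereInput} with Lemma \ref{QZstab}. The only cosmetic difference is that you argue the contrapositive (nonvanishing of $\tau_p \circ \mathrm{deg}$ implies nonvanishing of $\de \circ f_\Sigma^*$) where the paper applies the contrapositive of Theorem \ref{Sthm} to an element of $\mathrm{Ker}(\de \circ f_\Sigma^*)$; these are logically identical.
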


\begin{proof} First note that $pf=0$ by Theorem \ref{sphereInput}, so $\theta_\Sigma^p(f)$ is well-defined by Lemma \ref{lastStageWellDef}. Let $\omega \in \pi_N(J_s(X))$. Suppose that $\omega \in \mathrm{Ker}(\de \circ f^*_\Sigma)$, that is, that the $\de$-invariant of the composite $$S^{N+2j(p-1)-1} \xrightarrow{ \Sigma^{N-3} f} S^{N} \xrightarrow{\omega} J_s(X)$$ is trivial. By Lemma \ref{eOfSuspension}, this implies that $\Sigma^i(\omega \circ \Sigma^{N-3} f)$ has trivial $e$-invariant for all $i$. In particular, $e(\Sigma \omega \circ \Sigma^{N-2} f)$ and $e(\omega \circ \Sigma^{N-3} f)$ are both 0.

By Lemma \ref{eigenvalues}, the largest eigenvalue of $\psi^\ell \otimes \mathbb{Q}$ on $\widetilde{K}^0(J_s(X)) \otimes \mathbb{Q}$ is at most $\lambda_\ell^s$, and the largest eigenvalue of $\psi^\ell \otimes \mathbb{Q}$ on $\widetilde{K}^0(\Sigma J_s(X)) \otimes \mathbb{Q}$ is also at most $\lambda_\ell^s$. We now divide into cases, based on the parity of $N$.

CASE 1 ($N$ even): Write $N=2n$. Let $f'=\Sigma^{N-3} f$ and $g=\omega$ in Theorem \ref{Sthm}. The domain of $ \omega \circ \Sigma^{N-3} f$ is $S^{M-1}$, where $M-1=N + 2j(p-1) -1$, so $M$ is even, as is required. To check the eigenvalue hypothesis of Theorem \ref{Sthm}, write $M=2m$. By Lemma \ref{eigenvalues}, the largest eigenvalue of $\psi^\ell \otimes \mathbb{Q}$ on $\widetilde{K}^0(J_s(X)) \otimes \mathbb{Q}$ is at most $\lambda_\ell^s$, and $\ell^m = \ell^{j(p-1)+n} > \ell^{j(p-1)+\frac{N-1}{2}}$, which we assumed was greater than $\lambda^s_\ell$. This means that $\ell^m$ cannot be an eigenvalue of $\psi^\ell \otimes \mathbb{Q}$ on $\widetilde{K}^0(J_s(X)) \otimes \mathbb{Q}$. Now, $e(f) \neq 0$ by construction (Theorem \ref{sphereInput}), so $e(\Sigma^{N-3} f) \neq 0$ by stability (Lemma \ref{QZstab}). Since $e(\omega \circ \Sigma^{N-3} f)=0$, the contrapositive of Theorem \ref{Sthm} gives that $\omega^*$ has $p$-divisible image in $\widetilde{K}^0(S^N)$. Since $N$ is even, this implies that $\tau_p \circ \mathrm{deg}(\omega)=0$, as required.

CASE 2 ($N$ odd): Write $n=2n+1$. Let $f'=\Sigma^{N-2} f$ and $g=\Sigma \omega$ in Theorem \ref{Sthm}, and proceed similarly to case 1. The domain of $ \Sigma \omega \circ \Sigma^{N-2} f$ is $S^{M-1}$, where $M-1=N + 2j(p-1)$, so $M$ is even, as is required. To check the eigenvalue hypothesis of Theorem \ref{Sthm}, write $M=2m$. By Lemma \ref{eigenvalues}, the largest eigenvalue of $\psi^\ell \otimes \mathbb{Q}$ on $\widetilde{K}^0(\Sigma J_s(X)) \otimes \mathbb{Q}$ is at most $\lambda_\ell^s$, and $\ell^m = \ell^{j(p-1)+n} = \ell^{j(p-1)+\frac{N-1}{2}}$, which we assumed was greater than $\lambda^s_\ell$. This means that $\ell^m$ cannot be an eigenvalue of $\psi^\ell \otimes \mathbb{Q}$ on $\widetilde{K}^0(\Sigma J_s(X)) \otimes \mathbb{Q}$. As in the previous case, $e(\Sigma^{N-2} f) \neq 0$. Since $e(\Sigma \omega \circ \Sigma^{N-2} f)=0$, the contrapositive of Theorem \ref{Sthm} gives that $(\Sigma \omega)^*$ has $p$-divisible image in $\widetilde{K}^0(S^{N+1})$. Since $N$ is odd, this implies that $\tau_p \circ \mathrm{deg}(\omega)=0$, as required. This completes the case, and hence the proof. \end{proof}

\subsection{Proof of Theorem \ref{K-detection}} \label{sq4}

\begin{construction} \label{dgrmConstruction} 

Let $p$ be an odd prime. Let $\nu : A \to \Omega \Sigma X$, for spaces $A$ and $X$ having the homotopy type of finite $CW$-complexes, with $X$ $(r-1)$-connected for $r \geq 1$. Let $f \in \pi_i(S^3)$ with $\overline{e}(f)$ defined. Suppose that $N,k,s \in \mathbb{Z}^+$ satisfy $N < r(s+1)-1$ and $k \leq s$. The diagrams of the preceding subsections may be combined as follows.

Recall the definition of $\mathrm{deg}'$ from the preamble to Lemma \ref{hurewiczing}. Let $I(A)$ be the submodule of $\mathrm{Hom}(\widetilde{K}_*^{\mathrm{TF}}(S^*),L(\widetilde{K}_*^{\mathrm{TF}}(A))) \otimes \Zp$ generated by $\mathrm{Im}(\tau_p \circ \mathrm{deg}')$. The same grading conventions as usual apply: we write $I^k(A)$ for the weight $k$ part, we write $I_N(A)$ for the degree $N$ part, and let $I^k_N(A)=I^k(A) \cap I_N(A)$.

From Corollary \ref{RestrictDgrm}, using the assumptions that $N < r(s+1)-1$ and $k \leq s$ (which make $\widetilde{\PhiPi}$ and $\widetilde{\PhiK}$ well-defined) we obtain the following diagram, where the images of the vertical maps have been `popped out' to their right.

\begin{center}
\begin{tabular}{c}
\begin{tikzcd}[column sep=0em]
\mathscr{B}_N^k(\pi_*(A)) \ar[rr, "\widetilde{\PhiPi}"] \ar[dr] \ar[dd,"\tau_p \circ \mathrm{deg}'"] & & \pi_N(J_s( X)) \ar[dd,"\tau_p \circ \mathrm{deg}", near start] \ar[dr, end anchor={[yshift=1ex]}] & \\
& I_N^k(A) \ar[rr, crossing over, end anchor={[xshift=-5ex]}] \ar[dl] & & \mathclap{\mathrm{Im}(\tau_p \circ \mathrm{deg})} \ar[dl, start anchor={[yshift=-1ex]}] \\
\mathrm{Hom}(\widetilde{K}_*^{\mathrm{TF}}(S^N),L^k(\widetilde{K}_*^{\mathrm{TF}}(A))) \otimes \Zp \ar[rr,"\widetilde{\PhiK} \otimes \Zp"] & & \mathrm{Hom}(\widetilde{K}_*^{\mathrm{TF}}(S^N),\widetilde{K}_*^{\mathrm{TF}}(J_s (X))) \otimes \Zp. &
\end{tikzcd}
\end{tabular}
\end{center}

Next, from Corollary \ref{UCTp} (with $Y=J_s(X)$) we have a diagram \begin{center}
\begin{tabular}{c}
\begin{tikzcd}[column sep=0em]
\pi_N(J_s(X))  \ar[dr, end anchor={[yshift=0.5ex]}] \ar[dd,"\tau_p \circ \mathrm{deg}"] \ar[rr, equal] & & \pi_N(J_s(X)) \ar[dr, end anchor={[yshift=1ex]}] \ar[dd, "\tau_p \circ \mathrm{deg}", near start] & \\
 & \mathclap{\mathrm{Im}(\tau_p \circ \mathrm{deg})} \phantom{X} \ar[dl, start anchor={[yshift=-0.5ex]}] \ar[rr, "\mathscr{U}'", near start, crossing over, start anchor={[xshift=3ex]}, end anchor={[xshift=-5ex]}] &  & \mathclap{\mathrm{Im}(\tau_p \circ \mathrm{deg})} \ar[dl, start anchor={[yshift=-1ex]}] \\
\mathrm{Hom}(\widetilde{K}_*^{\mathrm{TF}}(S^N),\widetilde{K}_*^{\mathrm{TF}}(J_s (X))) \otimes \Zp &  & \mathrm{Hom}_{\psi \mathrm{-Mod}}(\widetilde{K}^*_{\mathrm{TF}}(J_s(X)),\widetilde{K}^*_{\mathrm{TF}}(S^N))  \otimes \Zp.
\end{tikzcd}
\end{tabular}
\end{center}

Lastly, we obtain the following diagram from Lemma \ref{lastStageWellDef}: \begin{center}
\begin{tabular}{c}
\xymatrix{
\pi_N(J_s(X)) \ar[d]_{\mathrm{deg}} \ar[r]^{f_\Sigma^*} & \pi_{N+i-3}(J_s(X)) \ar[d]^{\de} \\
\mathrm{Hom}_{\psi \mathrm{-Mod}}(\widetilde{K}^*_{\mathrm{TF}}(J_s(X)),\widetilde{K}^*_{\mathrm{TF}}(S^N)) \ar[d]_{\tau_p} \ar[r]^{\theta_\Sigma(f)} & \mathrm{Ext}_{\psi \mathrm{-Mod}}(\widetilde{K}^*_{\mathrm{TF}}(J_s(X)),\widetilde{K}^*_{\mathrm{TF}}(S^{N+i-2})) \\
\mathrm{Hom}_{\psi \mathrm{-Mod}}(\widetilde{K}^*_{\mathrm{TF}}(J_s(X)),\widetilde{K}^*_{\mathrm{TF}}(S^N))  \otimes \Zp. \ar[ur]^{\theta_\Sigma^p(f)} &
}
\end{tabular}
\end{center}

Concatenating these diagrams gives a diagram as follows: \begin{center}
\begin{tabular}{c}
\xymatrix{ \mathscr{B}_N^k(\pi_*(A)) \ar[rrr]^{f^*_\Sigma \circ \widetilde{\PhiPi}} \ar[d]_{\tau_p \circ \mathrm{deg}'} & & & \pi_{N+i-3}(J_{s}(X)) \ar[d]^{\de} \\
I_N^k(A) \ar[rrr]^(.32){\theta_\Sigma^p(f) \circ \mathscr{U}' \circ (\widetilde{\PhiK} \otimes \Zp)} & & & \mathrm{Ext}_{\psi \mathrm{-Mod}}(\widetilde{K}^*_{\mathrm{TF}}(J_{s}(X)),\widetilde{K}^*_{\mathrm{TF}}(S^{N+i-2})).
}
\end{tabular}
\end{center}
\end{construction}

In this subsection, we combine the results of the previous subsections to produce results about this diagram.

\begin{theorem} \label{Ndgrm} Let $p$ be an odd prime. Let $\nu : A \to \Omega \Sigma X$, for spaces $A$ and $X$ having the homotopy type of finite $CW$-complexes, with $X$ $(r-1)$-connected for $r \geq 1$. Let $N,k,s \in \mathbb{Z}^+$ with $N < r(s+1)-1$ and $k \leq s$. Let $f=f_{p,j} \in \pi_{2j(p-1)+2}(S^3)$, the map of Theorem \ref{sphereInput}. 

For $\ell \in \mathbb{Z}^+$, let $\lambda_\ell^Y$ be the largest eigenvalue of $\psi^\ell \otimes \mathbb{Q}$ on $\widetilde{K}^0(Y) \otimes \mathbb{Q}$, and let $\lambda_\ell = \mathrm{max}(\lambda_\ell^X, \lambda_\ell^{\Sigma X})$. If \begin{itemize}
    \item $\overline{\nu}_* \otimes \Zp : \widetilde{K}^\mathrm{TF}_*(\Sigma A) \otimes \Zp \to \widetilde{K}^\mathrm{TF}_*(\Sigma X) \otimes \Zp$ is an injection, and
    \item there exists $\ell \in \mathbb{Z}^+$ such that $\ell^{j(p-1)+\frac{N-1}{2}}>\lambda_\ell^{s}$,
\end{itemize} then $\theta_\Sigma^p(f) \circ \mathscr{U}' \circ (\widetilde{\PhiK} \otimes \Zp): I_N^k(A) \to \mathrm{Ext}_{\psi \mathrm{-Mod}}(\widetilde{K}^*_{\mathrm{TF}}(J_{s}(X)),\widetilde{K}^*_{\mathrm{TF}}(S^{N+2j(p-1)}))$ is an injection. \end{theorem}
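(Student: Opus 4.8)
The plan is to recognise the composite $\theta_\Sigma^p(f) \circ \mathscr{U}' \circ (\widetilde{\PhiK} \otimes \Zp)$, restricted to $I_N^k(A)$, as a chain of three injective maps, each of which has already been supplied by the preliminary results of Subsections \ref{sq1}, \ref{sq2} and \ref{sq3}. The only genuine task is bookkeeping of domains: checking that each stage lands inside the submodule on which the next injectivity statement is asserted.

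First I would invoke Corollary \ref{liftedInjectionsBeget} with $\mathbb{F} = \Zp$. Its hypotheses are met: $k \le s$, $N < r(s+1)-1$, $X$ is $(r-1)$-connected, and by assumption $\overline{\nu}_* \otimes \Zp$ is an injection. Hence $\widetilde{\PhiK} \otimes \Zp$ is injective on all of $\mathrm{Hom}(\widetilde{K}_*^{\mathrm{TF}}(S^N),L^k(\widetilde{K}_*^{\mathrm{TF}}(A))) \otimes \Zp$, and therefore on its submodule $I_N^k(A)$. To see that this restriction actually takes values in $\mathrm{Im}(\tau_p \circ \mathrm{deg})$, reduce the square of Corollary \ref{RestrictDgrm} modulo $p$, obtaining $(\tau_p \circ \mathrm{deg}) \circ \widetilde{\PhiPi} = (\widetilde{\PhiK} \otimes \Zp) \circ (\tau_p \circ \mathrm{deg}')$; thus $\widetilde{\PhiK} \otimes \Zp$ carries $\mathrm{Im}(\tau_p \circ \mathrm{deg}'|_{\mathscr{B}_N^k(\pi_*(A))})$ into the $\Zp$-submodule $\mathrm{Im}(\tau_p \circ \mathrm{deg})$, and since $I_N^k(A)$ is by definition the submodule generated by this image, all of $I_N^k(A)$ is carried into $\mathrm{Im}(\tau_p \circ \mathrm{deg})$.

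Next, Corollary \ref{UCTp} applied with $Y = J_s(X)$ gives that $\mathscr{U}'$ restricts to an injection of $\mathrm{Im}(\tau_p \circ \mathrm{deg})$ into $\mathrm{Hom}_{\psi \mathrm{-Mod}}(\widetilde{K}^*_{\mathrm{TF}}(J_s(X)),\widetilde{K}^*_{\mathrm{TF}}(S^N)) \otimes \Zp$, identifying the $K$-homological copy of $\mathrm{Im}(\tau_p \circ \mathrm{deg})$ with its $K$-theoretic counterpart via $\mathscr{U}$ (using $\mathscr{U} \circ \mathrm{deg} = \mathrm{deg}$ from Lemma \ref{UCTmap}). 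Finally, the eigenvalue hypothesis $\ell^{j(p-1) + \frac{N-1}{2}} > \lambda_\ell^s$ is exactly the condition on $\ell$ in Lemma \ref{lastStageInjection} (taken with $f = f_{p,j}$), so $\theta_\Sigma^p(f)$ is injective on $\mathrm{Im}(\tau_p \circ \mathrm{deg})$. Composing these three injections gives the desired injectivity of $\theta_\Sigma^p(f) \circ \mathscr{U}' \circ (\widetilde{\PhiK} \otimes \Zp)$ on $I_N^k(A)$.

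I do not expect a serious obstacle: the substantive content has been absorbed into Corollaries \ref{liftedInjectionsBeget} and \ref{UCTp} and Lemma \ref{lastStageInjection}. The point that needs care is purely organisational — verifying that $\widetilde{\PhiK} \otimes \Zp$ sends $I_N^k(A)$ into $\mathrm{Im}(\tau_p \circ \mathrm{deg})$ rather than merely into the ambient $\mathrm{Hom}$-group, and keeping straight the $\mathscr{U}$-identification between the $K$-homology and $K$-theory incarnations of $\mathrm{Im}(\tau_p \circ \mathrm{deg})$ as one passes from the second square to the third.
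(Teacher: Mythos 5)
Your proposal is correct and follows essentially the same route as the paper: the proof is exactly the composite of the three injectivity statements from Corollary \ref{liftedInjectionsBeget}, Corollary \ref{UCTp}, and Lemma \ref{lastStageInjection}. The extra bookkeeping you do — checking that $\widetilde{\PhiK} \otimes \Zp$ carries $I_N^k(A)$ into $\mathrm{Im}(\tau_p \circ \mathrm{deg})$ — is left implicit in the paper (it is encoded in the ``popped out'' images of Construction \ref{dgrmConstruction}), so your version is if anything slightly more complete.
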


\begin{proof} By Corollary \ref{liftedInjectionsBeget}, since $\overline{\nu}_* \otimes \Zp$ is an injection, $\widetilde{\PhiK} \otimes \Zp$ is also an injection. By Corollary \ref{UCTp} $\mathscr{U}'$ is an injection. By Lemma \ref{lastStageInjection} the hypothesis on $\ell$ implies that the restriction of $\theta_\Sigma^p(f)$ to $\mathrm{Im}(\tau_p \circ \mathrm{deg})$ is an injection. The map $\theta_\Sigma^p(f) \circ \mathscr{U}' \circ (\widetilde{\PhiK} \otimes \Zp)$  is thus a composite of injections, hence an injection, as required.\end{proof}

In the proof of Theorem \ref{K-detection}, we will wish to restrict attention to those elements of $\mathscr{B}(\pi_*(A))$ who are brackets of classes in $\pi_*(A)$ in some dimensional range $\qmin \leq n \leq \qmax$. All such classes lie in dimensions $k \qmin \leq N \leq k \qmax$. Said more precisely, we have an inclusion $\mathscr{B}^k(\bigoplus_{n=\qmin}^{\qmax} \pi_n(A)) \subset \bigcup_{N = k \qmin}^{k \qmax} \mathscr{B}^k_N(\pi_*(A))$. We will now study the diagram of Construction \ref{dgrmConstruction} in this dimensional range.

\begin{construction} \label{kConstruction} Let $p$ be an odd prime, $\nu: A \to \Omega \Sigma X$ for finite $CW$-complexes $A$ and $X$ with $X$ $(r-1)$-connected for $r \geq 1$, and $f \in \pi_i(S^3)$ with $\overline{e}(f)$ defined. Let $\qmax>\qmin$ be natural numbers. Fix $k \in \mathbb{Z}^+$, and let $s=k \qmax + 1$. For $N \in \mathbb{Z}^+$ with $k \qmin \leq N \leq k \qmax$, we have that $N < r(s + 1) -1$ and $k \leq s$. Combining the diagrams obtained from Construction \ref{dgrmConstruction} for this range of values of $N$ gives the following diagram: \begin{center}
\begin{tabular}{c}
\xymatrix{ \bigcup_{N = k\qmin}^{k \qmax} \mathscr{B}_N^k(\pi_*(A)) \ar[rrr]^{f^*_\Sigma \circ \widetilde{\PhiPi}} \ar[d]_{ \tau_p \circ \mathrm{deg}'} & & & \bigoplus_{N = k \qmin}^{k \qmax} \pi_{N+i-3}(J_s(X)) \ar[d]^{\de} \\
\bigoplus_{N = k\qmin}^{k \qmax} I^k_N(A) \ar[rrr]^(.34){\theta_\Sigma^p(f) \circ \mathscr{U}' \circ (\widetilde{\PhiK} \otimes \Zp)} & & & \bigoplus_{N = k \qmin}^{k \qmax} \mathrm{Ext}_{\psi \mathrm{-Mod}}(\widetilde{K}^*_{\mathrm{TF}}(J_s(X)),\widetilde{K}^*_{\mathrm{TF}}(S^{N+i-2})).
}
\end{tabular}
\end{center} \end{construction}

We now show that by choosing a large enough $c \in \mathbb{Z}^+$, and setting $f=f_{p,ck}$, the eigenvalue hypothesis of Theorem \ref{Ndgrm} may be satisfied across the dimensional range of Construction \ref{kConstruction} for all sufficiently large $k$.

\begin{corollary} \label{kDgrm} Let $p$ be an odd prime. Let $\nu : A \to \Omega \Sigma X$, for spaces $A$ and $X$ having the homotopy type of finite $CW$-complexes, with $X$ path-connected. Let $\qmax>\qmin$ be natural numbers. Let $c,k \in \mathbb{Z}^+$. Let $f=f_{p,ck} \in \pi_{2ck(p-1)+2}(S^3)$ be the map of Theorem \ref{sphereInput}. If $$\overline{\nu}_* \otimes \Zp : \widetilde{K}^\mathrm{TF}_*(\Sigma A) \otimes \Zp \to \widetilde{K}^\mathrm{TF}_*(\Sigma X) \otimes \Zp$$ is an injection then there exists $c \in \mathbb{Z}^+$ such that for large enough $k \in \mathbb{Z}^+$, $$\theta_\Sigma^p(f) \circ \mathscr{U}' \circ (\widetilde{\PhiK} \otimes \Zp): \bigoplus_{N = k \qmin}^{k \qmax} I_N^k(A) \to \bigoplus_{N = k \qmin}^{k \qmax} \mathrm{Ext}_{\psi \mathrm{-Mod}}(\widetilde{K}^*_{\mathrm{TF}}(J_s(X)),\widetilde{K}^*_{\mathrm{TF}}(S^{N+2ck(p-1)}))$$ is an injection.
\end{corollary}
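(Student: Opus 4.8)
The plan is to deduce this immediately from Theorem \ref{Ndgrm}, applied simultaneously to every $N$ in the range $k\qmin \le N \le k\qmax$. Since $X$ is path-connected it is $(r-1)$-connected with $r = 1$, and, exactly as in Construction \ref{kConstruction}, I would take $s := k\qmax+1$; then for every such $N$ the inequalities $N < r(s+1)-1 = s$ and $k \le s$ hold, so all the data of Construction \ref{dgrmConstruction} — the maps $\widetilde{\PhiPi}$, $\widetilde{\PhiK}$, $\mathscr{U}'$ and $\theta_\Sigma^p(f)$ — are available, and Theorem \ref{Ndgrm} may be invoked with $j = ck$. Its first hypothesis, injectivity of $\overline{\nu}_* \otimes \Zp$, is the standing assumption of this corollary. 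Hence the only thing to arrange is the eigenvalue hypothesis of Theorem \ref{Ndgrm}, and to arrange it uniformly in $N$ across the range and for all large $k$.

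For that, I would first record that $\lambda_\ell$ grows in $\ell$ with bounded exponent. Because $X$ has the homotopy type of a \emph{finite} complex, $\widetilde{K}^0(X)\otimes\mathbb{Q}$ and $\widetilde{K}^0(\Sigma X)\otimes\mathbb{Q}$ are finite-dimensional, and (as used already in the proof of Lemma \ref{eigenvalues}) via the Chern character every eigenvalue of $\psi^\ell\otimes\mathbb{Q}$ on either space is a power $\ell^i$. Thus there is a fixed non-negative integer $A$, depending only on $X$, with $\lambda_\ell^X \le \ell^A$ and $\lambda_\ell^{\Sigma X} \le \ell^A$, so $\lambda_\ell = \mathrm{max}(\lambda_\ell^X,\lambda_\ell^{\Sigma X}) \le \ell^A$ for all $\ell \ge 1$. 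Testing with $\ell = 2$ and $s = k\qmax+1$, the eigenvalue hypothesis of Theorem \ref{Ndgrm} for a given $N$ is implied by
$$2^{\,ck(p-1) + \frac{N-1}{2}} > 2^{\,A(k\qmax+1)},$$
that is, by $ck(p-1) + \tfrac{N-1}{2} > A(k\qmax+1)$; since the left side increases with $N$, it suffices to check this at the bottom of the range, $N = k\qmin$.

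It then remains only to choose $c$. I would take any $c \in \mathbb{Z}^+$ with $c(p-1) > A\qmax$ — for instance $c = A\qmax + 1$ — so that the coefficient $c(p-1) + \tfrac{\qmin}{2}$ of $k$ in $ck(p-1) + \tfrac{k\qmin-1}{2}$ strictly exceeds the coefficient $A\qmax$ of $k$ in $A(k\qmax+1)$; hence the inequality holds for all sufficiently large $k$. For such $c$ and $k$, Theorem \ref{Ndgrm} gives that $\theta_\Sigma^p(f)\circ\mathscr{U}'\circ(\widetilde{\PhiK}\otimes\Zp)$ is injective on each summand $I_N^k(A)$, with image in $\mathrm{Ext}_{\psi \mathrm{-Mod}}(\widetilde{K}^*_{\mathrm{TF}}(J_s(X)),\widetilde{K}^*_{\mathrm{TF}}(S^{N+2ck(p-1)}))$. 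Finally, since $\mathrm{deg}'$, $\widetilde{\PhiK}$, $\mathscr{U}'$ and $\theta_\Sigma^p(f)$ all preserve the internal degree (with the single uniform shift $N \mapsto N + 2ck(p-1)$ on the target), the map on $\bigoplus_{N = k\qmin}^{k\qmax} I_N^k(A)$ is the direct sum of these injections, hence injective, which is exactly the assertion. There is no genuine obstacle here beyond this last arithmetic estimate and the standing injectivity hypothesis; the one point deserving a little care is that $s$ itself grows with $k$, so \emph{both} sides of the displayed inequality are linear in $k$, and the whole argument rests on choosing $c$ — and thereby the exponent coming from Gray's class $f_{p,ck}$ — large enough to outpace the $\lambda_\ell^s$ growth.
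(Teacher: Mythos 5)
Your proposal is correct and follows essentially the same route as the paper: reduce to the eigenvalue hypothesis of Theorem \ref{Ndgrm}, note that it suffices to check it at $N = k\qmin$ with a fixed $\ell \geq 2$, and choose $c$ so that the linear-in-$k$ growth of the exponent $ck(p-1)+\tfrac{k\qmin-1}{2}$ outpaces that of $\lambda_\ell^{k\qmax+1}$. Your intermediate bound $\lambda_\ell \leq \ell^A$ is a harmless (and valid) concretisation of the paper's step of simply taking logarithms and comparing the coefficients of $k$.
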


\begin{proof} By Theorem \ref{Ndgrm}, it suffices to show that for each $N$ with $k \qmin \leq N \leq k \qmax$ there exists $\ell \in \mathbb{Z}^+$ such that $\ell^{ck(p-1)+\frac{N-1}{2}}>\lambda_\ell^s=\lambda_\ell^{k \qmax + 1}$. Take any $\ell \geq  2$. Since $N \geq k \qmin$, it suffices to find $c$ such that for large enough $k$ we have $\ell^{ck(p-1)+\frac{k \qmin -1}{2}}>\lambda_\ell^{k \qmax}$. Taking $\log$s on both sides, this is equivalent to $$(ck(p-1)+\frac{k \qmin -1}{2}) \log (\ell)>k \qmax \log(\lambda_\ell).$$ It is now clear that we may choose $c$ large enough that this equation holds for large enough $k$, in particular, any $c \geq \frac{1}{p-1}(\qmax \frac{\log(\lambda_\ell)}{\log(\ell)} - \frac{\qmin}{2})$ will do. \end{proof}

Before proving Theorem \ref{K-detection}, we prove three lemmas. The first converts the $K$-theoretic hypothesis of Theorem \ref{K-detection} into the $K$-homological input our construction requires.

\begin{lemma} \label{useOfUCT} Let $X$ be a space, and let $p$ be prime. Let $\mu : S^{q_1+1} \vee S^{q_2+1} \to \Sigma X$ be a map with $q_i \geq 1$, such that the map $$\widetilde{K}^*_{\mathrm{TF}}(\Sigma X) \otimes \Zp \xrightarrow{\mu^* \otimes \Zp} \widetilde{K}^*_{\mathrm{TF}}(S^{q_1 +1} \vee S^{q_2 +1}) \otimes \Zp \cong \Zp \oplus \Zp$$
is a surjection. Then $\mu_* \otimes \Zp : \widetilde{K}_*^{\mathrm{TF}}(S^{q_1 + 1} \vee S^{q_2 + 1}) \otimes \Zp \to \widetilde{K}_*^{\mathrm{TF}}( \Sigma X) \otimes \Zp$ is an injection. \end{lemma}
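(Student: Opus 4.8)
The plan is to translate the $K$-theoretic hypothesis into a statement about the induced map on $\mathrm{Hom}(\,\cdot\,,\mathbb{Z})$ via the universal coefficient theorem (Corollary \ref{usableUCTs}), and then to deduce the conclusion from a purely algebraic observation about homomorphisms out of a finitely generated free abelian group.

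First I would record that $S^{q_1+1} \vee S^{q_2+1}$ is a finite $CW$-complex, so $\widetilde{K}_*^{\mathrm{TF}}(S^{q_1+1}\vee S^{q_2+1})$ is finitely generated and free (of total rank $2$, by Remark \ref{sphGenChoice}), and in particular Corollary \ref{usableUCTs}(2) applies to it. By naturality of the universal coefficient sequence of Corollary \ref{usableUCTs}(1), applied to the map $\mu$, one obtains a commuting square whose top horizontal arrow is the surjection $\widetilde{K}^*_{\mathrm{TF}}(\Sigma X) \twoheadrightarrow \mathrm{Hom}(\widetilde{K}_*^{\mathrm{TF}}(\Sigma X), \mathbb{Z})$, whose bottom horizontal arrow is the isomorphism $\widetilde{K}^*_{\mathrm{TF}}(S^{q_1+1}\vee S^{q_2+1}) \xrightarrow{\cong} \mathrm{Hom}(\widetilde{K}_*^{\mathrm{TF}}(S^{q_1+1}\vee S^{q_2+1}), \mathbb{Z})$, whose left vertical arrow is $\mu^*$, and whose right vertical arrow is $\mathrm{Hom}(\mu_*, \mathbb{Z})$. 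Since $-\otimes \Zp$ is right exact it preserves both the surjection and the isomorphism, and a diagram chase in the resulting square shows that the hypothesis that $\mu^*\otimes\Zp$ is surjective forces $\mathrm{Hom}(\mu_*, \mathbb{Z}) \otimes \Zp$ to be surjective.

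The remaining step is the algebraic core: if $B$ is a finitely generated free abelian group, $A$ is any abelian group, and $g \colon B \to A$ satisfies that $\mathrm{Hom}(g, \mathbb{Z}) \otimes \Zp$ is surjective, then $g \otimes \Zp$ is injective. I would prove this by an element chase. Suppose $v \in B$ with $g(v) \in pA$, say $g(v) = pa$. For any $\phi \in \mathrm{Hom}(B,\mathbb{Z})$, surjectivity of $\mathrm{Hom}(g,\mathbb{Z})\otimes\Zp$ yields $\Psi \in \mathrm{Hom}(A,\mathbb{Z})$ with $\Psi \circ g \equiv \phi \pmod{p}$ as maps $B \to \mathbb{Z}$; evaluating at $v$ gives $\phi(v) \equiv \Psi(pa) \equiv 0 \pmod p$. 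Since this holds for every $\phi$ and $B$ is free, $v \in pB$, so $g\otimes\Zp$ has trivial kernel. Applying this with $g = \mu_*$, $B = \widetilde{K}_*^{\mathrm{TF}}(S^{q_1+1}\vee S^{q_2+1})$ and $A = \widetilde{K}_*^{\mathrm{TF}}(\Sigma X)$ — running the argument separately in each of the two $\mathbb{Z}/2$-degrees — proves the lemma.

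I expect the only genuine subtlety to be the bookkeeping: invoking Corollary \ref{usableUCTs}(2) for the finite wedge of spheres rather than for $\Sigma X$ (which is why the hypothesis is a surjectivity and not an isomorphism statement), correctly identifying the two vertical maps in the universal coefficient square with $\mu^*$ and $\mathrm{Hom}(\mu_*,\mathbb{Z})$, and carrying the $\mathbb{Z}/2$-grading through. The algebraic step needs nothing about $\Sigma X$ beyond its being an abelian group, so no finiteness hypothesis on $X$ enters there.
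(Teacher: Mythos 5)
Your proposal is correct and follows essentially the same route as the paper: both rest on the naturality square for the universal coefficient theorem (Corollary \ref{usableUCTs}), together with the observation that a functional evaluated at an element with $p$-divisible image under $\mu_*$ must take $p$-divisible values on it. The paper simply runs this as a contrapositive in the specific rank-$2$ case, whereas you phrase the algebraic core directly and for a general finitely generated free source, but the content is identical.
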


\begin{proof} Naturality of the Universal Coefficient Theorem (Corollary \ref{usableUCTs}) relative to the map $\mu$ gives a diagram of short exact sequences \begin{center}
\begin{tabular}{c}
\xymatrix{ 0 \ar[r] & 0 \ar[r] & \widetilde{K}^*_{\mathrm{TF}}(S^{q_1 + 1} \vee S^{q_2 + 1}) \ar[r] & \mathrm{Hom}(\widetilde{K}_*^{\mathrm{TF}}(S^{q_1 + 1} \vee S^{q_2 + 1}), \mathbb{Z}) \ar[r] & 0 \\
0 \ar[r] & E \ar[u] \ar[r] & \widetilde{K}^*_{\mathrm{TF}}(\Sigma X) \ar^{\mu^*}[u] \ar[r] & \mathrm{Hom}(\widetilde{K}_*^{\mathrm{TF}}(\Sigma X), \mathbb{Z}) \ar^{\mathrm{Hom}(\mu_*, \mathbb{Z})}[u] \ar[r] & 0,
}  
\end{tabular}
\end{center} where $E$ denotes the quotient of $\mathrm{Ext}(\widetilde{K}_{*-1}(\Sigma X), \mathbb{Z})$ by its torsion subgroup. We will argue by contrapositive. Suppose that $\mu_* \otimes \Zp$ is not injective. The $\Zp$-vector space $\widetilde{K}_*^{\mathrm{TF}}(S^{q_1 + 1} \vee S^{q_2 + 1}) \otimes \Zp$ has dimension 2, so there exists a basis $x,y$ where $(\mu_* \otimes \Zp) (y)=0$. Prior to tensoring with $\Zp$, this means that there exists a non-$p$-divisible element $\widetilde{y}$ of $\widetilde{K}_*^{\mathrm{TF}}(S^{q_1 + 1} \vee S^{q_2 + 1})$ such that $p$ divides $\mu_*(\widetilde{y})$. This implies that any element $\varphi$ of $\mathrm{Hom}(\widetilde{K}_*^{\mathrm{TF}}(S^{q_1 + 1} \vee S^{q_2 + 1}), \mathbb{Z})$ with $\varphi(\widetilde{y})$ not $p$-divisible is not contained in the image of $\mathrm{Hom}(\mu_*, \mathbb{Z})$, hence, by the diagram, that $\mu^* \otimes \Zp$ is not surjective, as required. \end{proof}

Some preamble to the second lemma is necessary. Let $h : \pi_*(A) \to \widetilde{K}_*^{\mathrm{TF}}(A)$ be the $K$-homological Hurewicz map, which sends $f \in \pi_N(A)$ to $f_*(\xi_N) \in \widetilde{K}_*^{\mathrm{TF}}(A)$. As with $\mathrm{deg}'$, let $h' : \mathscr{B}(\pi_*(A)) \to L(\widetilde{K}_*^{\mathrm{TF}}(A))$ be the unique map which restricts to $h : \pi_*(A) \to \widetilde{K}_*^{\mathrm{TF}}(A) \subset L(\widetilde{K}_*^{\mathrm{TF}}(A))$ and respects brackets.

Let $M$ be a $\mathbb{Z}/2$-graded $\mathbb{Z}$-module. Let $\chi : \mathrm{Hom}(\widetilde{K}_*^{\mathrm{TF}}(S^*),M) \to M$ be the map which carries $\varphi \in  \mathrm{Hom}(\widetilde{K}_*^{\mathrm{TF}}(S^N),M)$ to $\varphi(\xi_N) \in M$ (Remark \ref{sphGenChoice}). If $M=L$ is a $\mathbb{Z}/2$-graded Lie algebra, then it follows immediately from the definition of the bracket in $\mathrm{Hom}(\widetilde{K}_*^{\mathrm{TF}}(S^*),L)$ that $\chi$ is a map of Lie algebras.

\begin{lemma} \label{degh} For any space $A$, there is a commuting diagram
\begin{center}
\begin{tabular}{c}
\xymatrix{
\mathscr{B}(\pi_*(A)) \ar[r]^(.33){\mathrm{deg}'} \ar[d]_{h'} & \mathrm{Hom}(\widetilde{K}_*^{\mathrm{TF}}(S^*),L(\widetilde{K}_*^{\mathrm{TF}}(A))) \ar[dl]^{\chi} \\
L(\widetilde{K}_*^{\mathrm{TF}}(A)).
}  
\end{tabular}
\end{center}
\end{lemma}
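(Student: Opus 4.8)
The plan is to use the universal property of $\mathscr{B}(\pi_*(A))$: by construction it is freely generated by its weight-$1$ elements $U(\pi_*(A)) = \pi_*(A)$ under the formal bracket operation, so any map out of it into a bracketed set is uniquely determined by its restriction to $\pi_*(A)$ together with the demand that it carry brackets to brackets. Since $h'$ and $\chi \circ \mathrm{deg}'$ are two maps $\mathscr{B}(\pi_*(A)) \to L(\widetilde{K}_*^{\mathrm{TF}}(A))$, it therefore suffices to check two things: that both respect brackets, and that both agree on weight-$1$ elements.

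For the bracket compatibility: $\mathrm{deg}'$ carries brackets to brackets by its definition, and $\chi$ is a homomorphism of graded Lie algebras — this is recorded in the preamble to the lemma, being immediate from the definition of the bracket on $\mathrm{Hom}(\widetilde{K}_*^{\mathrm{TF}}(S^*),L)$ and the normalisation $\xi_N \otimes \xi_M = \xi_{N+M}$ of Remark \ref{sphGenChoice}. Hence $\chi \circ \mathrm{deg}'$ respects brackets. The map $h'$ respects brackets by definition.

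For agreement on weight-$1$ elements: let $f \in \pi_N(A)$. By definition of $h$ and $h'$ we have $h'(f) = h(f) = f_*(\xi_N)$. On the other side, $\mathrm{deg}'(f) = \mathrm{deg}(f) = f_*$, regarded as an element of $\mathrm{Hom}(\widetilde{K}_*^{\mathrm{TF}}(S^N),\widetilde{K}_*^{\mathrm{TF}}(A)) = \mathrm{Hom}(\widetilde{K}_*^{\mathrm{TF}}(S^N),L^1(\widetilde{K}_*^{\mathrm{TF}}(A)))$ via the identification $L^1(\widetilde{K}_*^{\mathrm{TF}}(A)) = \widetilde{K}_*^{\mathrm{TF}}(A)$; applying $\chi$ evaluates this homomorphism at $\xi_N$, giving $\chi(\mathrm{deg}'(f)) = f_*(\xi_N)$. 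So the two composites agree on $\pi_*(A)$.

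Combining, $h'$ and $\chi \circ \mathrm{deg}'$ are two bracket-respecting maps out of the free object $\mathscr{B}(\pi_*(A))$ that coincide on generators, hence are equal, which is the asserted commutativity. There is no substantive obstacle here; the only points needing a moment's care are bookkeeping the identification $L^1(\widetilde{K}_*^{\mathrm{TF}}(A)) = \widetilde{K}_*^{\mathrm{TF}}(A)$ and invoking that $\chi$ is a map of Lie algebras, both of which are already recorded in the text.
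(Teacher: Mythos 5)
Your proof is correct and follows the same route as the paper: verify agreement on the weight-$1$ generators $\pi_*(A)$ (where both composites send $f$ to $f_*(\xi_N)$) and then extend over all of $\mathscr{B}(\pi_*(A))$ using its freeness and the fact that all three maps respect brackets. The paper's proof is just a more compressed version of exactly this argument.
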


\begin{proof} Commutativity of the diagram \begin{center}
\begin{tabular}{c}
\xymatrix{
\pi_*(A) \ar[r]^(.33){\mathrm{deg}} \ar[d]_{h} & \mathrm{Hom}(\widetilde{K}_*^{\mathrm{TF}}(S^*),\widetilde{K}_*^{\mathrm{TF}}(A)) \ar[dl]^{\chi} \\
\widetilde{K}_*^{\mathrm{TF}}(A).
}  
\end{tabular}
\end{center} follows from the definitions. Commutativity of the diagram from the lemma statement then follows from the definition of $\mathscr{B}(\pi_*(A))$, since $\chi$ respects brackets. \end{proof}

The third and fourth lemmas allow us to make the statement of Theorem \ref{K-detection} an entirely $p$-local one.

\begin{lemma} \label{liftLemma} Let $p$ be a prime, let $X$ be a simply connected $CW$-complex, and let $f: S^{q_1 + 1} \vee S^{q_2 + 1} \longrightarrow X_{(p)}$ be any map. There exists a map $\widetilde{f}: S^{q_1 + 1} \vee S^{q_2 + 1} \longrightarrow X$, and a map $\varphi : S^{q_1 + 1} \vee S^{q_2 + 1} \longrightarrow S^{q_1 + 1} \vee S^{q_2 + 1}$ which is a homotopy equivalence after $p$-localization, making the diagram \begin{center}
\begin{tabular}{c}
\xymatrix{ S^{q_1 + 1} \vee S^{q_2 + 1} \ar^(.65){\widetilde{f}}[r] \ar_{\varphi}[d] & X \ar[d] \\
\ar^(.65){f}[r] S^{q_1 + 1} \vee S^{q_2 + 1}  & X_{(p)}
}
\end{tabular}
\end{center} commute, where the vertical arrow is the localization.\end{lemma}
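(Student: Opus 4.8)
The plan is to build $\widetilde{f}$ and $\varphi$ directly out of the restrictions of $f$ to the two wedge summands, using the standard description of the effect of $p$-localization on the homotopy groups of a simply connected space.

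Since a map out of a wedge is determined by its restrictions to the summands, the map $f$ is the same data as a pair of classes $f_i := f|_{S^{q_i+1}} \in \pi_{q_i+1}(X_{(p)})$ for $i=1,2$. As $q_i + 1 \geq 2$ and $X$ is simply connected, the localization map induces an isomorphism $\pi_{q_i+1}(X) \otimes \mathbb{Z}_{(p)} \xrightarrow{\ \cong\ } \pi_{q_i+1}(X_{(p)})$. Clearing denominators, there is therefore an integer $b_i$ coprime to $p$ and a class $g_i \in \pi_{q_i+1}(X)$ whose image under the localization map $\pi_{q_i+1}(X) \to \pi_{q_i+1}(X_{(p)})$ equals $b_i f_i$. (Here $\pi_{q_i+1}(X)$ need not be finitely generated, but this causes no difficulty.)

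I would then let $\widetilde{f}: S^{q_1+1} \vee S^{q_2+1} \to X$ be the map whose restriction to the $i$-th summand represents $g_i$, and let $\varphi: S^{q_1+1} \vee S^{q_2+1} \to S^{q_1+1} \vee S^{q_2+1}$ be the map whose restriction to the $i$-th summand is a degree-$b_i$ self-map of $S^{q_i+1}$ followed by the inclusion of that summand. Each degree-$b_i$ self-map is a $\mathbb{Z}_{(p)}$-homology isomorphism between simply connected spaces, since $b_i$ is invertible in $\mathbb{Z}_{(p)}$, hence a $p$-local equivalence; a wedge of $p$-local equivalences of simply connected spaces is again a $p$-local equivalence, so $\varphi$ has the required property. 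To verify the square commutes it suffices, again by the universal property of the wedge, to check on each summand $S^{q_i+1}$: the down-then-across composite $S^{q_i+1} \to X \to X_{(p)}$ represents the localization of $g_i$, namely $b_i f_i$, while the across-then-down composite $S^{q_i+1} \to S^{q_1+1} \vee S^{q_2+1} \to X_{(p)}$ is $f_i$ precomposed with a degree-$b_i$ map, which also represents $b_i f_i$ because $S^{q_i+1}$ is a co-$H$-space.

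The argument is essentially formal and presents no real obstacle; the only points needing a little care are invoking the correct statement about the action of localization on homotopy groups of a simply connected space (valid with no finite generation hypothesis), and checking that $\varphi$ is a genuine $p$-local homotopy equivalence rather than merely a $\mathbb{Z}_{(p)}$-homology equivalence — which holds since every space in sight is simply connected.
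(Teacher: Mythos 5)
Your proposal is correct and follows essentially the same route as the paper's proof: split $f$ over the wedge summands, use the identification $\pi_*(X_{(p)}) \cong \pi_*(X) \otimes \mathbb{Z}_{(p)}$ to clear denominators with integers $u_i$ prime to $p$, lift $u_i f_i$ to $X$, and take $\varphi$ to be the wedge of degree-$u_i$ self-maps. Your additional checks (commutativity on each summand via the co-$H$ structure, and that a wedge of degree maps with degrees prime to $p$ is a $p$-local equivalence) are details the paper leaves implicit.
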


\begin{proof} Write $f = f_1 \vee f_2$, for maps $f_i:S^{q_i+1} \longrightarrow X$. On homotopy groups, the localizing map may be identified with the tensor map $$\pi_*(X) \longrightarrow \pi_*(X) \otimes \mathbb{Z}_{(p)} \cong \pi_*(X_{(p)}).$$ This implies that there exist integers $u_1$ and $u_2$, not divisible by $p$, such that $u_i f_i$ lifts to a map $\widetilde{f}_i : S^{q_i+1} \longrightarrow X$. Setting $\widetilde{f} = \widetilde{f}_1 \vee \widetilde{f}_2$, and letting $\varphi$ be the degree $u_i$ map on each wedge summand, we obtain the desired diagram. \end{proof}

\begin{lemma} \label{refereeLemma} Let $p$ be prime, and let $X$ and $Y$ be connected $CW$-complexes. If $\mu: S^{q_1+1} \vee S^{q_2+1} \longrightarrow \Sigma X$ induces an injection on $\widetilde{K}_*(\phantom{A}) \otimes \Zp$, and there is a homotopy equivalence of $p$-localizations $\Sigma Y_{(p)} \simeq \Sigma X_{(p)}$, then there exists a map $\mu' : S^{q_1+1} \vee S^{q_2+1} \longrightarrow \Sigma Y$ which also induces an injection on $\widetilde{K}_*(\phantom{A}) \otimes \Zp$. \end{lemma}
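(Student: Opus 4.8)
The plan is to transport $\mu$ across the given $p$-local equivalence and then de-localise the resulting map using Lemma \ref{liftLemma}. Write $W = S^{q_1+1} \vee S^{q_2+1}$, let $c_X \colon \Sigma X \to \Sigma X_{(p)}$ and $c_Y \colon \Sigma Y \to \Sigma Y_{(p)}$ denote the localisation maps (these are defined since $\Sigma X$ and $\Sigma Y$ are simply connected), and let $e \colon \Sigma X_{(p)} \xrightarrow{\simeq} \Sigma Y_{(p)}$ be the given homotopy equivalence. First I would form the composite $g = e \circ c_X \circ \mu \colon W \to \Sigma Y_{(p)}$ and check that it induces an injection on $\widetilde{K}_*(\phantom{A}) \otimes \Zp$. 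Indeed $\mu_* \otimes \Zp$ is an injection by hypothesis, $e_* \otimes \Zp$ is an isomorphism because $e$ is a homotopy equivalence, and $(c_X)_* \otimes \Zp$ is an isomorphism because mod-$p$ $K$-homology is a $p$-local invariant: for a space $Z$ of finite type the localisation map induces an isomorphism $\widetilde{K}_*(Z) \otimes \mathbb{Z}_{(p)} \xrightarrow{\cong} \widetilde{K}_*(Z_{(p)})$, and tensoring further with $\Zp$ preserves this. Hence $g_* \otimes \Zp$ is a composite of an injection and two isomorphisms, so it is an injection.

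Next I would apply Lemma \ref{liftLemma} to $g \colon W \to \Sigma Y_{(p)}$; its hypotheses hold because $\Sigma Y$ is simply connected. This produces a lift $\widetilde{g} \colon W \to \Sigma Y$ and a self-map $\varphi \colon W \to W$ which is a homotopy equivalence after $p$-localisation, satisfying $c_Y \circ \widetilde{g} \simeq g \circ \varphi$. Applying $\widetilde{K}_*(\phantom{A}) \otimes \Zp$ to this homotopy yields
$$\big((c_Y)_* \otimes \Zp\big) \circ \big(\widetilde{g}_* \otimes \Zp\big) \;=\; \big(g_* \otimes \Zp\big) \circ \big(\varphi_* \otimes \Zp\big).$$
Since $\varphi$ is a $p$-local equivalence, $\varphi_* \otimes \Zp$ is an isomorphism, so the right-hand side is an injection, and therefore so is $\widetilde{g}_* \otimes \Zp$. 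Taking $\mu' = \widetilde{g}$ then completes the proof.

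Once Lemma \ref{liftLemma} is available the argument is essentially bookkeeping: compose with the equivalence, lift, and track the effect on $\widetilde{K}_*(\phantom{A}) \otimes \Zp$. The one point that genuinely requires care is the assertion that the localisation maps induce isomorphisms on $\widetilde{K}_*(\phantom{A}) \otimes \Zp$. For the finite complex $W$ this is immediate; for $\Sigma X$ and $\Sigma Y$ it is the standard compatibility of a homology theory with localisation of a nilpotent space of finite type — a hypothesis that holds in all of our applications of this lemma, and one that could in any case be arranged at the outset by replacing $X$ and $Y$ with finite $p$-local models, exactly as in the statement of Theorem \ref{K-detection}.
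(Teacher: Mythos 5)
Your proposal is correct and follows essentially the same route as the paper: compose $\mu$ with the localisation and the equivalence $\Sigma X_{(p)} \simeq \Sigma Y_{(p)}$, lift via Lemma \ref{liftLemma}, and use that localising maps induce isomorphisms on $\widetilde{K}_*(\phantom{A}) \otimes \Zp$ (the paper cites Mislin for this, for arbitrary spaces, so your hedging in the last paragraph is not needed). No gaps.
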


\begin{proof} For any space $A$, the localizing map $A \longrightarrow A_{(p)}$ induces an isomorphism on $K_*(\phantom{A}) \otimes \Zp$ \cite{Mislin2}. It therefore suffices to show that there exists maps $\mu'$ and $\varphi$ making the following diagram commute, with the localization $\varphi_{(p)}$ being a homotopy equivalence.  \begin{center}
\begin{tabular}{c}
\xymatrix{
S^{q_1+1} \vee S^{q_2+1} \ar@{.>}_{\varphi}[d]  \ar@{.>}^(.65){\mu'}[r] & \Sigma Y \ar[r] & \Sigma Y_{(p)} \\
S^{q_1+1} \vee S^{q_2+1} \ar^(.65){\mu}[r] & \Sigma X \ar[r] & \Sigma X_{(p)}. \ar_{\simeq}[u]
}
\end{tabular}
\end{center} Such maps exist by Lemma \ref{liftLemma}. \end{proof}

We are now ready to prove Theorem \ref{K-detection}.

\begin{proof}[Proof of Theorem \ref{K-detection}] Let $\mu = \mu_1 \vee \mu_2$, with adjoint $\overline{\mu} : S^{q_1} \vee S^{q_2} \to \Omega \Sigma X$. Let $f=f_{p,ck} \in \pi_{2ck(p-1)+2}(S^3)$. Consider the diagram of Construction \ref{kConstruction}, with $A=S^{q_1} \vee S^{q_2}$, $\qmax = \mathrm{max}(q_1,q_2)$, $\qmin = \mathrm{min}(q_1,q_2)$, and $\nu = \overline{\mu}$. We have such a diagram for each $k \in \mathbb{Z}^+$: \begin{center}
\begin{tabular}{c}
\xymatrix@C=0em{ \bigcup_{N = k\qmin}^{k \qmax} \mathscr{B}_N^k(\pi_*(S^{q_1} \vee S^{q_2})) \ar[r]^{f^*_\Sigma \circ \widetilde{\Phi_{\overline{\mu}}^\pi}} \ar[d]_{ \tau_p \circ \mathrm{deg}'} & \bigoplus_{N = k \qmin}^{k \qmax} \pi_{N+2ck(p-1)-1}(J_s(X)) \ar[d]^{\de} \\
\bigoplus_{N = k\qmin}^{k \qmax} I^k_N(S^{q_1} \vee S^{q_2}) \ar[r] & \bigoplus_{N = k \qmin}^{k \qmax} \mathrm{Ext}_{\psi \mathrm{-Mod}}(\widetilde{K}^*_{\mathrm{TF}}(J_s(X)),\widetilde{K}^*_{\mathrm{TF}}(S^{N+2ck(p-1)})).
}
\end{tabular}
\end{center} By assumption, $\mu^* \otimes \Zp: \widetilde{K}^*(\Sigma X) \otimes \Zp \to \widetilde{K}^*(S^{q_1 +1} \vee S^{q_2 +1}) \otimes \Zp$
is a surjection. Since $\widetilde{K}^*(S^{q_1 +1} \vee S^{q_2 +1})$ is torsion-free, Lemma \ref{useOfUCT} then implies that $$\mu_* \otimes \Zp : \widetilde{K}_*^{\mathrm{TF}}(S^{q_1 +1} \vee S^{q_2 +1}) \otimes \Zp \to \widetilde{K}_*^{\mathrm{TF}}(\Sigma X) \otimes \Zp$$ is an injection. By Lemma \ref{refereeLemma}, we may assume without loss of generality that $X$ has the integral homotopy type of a finite $CW$-complex. Thus, by Corollary \ref{kDgrm}, we may fix $c$ such that for large enough $k$, $\theta_p(f) \circ \mathscr{U}' \circ (\widetilde{\Phi_{\overline{\mu}}^K} \otimes \Zp)$ is an injection.


The Hurewicz map $h$ is a surjection $\pi_*(S^{q_1} \vee S^{q_2}) \to \widetilde{K}_*^{\mathrm{TF}}(S^{q_1} \vee S^{q_2})$, so the submodule generated by the image of the map $h' : \mathscr{B} (\pi_*(S^{q_1} \vee S^{q_2})) \to L(\widetilde{K}_*^{\mathrm{TF}}(S^{q_1} \vee S^{q_2}))$ of Lemma \ref{degh} contains the submodule generated by $\widetilde{K}_*^{\mathrm{TF}}(S^{q_1} \vee S^{q_2})$ under the bracket operation. In particular, it contains the weight $k$ component $L^k(\widetilde{K}_*^{\mathrm{TF}}(S^{q_1} \vee S^{q_2}))$ for each $k$. By Theorem \ref{ungradedWitt}, $\dim_{\mathbb{Z}} (L^k(\widetilde{K}_*^{\mathrm{TF}}(S^{q_1} \vee S^{q_2}))) = W_2(k)$. Note that $L^k(\widetilde{K}_*^{\mathrm{TF}}(S^{q_1} \vee S^{q_2})) = \bigoplus_{ N = k\qmin}^{k \qmax} L^k(\widetilde{K}_*^{\mathrm{TF}}(S^{q_1} \vee S^{q_2}))$.

\begin{sloppypar} It then follows from Lemma \ref{degh} that $\mathrm{dim}_{\mathbb{Z}/p}(\bigoplus_{N = k\qmin}^{k \qmax} I^k_N(S^{q_1} \vee S^{q_2})) \geq W_2(k)$. Since $\theta_\Sigma^p(f) \circ \mathscr{U}' \circ (\widetilde{\Phi_{\overline{\mu}}^K} \otimes \Zp)$ is an injection for large enough $k$, it follows that the dimension of $\de(\bigoplus_{ N = k\qmin}^{k \qmax} \pi_{N+2ck(p-1)-1}(J_s(X)))$ is at least $W_2(k)$. By Corollary \ref{RestrictDgrm} $(i_s)_*$ is an injection, so the dimension of $(i_s)_*(\bigoplus_{N = k \qmin}^{k \qmax} \pi_{N+2ck(p-1)-1}(J_s(X))) \subset \bigoplus_{N = k\qmin}^{k \qmax} \pi_{N+2ck(p-1)-1}(\Omega \Sigma X)$ is also at least $W_2(k)$.  \end{sloppypar} Thus, $\Sigma X$ satisfies the hypotheses of Lemma \ref{linearimplieshype} with $a=2c(p-1)+ \qmax = 2c(p-1)+ \mathrm{max}(q_1,q_2)$ and $b=0$, and hence is $p$-hyperbolic. \end{proof}

\printbibliography


\end{document}